\numberwithin{equation}{section}
\theoremstyle{plain}
\newtheorem{thm}{Theorem}[section]
\newtheorem{cor}[thm]{Corollary}
\newtheorem{lemma}[thm]{Lemma}
\newtheorem{prop}[thm]{Proposition}
\theoremstyle{definition}
\newtheorem*{definition}{Definition}
\newtheorem{remark}[thm]{Remark}
\newtheorem{example}[thm]{Example}
\def\i{{\rm i}}
\def\ch{\hbox{\rm ch}}
\def\o{{\mathfrak o}}
\def\su{{\mathfrak su}}
\def\Hom{{\rm Hom}}
\def\End{{\rm End}}
\def\Sp{{\rm Sp}}
\def\SU{{\rm SU}}
\def\PU{{\rm PU}}
\def\SO{{\rm SO}}
\def\Spin{{\rm Spin}}
\def\Spinc{{\rm Spin}^c}
\def\U{{\rm U}}
\def\O{{\rm O}}
\def\rK{\widetilde K}
\def\KO{K{\rm O}}
\def\rKO{\widetilde K{\rm O}}
\def\KSp{K{\rm Sp}}
\def\K{K}
\def\Zz{{\mathbb Z}}
\def\Rr{{\mathbb R}}
\def\Cc{{\mathbb C}}
\def\Qq{{\mathbb Q}}
\def\Hh{{\mathbb H}}
\def\Hl{{\mathbb H}_{\lambda}}
\def\Tt{{\mathbb T}}
\def\Ff{{\mathbb F}}
\def\e{{\rm e}}
\def\st{\mid}
\def\ind{\operatorname{ind}}
\def\bar#1{\overline{#1}}
\def\mod#1{(\hbox{\rm mod}\, {#1})}
\begin{document}

\title{Obstruction theory on $8$-manifolds}

\author[M.~\v Cadek, M.~C.~Crabb and J. Van\v zura]{MARTIN \v CADEK, MICHAEL
CRABB and JI\v R\' I VAN\v ZURA}


\address{\newline Department of Mathematics, Masaryk University,
Jan\' a\v ckovo n\' am. 2a, 602 00 Brno, Czech Republic}
\email{cadek@math.muni.cz}

\address{\newline Department of Mathematical Sciences, University
of Aberdeen, Aberdeen AB24 3UE, U.K.}
\email{m.crabb@maths.abdn.ac.uk}

\address{\newline Academy of Sciences of the Czech Republic, Institute
of Mathematics, \v Zi\v zkova 22,
616 62 Brno, Czech Republic}
\email{vanzura@ipm.cz}

\date {October 3rd, 2007}

\thanks{Research of the first author supported by the grant
MSM 0021622409 of the Czech Ministry of Education. Research of the third
author supported by the Academy of Sciences
of the Czech Republic, Institutional Research Plan AVOZ10190503, and by the
grant 201/05/2117 of the Grant Agency of the Czech Republic.}

\abstract{This note gives a uniform, self-contained, 
and fairly direct 
approach to a variety of obstruction-theoretic problems on $8$-manifolds.
We give necessary and sufficient cohomological criteria for
the existence of almost complex and almost quaternionic structures
on the tangent bundle and for the reduction of the structure
group to $\U (3)$ by the homomorphism $\U (3) \to \O (8)$ given
by the Lie algebra representation of $\PU (3)$.
}
\endabstract

\maketitle

\section{Introduction}
Let $M$ be a connected, closed, smooth, $8$-dimensional, spin${}^c$ manifold.
We consider, for various compact Lie groups $G$ and
homomorphisms $\rho : G\to \SO (8)$,
the problem of reducing the structure group of the oriented 
tangent bundle, $\tau M$, of $M$ from $\SO (8)$, via $\rho$, to $G$.
In each case we shall obtain, in terms of the cohomology of $M$
and cohomology characteristic classes of $\tau M$,
necessary and sufficient conditions for the reduction.
Amongst the homomorphisms $\rho$ that we examine are:
\begin{enumerate}
\item[(i)]
the standard inclusion $\U (4)\subseteq \SO (8)$;
\item[(ii)]
the homomorphism $\U (3) \to \SO (8)$ determined by the adjoint representation 
of the quotient $\PU (3)$ of $\U (3)$ by its centre;
\item[(iii)]
the standard inclusion $\Sp (2)\subseteq \SO (8)$;
\item[(iv)]
the composition of the double cover and the standard inclusion 
$Sp(2)\times Sp(1)\to Sp(2)\times_{\{\pm 1\}}Sp(1)\subseteq SO(8)$;
\item[(v)]
the composition of the double cover and the standard inclusion
$\Spin (4)\to \SO (4)\subseteq \SO (8)$.
\end{enumerate}
\par\smallskip
The results in cases (i), (iii) and (iv), for the existence
of almost complex and almost quaternionic structures on $M$,
were first obtained, using other methods, 
by Heaps \cite{TH} and by \v Cadek and Van\v zura \cite{CV2}, \cite{CV3},
\cite{CV4}, \cite{CV6}. The result in case (ii) on reduction
to $\U (3)$ through the adjoint representation of $\PU (3)$ was obtained
by Crabb in response to a question of N.J.~Hitchin and F.~Witt; 
see \cite{FW}. 
Conditions for existence of a $4$-field, (v), are also due to
\v Cadek and Van\v zura \cite{CV5}.

More generally, we look at the reduction of the structure group
of any $8$-dimensional oriented real vector bundle $\xi$ over $M$
admitting a spin${}^c$-structure (that is, for which the Stiefel-Whitney
class $w_2(\xi )$ is the reduction of an integral class).
We fix a spin${}^c$ structure on $M$,
with characteristic class $c\in H^2(M;\, \Zz )$ (lifting $w_2(M)$),
and an orientation for the vector bundle $\xi$.

It is, at first sight, somewhat surprising that one should be able to
obtain purely cohomological criteria for reduction of the structure group.
However, there are certain special features of the problem that make
its solution tractable (and, even, rather easy).

\smallskip
\par\noindent {\bf 1.1}.
Two oriented $8$-dimensional real vector bundles $\xi$ and $\xi'$ over $M$
such that $[\xi ]=[\xi']\in\KO^0(M)$
are isomorphic as oriented bundles if and only if $e(\xi )[M]=e(\xi')[M]
\in\Zz$.

This reduces the obstruction theory to a $\KO$-theoretic, stable problem.

\smallskip
\par\noindent {\bf 1.2}.
The spin${}^c$ structure for $M$
allows us to split off the `top cell' in real and complex
$K$-theory. Let $B\subseteq M$ be an embedded open disc of dimension $8$,
that is, a tubular neighbourhood of a point. 
Then we have short exact sequences:
$$
\begin{matrix}
0&\to&\Zz = \KO^0(M,M-B)&\xrightarrow[\leftarrow - -]{}
&\KO ^0(M) &\to &\KO^0 (M-B)&\to &0\\
&&\operatorname{id}\,\,\downarrow\,\,\cong\qquad\,\,\,  & & \downarrow && \downarrow && \\
0&\to &\Zz = \K^0(M,M-B)&\xrightarrow[\leftarrow - -]{}
&\K ^0(M) &\to &\K^0 (M-B)&\to &0
\end{matrix}
$$
split by the spin${}^c$ index:
$$
\ind : \K^0(M) \to \Zz .
$$
Note that $\KO^1(M,M-B)$ and $\K^1(M,M-B)$ are zero. 
Thus the splitting of $K^0(M)$ induces a splitting
of $\KO^0(M)$ as a direct sum $\KO^0(M-B)\oplus\Zz$, and a real
vector bundle $\xi$ over $M$ is determined stably by its restriction to
$M-B$ and the image of $[\xi ]\in\KO^0(M)$ under the spin${}^c$ index.

This leaves us with an obstruction theory problem on the essentially
$7$-dimensional space $M-B$.

\smallskip
\par\noindent {\bf 1.3}.
In a cell decomposition of $M$ as a finite complex
(in which $B$ is an open cell) the restriction map
$$
\KO^0(M-B) \to \KO^0(M^{(4)})
$$
to the $4$-skeleton is injective.
For an argument using the Atiyah-Hirzebruch spectral sequence shows
that the restriction map $\KO^0(M-B)\to\KO^0(M^{(7)})$
is an isomorphism, since $H^8(M-B;\,\Zz )=0$ and
$\KO^1(M-B,M^{(7)})=0$,
and the restriction $\KO^0(M^{(7)})\to\KO^0(M^{(4)})$
is injective, because 
$\KO^0(M^{(5)},M^{(4)})=0$, $\KO^0(M^{(6)},M^{(5)})=0$ and
$\KO^0(M^{(7)},M^{(6)})=0$. 

This means that we can distinguish stable bundles over $M-B$ by 
calculations in dimension $4$ and below.

\smallskip
Conditions for the existence of a complex structure on the vector bundle
$\xi$ are derived in Section 4 and stated as Proposition \ref{4A}.
The result on the reduction of the structure group from $\SO (8)$ to
$\U (3)$ is given as Proposition \ref{6A}.
Sections 7, 8 and 10 deal with almost quaternionic structures; 
the main results appear as
Propositions \ref{8B}, \ref{9A} and \ref{13C}.
Reduction to $\Spin (3)$ and to $\Spin (4)$ is considered in Section 9.

Almost fifty years ago
Hirzebruch and Hopf \cite{FH-HH} investigated the reduction
of the structure group of an oriented $4$-manifold
from $\SO (4)$ to $\SO (2)\times \SO (2)$
and to $\U (2)$.
This paper follows in the tradition of their work.

\section{The spin characteristic class in dimension $4$}
In this section we recall some fairly standard facts about 
the spin characteristic class in dimension $4$ and the classification
of spin and spin${}^c$ bundles in low dimensions.
We shall write $\rho_2 : H^*(M;\, \Zz ) \to H^*(M;\,\Ff_2)$ for reduction
(mod $2$). 

An elementary computation shows that
the Chern classes $c_2$ and $c_3$ of a complex vector bundle
with $c_1=0$ satisfy:
\begin{equation}\label{2A}
{\rm Sq}^2 (\rho_2 c_2) =\rho_2 c_3.
\end{equation}
Let $F$ be the homotopy fibre of
$$
{\rm Sq}^2 \circ\rho_2 + \rho_2: K(\Zz ,4)\times K(\Zz ,6) \to
K(\Ff_2,6) .
$$
By (\ref{2A}), we can lift the map 
$(c_2,c_3): B\SU (\infty ) \to K(\Zz ,4)\times K(\Zz ,6)$
to a map $B\SU (\infty )\to F$.
\begin{lemma}\label{2B}
The map $B\SU (\infty ) \to F$ above induces an isomorphism
in homotopy groups $\pi_i (B\SU (\infty )) \to \pi_i(F)$ for
$i\leq 7$ (and a surjection for $i=8$).
\end{lemma}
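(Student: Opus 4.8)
The plan is to compute the homotopy groups of both $B\SU(\infty)$ and $F$ through dimension $8$ and then to check that the given lift realises the comparison; everything reduces to what happens in degrees $4$ and $6$. On the source side, $\pi_i(B\SU(\infty)) = \pi_{i-1}(\SU(\infty))$ is, by Bott periodicity, equal to $\Zz$ for $i = 4, 6, 8$ and to $0$ for the remaining $i$ with $1 \le i \le 8$. On the target side I would feed the defining fibration
$$F \to K(\Zz, 4) \times K(\Zz, 6) \xrightarrow{{\rm Sq}^2 \rho_2 + \rho_2} K(\Ff_2, 6)$$
into its long exact homotopy sequence. Writing $E = K(\Zz,4)\times K(\Zz,6)$ and $B = K(\Ff_2,6)$, the only nonzero homotopy groups in range are $\pi_4(E) = \pi_6(E) = \Zz$ and $\pi_6(B) = \Ff_2$, and the map $\pi_6(E) \to \pi_6(B)$ is reduction mod $2$ (the ${\rm Sq}^2 \rho_2$ summand contributes nothing in degree $6$, since it is pulled back from the degree-$4$ class, which dies on $S^6$).

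It follows that $\pi_4(F) = \Zz$, that $\pi_6(F) = \ker(\rho_2 : \Zz \to \Ff_2) = 2\Zz \cong \Zz$ with $\pi_6(F) \to \pi_6(E)$ injective onto $2\Zz$, and that $\pi_i(F) = 0$ for all other $i \le 8$; in particular $\pi_7(F) = \pi_8(F) = 0$. Comparing the two lists, the groups already agree abstractly for $i \le 7$, and the asserted surjectivity at $i = 8$ is automatic because $\pi_8(F) = 0$. It then remains to see that the lift $\ell : B\SU(\infty) \to F$ induces isomorphisms in degrees $4$ and $6$ (the other degrees $\le 7$ being zero on both sides). Composing $\ell$ with $F \to E$ recovers $(c_2, c_3)$, and since $\pi_4(F) \to \pi_4(E)$ is bijective and $\pi_6(F) \to \pi_6(E)$ is injective, $\ell_*$ in these degrees is determined by $(c_2,c_3)_*$. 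On $\pi_4$ the class $c_3$ vanishes and $c_2$ sends a generator of $\pi_4(B\SU(\infty)) = \rK^0(S^4)$ to $\pm 1$, so $\ell_*$ is an isomorphism on $\pi_4$.

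The crux, and the only point needing care, is the degree-$6$ matching of factors of $2$. On $\pi_6$ the class $c_2$ vanishes, so $\ell_*$ is governed by $c_3 : \pi_6(B\SU(\infty)) \to \pi_6(E) = \Zz$; and by the standard Chern-character computation the top Chern class of a generator of $\rK^0(S^{2n})$ is $\pm(n-1)!$, whence $c_2$ gives $\pm 1$ on $\pi_4$ but $c_3$ gives $\pm 2$ on $\pi_6$. This divisibility of $c_3$ by $2$ is exactly compensated by the fact that $\pi_6(F)$ is the index-$2$ subgroup $2\Zz$ cut out by the mod-$2$ relation \eqref{2A}: $c_3$ carries a generator of $\pi_6(B\SU(\infty))$ to $\pm 2$, a generator of $\pi_6(F) \cong 2\Zz$. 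Hence $\ell_*$ is an isomorphism on $\pi_6$ as well, which establishes the isomorphism for all $i \le 7$ and, together with $\pi_8(F) = 0$, completes the proof.
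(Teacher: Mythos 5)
Your proof is correct and takes essentially the same approach as the paper, whose own (very terse) argument consists precisely of the computation $\pi_4(F)=\Zz$, $\pi_6(F)=2\Zz$, $\pi_i(F)=0$ otherwise, together with the facts that $c_2:\rK^0(S^4)\to H^4(S^4;\,\Zz)$ is an isomorphism and $c_3:\rK^0(S^6)\to H^6(S^6;\,\Zz)$ is multiplication by $2$. You have merely made explicit the details the paper suppresses, namely the long exact homotopy sequence of the fibration defining $F$ and the divisibility $\pm(n-1)!$ of the top Chern class on $\rK^0(S^{2n})$.
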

\begin{proof}
We have $\pi_4(F)=\Zz$, $\pi_6(F)=2\Zz$, and $\pi_i(F)=0$ otherwise.
The Chern class $c_2 : \rK^0(S^4)=\Zz \to H^4(S^4;\, \Zz )=\Zz$ is an
isomorphism, and $c_3 :\rK^0(S^6)=\Zz\to H^6(S^6;\,\Zz )=\Zz$ is 
multiplication by $2$.
\end{proof}

\smallskip
Recall that $M$ is a closed, connected, $8$-dimensional, spin${}^c$-manifold
and that $B\subseteq M$ is an open disc.
The complex $K$-group $\K^0(M-B)=[(M-B)_+;\, \Zz \times B\U (\infty )]$
splits as a direct sum
$$
K^0(M-B)  =\Zz \oplus H^2(M;\, \Zz )\oplus [(M-B)_+;\, B\SU (\infty )],
$$
where the summand $\Zz =[(M-B)_+;\, \Zz ]$, 
corresponding to the trivial bundles, is given by the dimension and the summand
$H^2(M;\, \Zz )=[(M-B)_+;\, B\U (1)]$, 
corresponding to the line bundles, is given by $c_1$.
(The subscript $+$ denotes a disjoint basepoint, and the brackets $[-;-]$
indicate the set of pointed homotopy classes.)

According to Lemma \ref{2B}, 
the map $[(M-B)_+;\, B\SU (\infty )]\to [(M-B)_+;\, F]$
is surjective (and, in fact, bijective). 
This completes the description of the Chern classes of
$\SU$-bundles over $M-B$. (See \cite{LMW} for a similar description
in the case of real vector bundles.)
\begin{prop}
The image of the mapping
$$
(c_2,c_3) : [(M-B)_+;\, B\SU (\infty ) ] 
\to H^4(M;\,\Zz )\oplus H^6(M;\, \Zz )
$$
is the set $\{ (u,v) \st {\rm Sq}^2\rho_2(u) =\rho_2(v)\}$.
\end{prop}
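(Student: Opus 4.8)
The plan is to deduce the statement from the description of $[(M-B)_+;\, F]$, where $F$ is the homotopy fibre introduced just before Lemma \ref{2B}. The inclusion of the image \emph{into} the set $\{(u,v) \st {\rm Sq}^2\rho_2(u) = \rho_2(v)\}$ is immediate from (\ref{2A}): a map $(M-B)_+ \to B\SU (\infty )$ classifies (stably) an $\SU$-bundle, so its Chern classes have $c_1 = 0$ and hence satisfy ${\rm Sq}^2\rho_2(c_2) = \rho_2(c_3)$. The real content is surjectivity of $(c_2,c_3)$ onto this set.

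Since $c_2$ and $c_3$ factor through $F$ by the very construction of the lift $B\SU (\infty )\to F$, and since the map $[(M-B)_+;\, B\SU (\infty )]\to [(M-B)_+;\, F]$ is surjective by the remark following Lemma \ref{2B}, it suffices to prove that $(c_2,c_3) : [(M-B)_+;\, F] \to H^4(M;\,\Zz )\oplus H^6(M;\, \Zz )$ hits the whole set. First I would record that $H^i(M-B;\,\Zz )\cong H^i(M;\,\Zz )$ (and likewise mod $2$) for $i\leq 6$, because $H^*(M,M-B)$ is concentrated in degree $8$; this justifies phrasing the target in terms of $H^*(M)$ and, in particular, identifies $H^6((M-B)_+;\,\Ff_2)$ with $H^6(M;\,\Ff_2)$.

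Now I would invoke the fibration sequence $K(\Ff_2, 5) \to F \xrightarrow{p} K(\Zz ,4)\times K(\Zz ,6) \xrightarrow{g} K(\Ff_2,6)$ defining $F$, where $g = {\rm Sq}^2\circ\rho_2 + \rho_2$. A pair $(u,v)$ in $H^4(M;\,\Zz )\oplus H^6(M;\,\Zz )$ is the same datum as a map $f : (M-B)_+ \to K(\Zz ,4)\times K(\Zz ,6)$, and $f$ lifts through $p$ to $F$ if and only if the composite $g\circ f$ is null-homotopic. As $g\circ f$ is a map into an Eilenberg--MacLane space, it is null-homotopic exactly when the class it represents, namely ${\rm Sq}^2\rho_2(u) + \rho_2(v)\in H^6(M;\,\Ff_2)$, vanishes. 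Hence $(u,v)$ lies in the image of $(c_2,c_3)$ on $[(M-B)_+;\, F]$ precisely when ${\rm Sq}^2\rho_2(u) = \rho_2(v)$, which together with the surjectivity onto $[(M-B)_+;\, F]$ completes the argument.

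The step I expect to require the most care is the lifting criterion: one must be sure that the existence of a lift to $F$ is governed by a single primary obstruction, with no secondary phenomena. This is clean here precisely because $F$ is \emph{by definition} the homotopy fibre of $g$, so a lift of $f$ is the same thing as a null-homotopy of $g\circ f$, and for maps into $K(\Ff_2,6)$ such a null-homotopy exists if and only if the represented cohomology class is zero. There are therefore no higher obstructions to analyse, and the remaining points (the cohomology identifications for $M-B$, and the fact that $\rho_2$ and restriction commute) are routine.
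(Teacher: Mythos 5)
Your argument is correct and is essentially the paper's own: the paper proves this proposition by exactly the route you take, namely the surjectivity of $[(M-B)_+;\, B\SU (\infty )]\to [(M-B)_+;\, F]$ from Lemma \ref{2B} combined with the fibration lifting criterion for the homotopy fibre $F$, which the paper leaves implicit and you spell out. Your supplementary checks (that $H^i(M-B;\,\Zz )\cong H^i(M;\,\Zz )$ for $i\leq 6$, and that a lift to $F$ is equivalent to the vanishing of the single class ${\rm Sq}^2\rho_2(u)+\rho_2(v)$ with no higher obstructions) are accurate fillings-in of the same proof.
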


By adding (or subtracting) a complex line bundle with $c_1=l$ one
obtains the following generalization.
\begin{cor}\label{2C}
The image of the mapping
$$
(c_1,c_2,c_3) : [(M-B)_+;\, B\U (\infty ) ] 
\to H^2(M;\,\Zz )\oplus H^4(M;\,\Zz )\oplus H^6(M;\, \Zz )
$$
is the set $\{ (l,u,v) \st {\rm Sq}^2\rho_2(u)+\rho_2(lu) =\rho_2(v)\}$.
\end{cor}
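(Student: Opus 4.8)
The plan is to deduce the Corollary from the preceding Proposition by comparing an arbitrary $\U$-bundle with a suitable $\SU$-bundle, exactly as the hint suggests. Fix a class $l\in H^2(M;\,\Zz )=[(M-B)_+;\, B\U (1)]$ and let $L$ be the complex line bundle over $M-B$ with $c_1(L)=l$. Using the direct sum decomposition
$$
\K^0(M-B)=\Zz \oplus H^2(M;\,\Zz )\oplus [(M-B)_+;\, B\SU (\infty )],
$$
any stable $\U$-bundle $\eta$ with $c_1(\eta )=l$ can be written in $\K^0(M-B)$, modulo the trivial summand $\Zz$, as $[\xi ]+[L]$, where $\xi$ is the $\SU$-bundle representing the image of $[\eta ]$ in the third summand; conversely every such $\xi$ arises in this way. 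Since trivial bundles contribute nothing to the total Chern class, I would record that $c(\eta )=c(\xi )\,c(L)$.

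Next I would expand this product. Writing $u_0=c_2(\xi )$ and $v_0=c_3(\xi )$, and using $c_1(\xi )=0$ together with $c(L)=1+l$, the Whitney formula gives
$$
c_1(\eta )=l,\qquad c_2(\eta )=u_0,\qquad c_3(\eta )=v_0+l\,u_0 .
$$
Thus the map $(c_1,c_2,c_3)$ on $\U$-bundles with $c_1=l$ is obtained from $(c_2,c_3)$ on $\SU$-bundles by the invertible substitution $(u_0,v_0)\mapsto (u,v)=(u_0,\, v_0+l\,u_0)$, whose inverse is $u_0=u$, $v_0=v-l\,u$.

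Finally I would substitute into the constraint supplied by the Proposition. A pair $(u,v)$ occurs as $(c_2(\eta ),c_3(\eta ))$ for some $\eta$ with $c_1(\eta )=l$ precisely when the corresponding pair $(u_0,v_0)=(u,\, v-l\,u)$ lies in the image for $\SU$-bundles, i.e. when ${\rm Sq}^2\rho_2(u)=\rho_2(v-lu)$. Reducing mod $2$, where signs are irrelevant, this reads ${\rm Sq}^2\rho_2(u)+\rho_2(lu)=\rho_2(v)$, which is exactly the asserted description of the image. The argument is entirely routine; the only point requiring a little care is the bookkeeping in the Whitney expansion and the passage to mod-$2$ coefficients, so as to confirm that the cross term $l\,u_0$ lands as $\rho_2(lu)$ on the correct side of the equation.
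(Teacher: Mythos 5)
Your proof is correct and is essentially the paper's own argument: the paper dispatches the corollary in one line, ``by adding (or subtracting) a complex line bundle with $c_1=l$'' to an $\SU(\infty)$-bundle, which is exactly the invertible substitution $(u_0,v_0)\mapsto (u_0,\,v_0+lu_0)$ that you make explicit via the Whitney formula and the splitting of $\K^0(M-B)$. Your write-up merely fills in the bookkeeping (and the harmless mod-$2$ sign) that the paper leaves implicit, so there is nothing to correct.
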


\begin{lemma}
The inclusion map
$$
\SU (\infty ) \to \Spin (\infty ).
$$
induces an isomorphism $\pi_i(B\SU (\infty )) \to \pi_i(B\Spin (\infty ))$
for $i\leq 5$.
\end{lemma}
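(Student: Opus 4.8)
The plan is to compute the homotopy groups on both sides by Bott periodicity and reduce the assertion to a single isomorphism in degree $4$. Writing $\pi_i(BG) \cong \pi_{i-1}(G)$, the group $\SU(\infty)$ is $2$-connected with $\pi_3(\SU(\infty)) = \Zz$ and $\pi_4(\SU(\infty)) = 0$, while $\Spin(\infty)$ is $2$-connected with $\pi_3(\Spin(\infty)) = \pi_3(\SO(\infty)) = \Zz$ and $\pi_4(\Spin(\infty)) = 0$ (the covering $\Spin(\infty) \to \SO(\infty)$ being an isomorphism on $\pi_j$ for $j \ge 2$). Hence, for $i \le 5$, both $\pi_i(B\SU(\infty))$ and $\pi_i(B\Spin(\infty))$ vanish except when $i = 4$, in which case each is $\Zz$. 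For $i \ne 4$ in this range there is nothing to prove, and the lemma comes down to showing that the induced homomorphism $\pi_4(B\SU(\infty)) \to \pi_4(B\Spin(\infty))$, a map $\Zz \to \Zz$, is an isomorphism.

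To identify this map I would evaluate characteristic classes on a generator, restricting everything to $S^4$. A generator of $\pi_4(B\SU(\infty)) \cong \Zz$ is given by a stable $\SU$-bundle $E$ with $c_2(E) = 1$, since $c_2$ furnishes the isomorphism $\pi_4(B\SU(\infty)) \cong H^4(S^4;\Zz) = \Zz$; and a generator of $\pi_4(B\Spin(\infty)) \cong \Zz$ is detected by the spin characteristic class $\frac{1}{2}p_1$, which takes the value $\pm 1$ there. The map of the lemma is realification followed by the canonical $\Spin$-lift, the lift being available because $w_2$ of the realification of a complex bundle is $\rho_2 c_1$, which vanishes on an $\SU$-bundle. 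Since $p_1(E_\Rr) = c_1(E)^2 - 2c_2(E)$, an $\SU$-bundle realifies to a bundle with $\frac{1}{2}p_1(E_\Rr) = -c_2(E)$; thus the generator of $\pi_4(B\SU(\infty))$ maps to an element of $\pi_4(B\Spin(\infty))$ with $\frac{1}{2}p_1 = -1$, i.e.\ to a generator. This proves the map is an isomorphism in degree $4$, and with it the lemma.

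The step requiring care — and the main obstacle — is the normalization asserted above: that it is $\frac{1}{2}p_1$, and not $p_1$ itself, which generates $H^4(B\Spin(\infty);\Zz) \cong \Zz$ and so detects $\pi_4(B\Spin(\infty))$ as $\pm 1$. This is the integrality of the spin characteristic class discussed in Section 2. It can be pinned down concretely through the exceptional isomorphism $\Spin(3) \cong \SU(2)$: realifying the standard $2$-dimensional representation embeds $\SU(2) = \Spin(3)$ into $\Spin(4) = \Sp(1)\times\Sp(1)$ as one factor, and tracing this element through the fibrations $\Spin(4) \to \Spin(5) \to S^4$ and the stabilization $\Spin(5) \to \Spin(\infty)$ shows that the realified generator is a stable generator carrying $\frac{1}{2}p_1 = -1$. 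One should note the contrast with the geometric inclusion $\SO(3) \subseteq \SO(\infty)$, which is multiplication by $2$ on $\pi_3$; it is essential here that the map of the lemma is the realification of the complex representation.
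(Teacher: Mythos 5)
Your argument is correct, but it takes a genuinely different route from the paper's. The paper disposes of the lemma in one line of $K$-theory: after observing, as you do, that for $i\leq 5$ the only nontrivial homotopy group on either side is $\pi_4=\Zz$, it identifies the map in that degree with the restriction-of-scalars map $\K^{-4}(*)\to\KO^{-4}(*)$ and quotes the Bott periodicity fact that this is an isomorphism $\Zz\to\Zz$. You instead unpack what that citation conceals: you detect $\pi_4(B\SU(\infty))$ by $c_2$ and $\pi_4(B\Spin(\infty))$ by $q_1=\frac{1}{2}p_1$, use $p_1(E_\Rr)=c_1^2-2c_2$ to send generator to generator, and normalize $q_1$ on the stable generator through the exceptional isomorphism $\Spin(3)\cong\SU(2)$, the inclusion of one $\Sp(1)$-factor of $\Spin(4)$, and the fibration $\Spin(4)\to\Spin(5)\to S^4$ --- in effect the classical quaternionic Hopf bundle computation showing the stable generator carries $p_1=\pm 2$. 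Two small points would tighten this. First, your final paragraph is self-undermining in a harmless way: once you know the image of the $\pi_3(\SU(2))$-generator already stabilizes to a generator of $\pi_3(\Spin(\infty))$, the lemma follows at once, and the characteristic-class bookkeeping of your second paragraph is redundant; it is the normalization argument that carries the whole proof. Second, you should record why evaluation of $q_1$ detects $\pi_4(B\Spin(\infty))$ faithfully: $B\Spin(\infty)$ is $3$-connected, so the Hurewicz map gives $\pi_4\cong H_4\cong\Zz$ and the generator of $H^4(B\Spin(\infty);\,\Zz)$ pairs to $\pm 1$ with it. As for what each approach buys: the paper's is shorter and consistent with its systematic use of $\K$- and $\KO$-indices throughout, while yours is more elementary and self-contained, proving rather than quoting the fact that realification $\K^{-4}(*)\to\KO^{-4}(*)$ is an isomorphism --- the same normalization $q_1 = \pm 1$ on the generator of $\rKO^0(S^4)$ that underlies the paper's definition of the spin class $q_1$; and your closing contrast with the inclusion $\SO(3)\subseteq\SO(\infty)$, which is multiplication by $2$ on $\pi_3$, correctly flags the standard pitfall that makes the realification map, not the subgroup inclusion, the relevant one here.
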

\begin{proof}
 For restriction from $\Cc$ to $\Rr$ gives an isomorphism
$\K^{-4}(*)=\Zz \to \KO^{-4}(*)=\Zz$;
the homotopy groups are zero for $i<4$ and for $i=5$.
\end{proof}
\begin{definition}
The spin characteristic class in $H^4(B\Spin (\infty );\,\Zz )$
corresponding to $-c_2\in H^4(B\SU (\infty );\, \Zz )$
is denoted by $q_1$.
For an oriented real vector bundle $\xi$ admitting a spin structure,
that is,  such that $w_2(\xi )=0$, the characteristic class $q_1$ is,
as we show below, 
independent of the choice of spin structure and we 
write it as $q_1(\xi )\in H^4(M;\,\Zz )$.
\end{definition}

To see the independence we may argue as follows.
Over the $5$-skeleton there is a complex vector bundle $\zeta$ with
$c_1(\zeta )=0$ that is isomorphic to $\xi\, |\, M^{(5)}$ as a real bundle. 
We must verify that $c_2(\zeta )$ does not depend on the choice of 
an $\SU$-structure.
If $\zeta'$ is another such bundle, then $\zeta -\zeta'$ is stably
trivial as a real bundle and so,
considered as a map $M^{(5)}\to B\SU$, lifts to $\SO /\SU$.
We have to show that $c_2(\zeta -\zeta')=0$. But $\rho_2 :H^4(\SO /\SU ;\, \Zz )
\to H^4(\SO /\SU;\, \Ff_2)=\Ff_2$ is an isomorphism and
$\rho_2(c_2(\zeta -\zeta '))= w_4(\zeta -\zeta')=0$.. 

From the definition it is immediate that, for
a spin bundle $\xi$, we have $2q_1(\xi ) = p_1(\xi )$ and 
$\rho_2(q_1(\xi ))=w_4(\xi )$.
Note also that $q_1$ is additive:
$q_1(\xi\oplus\xi')=q_1(\xi )+q_1(\xi ')$ for two bundles $\xi$ and $\xi'$
admitting spin structures.
A complex vector bundle $\zeta$ admits a spin structure if and only if
$c_1(\zeta )$ is divisible by $2$, say equal to $2m$ and then
$q_1(\zeta )=2m^2-c_2(\zeta )$ (which does not depend on the choice of $m$).

We next extend this definition formally to a characteristic class for
spin${}^c$-bundles.
Let $\xi$ now be a real vector bundle admitting a spin${}^c$ structure.
Thus $w_2(\xi )$ is the reduction (mod $2$) $\rho_2(l)$ of an integral
class $l\in H^2(M;\,\Zz )$.
\begin{definition}
For an orientable vector bundle $\xi$ 
and class $l\in H^2(M;\,\Zz )$ such that
$\rho_2(l)=w_2(\xi )$ we define 
$$
q_1(\xi ;\, l) =q_1(\xi-\lambda )\in H^4(M;\, \Zz ),
$$
where $\lambda$ is a complex line bundle with $c_1(\lambda )=l$.
\end{definition}

It is elementary to verify that:
\begin{align*}
2q_1(\xi ;\, l)&=p_1(\xi )  - l^2;\qquad
\rho_2(q_1(\xi ;\, l))=w_4(\xi );\\
q_1(\xi ;\, l+2m) &=q_1(\xi ;\, l)-2lm-2m^2
\hbox{\rm \  for $m\in H^2(M;\, \Zz )$,}
\end{align*}
and, if $\zeta$ is a complex bundle with $c_1(\zeta )=l$, then
$q_1(\zeta ;\, l) = - c_2(\zeta )$.

This invariant is enough to distinguish oriented vector bundles over $M-B$.
\begin{prop}\label{2D}
Consider two oriented (stable) vector bundles $\xi$ and $\xi'$
over $M-B$ with $w_2(\xi )=w_2(\xi')=\rho_2(l)$, where
$l\in H^2(M;\, \Zz )$.
Then $\xi$ and $\xi'$ are stably isomorphic if and only if
$q_1(\xi ;\, l)=q_1(\xi';\, l)$.
\end{prop}
\begin{proof}
For $\xi -\xi'$ is a spin bundle with $q_1(\xi -\xi')=0$. So
$\xi -\xi'$ is trivial over $M^{(4)}$, and hence, by (1.3),
over $M-B$. 
\end{proof}
\begin{remark}
With the benefit of hindsight it is now easy to rederive the main results 
of Hirzebruch and Hopf \cite{FH-HH}.
Let $\eta$ be an oriented $4$-dimensional real vector bundle with
$w_2(\eta )$ the reduction of an integral class
over a closed, connected, oriented $4$-manifold $N$.
Then $\eta$ is isomorphic to a direct sum $\mu_1\oplus\mu_2$ of
oriented $2$-dimensional bundles with $e(\mu_i)=m_i$
if and only if
$w_2(\eta )=\rho_2(l)$, where $l=m_1+m_2$,
$e(\eta )=m_1m_2$ and
$p_1(\eta ) = m_1^2 +m_2^2$ ($=l^2-2m_1m_2$).
For we have only to compare the classes $q_1(\eta ;\, l)$ with
$q_1(\mu_1\oplus\mu_2;\, l)$ and $e(\eta )$ with $e(\mu_1\oplus \mu_2)$.
Similarly, $\eta$ has a complex structure (compatible with its given
orientation) with $c_1=l$ if and only if $w_2(\eta )=\rho_2(l)$
and $p_1(\eta )+2e(\eta )=l^2$.
\end{remark}

\section{The spin${}^c$ index}
We turn next to the cohomological description of the fundamental splitting
(1.2).
The spin${}^c$ index is given by
\begin{equation}\label{3A}
y\in \K^0(M)\,\mapsto\, 
\ind\, y = (\e^{c/2}\hat{\mathcal A}(\tau M)\hbox{\rm ch}(y))[M]\in\Zz .
\end{equation}
Here the Chern character, ${\rm ch}(y)$, of $y$ has the explicit
expansion:
\begin{equation}\label{3B}
\begin{aligned}
\hbox{\rm ch}(y)&=\dim\, y+ c_1 + (c_1^2-2c_2)/2 + (c_1^3-3c_1c_2+3c_3)/6+
\\&\qquad (c_1^4-4c_1^2c_2+4c_1c_3+2c_2^2-4c_4)/24 + \ldots ,
\end{aligned}
\end{equation}
where the $c_i$ are the Chern classes of $y$, and
\begin{equation*}
\hat{\mathcal A}(\tau M) = 1 - p_1(\tau M)/24 + 
(-4p_2(\tau M) +7p_1^2(\tau M))/5760 +\cdots .
\end{equation*}
The Chern character of the complexification of a real class $x\in \KO ^0(M)$ 
is written in terms of the Pontrjagin classes $p_i$ of $x$ as:
\begin{equation}\label{3D}
\dim\, x+ p_1 + (p_1^2-2p_2)/12 + \ldots ;
\end{equation}
for $p_1(x)=-c_2(y)$ and $p_2(x)=c_4(y)$, where $y$ is the complexification
of $x$.

\begin{remark}\label{3C}
From the Universal Coefficient theorem and Poincar\'e duality for
the oriented $8$-dimensional manifold $M$ one obtains a short exact sequence
$$
0 \to T(H^2(M;\,\Zz ))\otimes\Ff_2 \to H^2(M;\,\Ff_2)
\to \Hom (H^6(M;\, \Zz ),\Ff_2) \to 0, 
$$
in which $T$ denotes the torsion subgroup of an abelian group.

When  $w_2(M)\cdot H^6(M;\,\Zz )=0$,
or equivalently, by the Wu formula, when
${\rm Sq}^2\rho_2 H^6(M;\, \Zz )$ $=0$, we may choose 
a spin${}^c$-structure for $M$ for which the characteristic class
$c$ is torsion and so disappears from the rational cohomology formulae. 

On the other hand, if $w_2(M)\cdot H^6(M;\,\Zz )\not=0$, then there
is, for any chosen class $c$, an element $t\in H^6(M;\, \Zz )$
such that $(c\cdot t)[M]\in\Zz$ is odd.
\end{remark}

We can use the splitting to extend the description (\ref{2C}) of the Chern
classes of complex vector bundles over $M-B$ to bundles over $M$.
\begin{prop}\label{3E}
Consider cohomology classes
$l\in H^2(M;\, \Zz )$, 
$u\in H^4(M;\, \Zz )$, $v\in H^6(M;\, \Zz )$, $w\in H^8(M;\, \Zz )$.
According to {\rm (\ref{2C})},
there is a $4$-dimensional complex vector bundle $\zeta$ over
$M-B$ with $c_1(\zeta )=l$, $c_2(\zeta )=u$ and
$c_3(\zeta )=v$
if and only if ${\rm Sq}^2 \rho_2u =\rho_2 (v+lu)$.
In that case 
$$
\frac{1}{2}(u(u+q_1(\tau M;\, c) -2l^2 -3cl -c^2) +(2l +3c)v)[M] \in\Qq
$$
is an integer, $I$ say,
and $I\, (\text{\rm mod}\, 6)$ is independent of the choice of $c$.
Such a bundle extends over $M$ to a complex vector bundle $\zeta$ with
$c_4(\zeta )=w$
if and only if 
$$
w[M] \equiv I\, (\hbox{\rm mod}\, 6).
$$
\end{prop}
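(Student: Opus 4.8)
The plan is to reduce the whole statement to the integrality of the spin${}^c$ index (\ref{3A}) combined with the behaviour of the `top cell'. First I would extend $\zeta$ from $M-B$ to $M$. Since $\K^0(M)\to\K^0(M-B)$ is surjective by the splitting~1.2, $\zeta$ extends stably over $M$; as a stable complex bundle of rank $4$ over the $8$-complex $M$ is carried by an honest rank-$4$ bundle, we may take $\tilde\zeta$ to be an actual rank-$4$ extension. Because $H^i(M;\Zz)\to H^i(M-B;\Zz)$ is an isomorphism for $i\le 7$, the classes $c_1(\tilde\zeta)=l$, $c_2(\tilde\zeta)=u$, $c_3(\tilde\zeta)=v$ are forced, and only $c_4(\tilde\zeta)\in H^8(M;\Zz)=\Zz$ depends on the extension. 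Two extensions differ by a multiple of the generator $\beta$ of $\K^0(M,M-B)=\Zz$, supported on the top cell, with $\ind\beta=1$ and $\ch(\beta)$ a generator of $H^8$; since $c_1,c_2,c_3$ are fixed, (\ref{3B}) shows that replacing $\tilde\zeta$ by $\tilde\zeta+n\beta$ changes $c_4(\tilde\zeta)[M]$ by $-6n$. Hence the attainable values of $c_4(\tilde\zeta)[M]$ form exactly one residue class modulo $6$, and an extension with $c_4=w$ exists if and only if $w[M]\equiv c_4(\tilde\zeta)[M]\mod 6$.

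It remains to pin this residue class to $I$. For any complex bundle $\eta$ over $M$ we have $\ind\eta=(\e^{c/2}\hat{\mathcal A}(\tau M)\ch(\eta))[M]\in\Zz$, and in the degree-$8$ part the top Chern class enters only through the term $-c_4/6$ of $\ch_4$ in (\ref{3B}). Therefore $\Phi(c_1,c_2,c_3):=6\ind\eta+c_4(\eta)[M]$ is a $c_4$-free expression in the lower Chern classes of $\eta$ (and in $c$, $\tau M$), and being a sum of two integers it lies in $\Zz$. Using the identity $p_1(\tau M)=2q_1(\tau M;c)+c^2$, I would expand the degree-$8$ component of $\e^{c/2}\hat{\mathcal A}(\tau M)\ch(\eta)$ and compare with $I$: a direct calculation shows that every monomial involving $u=c_2$ or $v=c_3$ in $\Phi$ cancels precisely against the corresponding monomial of $I$, so that $\Phi-I$ depends only on $l$, $c$ and the Pontrjagin classes of $\tau M$.

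To evaluate $\Phi-I$ I would specialise to $u=v=0$, where $I=0$, and compute $\Phi$ on a convenient global bundle with these Chern classes, namely $\lambda\oplus\underline{\Cc}^3$ for a line bundle $\lambda$ with $c_1(\lambda)=l$. This bundle has $c_4=0$, whence $\Phi(l,0,0)=6\ind(\lambda\oplus\underline{\Cc}^3)=6(\ind\lambda+3\ind\underline{\Cc})\in 6\Zz$. As $\Phi-I$ is independent of $u,v$, we conclude $\Phi-I\in 6\Zz$ for all $l,u,v$. In particular $I=\Phi-(\Phi-I)\in\Zz$, and for the chosen extension $c_4(\tilde\zeta)[M]=\Phi-6\ind\tilde\zeta\equiv\Phi\equiv I\mod 6$. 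Combined with the first paragraph this yields the criterion $w[M]\equiv I\mod 6$; and since $c_4(\tilde\zeta)[M]$ makes no reference to the spin${}^c$ class $c$, the residue $I\mod 6$ is automatically independent of the choice of $c$.

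The main obstacle is the comparison in the second paragraph: the explicit expansion of the degree-$8$ term of $\e^{c/2}\hat{\mathcal A}(\tau M)\ch(\eta)$ and the verification, after substituting $p_1=2q_1+c^2$, that all $u$- and $v$-dependent contributions reproduce exactly those assembled in $I$. This is a finite but fiddly piece of bookkeeping with the rational coefficients of $\hat{\mathcal A}$ and of the Chern character; once it is done, the top-cell mod-$6$ phenomenon and the integrality of $\ind$ make the rest formal.
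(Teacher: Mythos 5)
Your proposal is correct and takes essentially the same route as the paper: reduce to stable $K$-theory via the stable range for rank-$4$ bundles, classify the extensions over the top cell by the spin${}^c$ index, and normalize against the line bundle $\lambda$ --- indeed your identity $\Phi-I=6\,\ind (\lambda\oplus\Cc^3)$ is exactly the paper's displayed formula $\ind (\zeta -\Cc^4)-\ind (\lambda -\Cc )=\frac{1}{6}(I-w[M])$ rearranged. The monomial-by-monomial cancellation of the $u$- and $v$-terms that you defer does check out (using $q_1(\tau M;\, c)=(p_1(\tau M)-c^2)/2$), so the argument is complete.
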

\begin{proof}
We observe, first of all, that $4$-dimensional complex vector bundles
over an $8$-dimensional space lie in the stable range, and so the
problem is $K$-theoretic.
By (1.2), the stable complex bundles $y\in \K^0(M)$
over $M$ extending a given bundle over $M-B$ are classified by the integer 
$\ind y\in\Zz$.
A computation using (\ref{3A}) and (\ref{3B}) shows that, for a bundle
$\zeta$ with the given Chern classes and a complex line bundle
$\lambda$ with $c_1(\lambda )=l$, we have
$$
\ind (\zeta -\Cc^4) -\ind (\lambda -\Cc ) = \frac{1}{6}(I-w[M]).
$$
The result follows.
\end{proof}

In the same way, we may use the splitting of $\KO^0(M)$ to distinguish
oriented real vector bundles over $M$.
\begin{prop}\label{3F}
Let $\xi$ and $\xi'$ be oriented $8$-dimensional real vector bundles
over $M$ with $w_2(\xi )=w_2(\xi')=\rho_2(l)$, where $l\in H^2(M;\, \Zz )$.
Then $\xi$ and $\xi'$ are isomorphic if and only if
$$
\text{\rm a)\quad}
q_1(\xi ;\, l)=q_1(\xi';\, l),\qquad
\text{\rm b)\quad}
p_2(\xi )=p_2(\xi'),\qquad
\text{\rm c)\quad}
e(\xi )=e(\xi').
$$
\end{prop}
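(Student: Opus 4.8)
The ``only if'' direction is immediate, since $q_1(\,\cdot\,;l)$, $p_2$ and $e$ are isomorphism invariants of oriented bundles once the common class $l$ is fixed; so an isomorphism $\xi\cong\xi'$ forces a), b) and c). For the converse I would climb the three-step ladder supplied by (1.1), (1.2)--(1.3) and Proposition~\ref{2D}, using each hypothesis to clear one rung.

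First I would restrict to $M-B$. Since $q_1(\xi;l)=q_1(\xi';l)$ restricts to the same equality over $M-B$ and $w_2(\xi)=w_2(\xi')=\rho_2(l)$, Proposition~\ref{2D} shows that $\xi$ and $\xi'$ are stably isomorphic over $M-B$. Hence $[\xi]-[\xi']$ lies in the kernel of the restriction $\KO^0(M)\to\KO^0(M-B)$, which by the exact sequence of (1.2) is exactly the summand $\Zz=\KO^0(M,M-B)$ split off by the spin${}^c$ index. I would write $[\xi]-[\xi']=n\,g$, where $g$ generates this $\Zz$.

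The key step is to show that $p_2$ sees the generator $g$. Under the complexification isomorphism $\KO^0(M,M-B)\to\K^0(M,M-B)$ of (1.2), $g$ goes to a generator $h$ of $\K^0(M,M-B)=\rK^0(S^8)=\Zz$. All Chern classes of $h$ below the top dimension vanish, so the degree-$8$ term of the Chern character (\ref{3B}) collapses to $\ch_4(h)=-c_4(h)/6$; as $h$ is a generator, $\ch_4(h)$ is a generator of $H^8$, whence $c_4(h)=\pm 6$. Because the total Chern class is multiplicative and the lower Chern classes of $n\,g$ live in the vanishing groups $H^{<8}(M,M-B;\Zz)$, I obtain
$$
p_2(\xi)-p_2(\xi')=c_4(\xi_\Cc)-c_4(\xi'_\Cc)=n\,c_4(h)=\pm 6n
$$
in $H^8(M;\Zz)=\Zz$. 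Hypothesis b) then forces $6n=0$, i.e. $n=0$, so that $[\xi]=[\xi']\in\KO^0(M)$. Finally, with $[\xi]=[\xi']$ in $\KO^0(M)$ and both bundles genuine oriented $8$-dimensional bundles, (1.1) gives $\xi\cong\xi'$ as oriented bundles precisely when $e(\xi)[M]=e(\xi')[M]$, which is supplied by c).

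The main obstacle is the middle step: verifying that $p_2(g)$ is a nonzero (indeed $\pm 6$) multiple of the generator of $H^8(M;\Zz)$. This is exactly where the spin${}^c$ hypothesis enters, through the complexification isomorphism on the top cell in (1.2); without it the complexification of the top cell could be only twice a generator, the integral class $p_2$ could be invisible on the top cell, and condition b) would fail to pin down the stable class.
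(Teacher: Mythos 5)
Your argument is correct, and its skeleton is exactly the paper's: condition a) plus Proposition \ref{2D} gives stable isomorphism over $M-B$; condition b) kills the remaining top-cell indeterminacy; and condition c) together with (1.1) upgrades stable isomorphism to isomorphism of oriented bundles. The one place you genuinely diverge is the middle step. The paper stays inside its index formalism: a) forces $p_1(\xi)=p_1(\xi')$ (since $2q_1(\,\cdot\,;l)=p_1-l^2$), and then (\ref{3A}) and (\ref{3D}) show that $\ind(\Cc\otimes\xi)=\ind(\Cc\otimes\xi')$ if and only if $p_2(\xi)=p_2(\xi')$, so that a) and b) determine the stable class via the splitting of (1.2). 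You instead evaluate $c_4$ on the generator $h$ of $\K^0(M,M-B)\cong\rK^0(S^8)$, obtaining $c_4(h)=\pm 6$ from the Chern character and hence $p_2(\xi)-p_2(\xi')=\pm 6n$. This is the same factor $6$ as in the paper's computation (it is the denominator of $\ch_4$), but your packaging is more elementary --- no $\hat{\mathcal A}$-class, no index formula --- and it makes visible that this step uses only exactness of $\KO^0(M,M-B)\to\KO^0(M)\to\KO^0(M-B)$ in the middle, not the spin${}^c$ splitting.

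One correction to your closing paragraph, which misattributes the role of the spin${}^c$ hypothesis: the isomorphism $\KO^0(M,M-B)\to\K^0(M,M-B)$ is Bott periodicity, valid for any closed oriented $8$-manifold --- complexification $\rKO^0(S^{8})\to\rK^0(S^{8})$ is unconditionally an isomorphism; the ``twice a generator'' phenomenon you fear occurs on cells of dimension $4\pmod 8$, such as $S^4$, not on the top cell here. What the spin${}^c$ structure buys in (1.2) is the \emph{splitting} of the two sequences, i.e.\ split injectivity of the top cell into $\KO^0(M)$ and the classification of stable extensions by the index. But for this particular proposition even that is not needed: your own computation shows that a nonzero multiple of $g$ has nonzero $p_2$, hence nonzero image in $\KO^0(M)$, so injectivity comes for free, and the proof goes through with only the stated hypothesis $w_2(\xi)=w_2(\xi')=\rho_2(l)$. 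This is a harmless slip, since spin${}^c$ is a standing assumption in the paper, but the reason you give for needing it is not the right one.
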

\begin{proof}
Condition a) is necessary and sufficient for the restrictions of
$\xi$ and $\xi'$ to $M-B$ to be isomorphic.  It implies, in particular,
that $p_1(\xi )=p_1(\xi')$.
And then $\ind (\Cc\otimes\xi) =\ind (\Cc\otimes\xi')$ if and only
if $p_2(\xi )=p_2(\xi')$, from (\ref{3A}) and (\ref{3D}). 
Hence a) and b) are necessary and sufficient conditions for
$\xi$ and $\xi'$ to be stably isomorphic, by (1.2).
Finally, equality of the Euler classes, (1.1), gives isomorphism
as oriented vector bundles.
\end{proof}

\section{Reduction to $\U (4)$ : a complex structure}
Consider an oriented real vector bundle $\xi$ of dimension $8$ over
$M$ such that $w_2(\xi )$ lifts to an integral class $l\in H^2(M;\, \Zz )$.
We shall determine necessary and sufficient conditions for $\xi$ to
admit a complex structure with first Chern class $c_1=l$.
Notice that $p_2(\xi )-q_1^2(\xi ;\, l)$ is divisible by $2$
in $H^8(M;\,\Zz )$,
since $\rho_2(p_2(\xi )-q_1^2(\xi ;\, l))=w_4^2(\xi )-w_4^2(\xi )=0$.
\begin{prop}\label{4A}
{\rm (Heaps, \v Cadek and Van\v zura)}
Suppose that $\xi$ is an oriented $8$-dimensional real vector bundle
over $M$ admitting a spin${}^c$ structure.
Let $l\in H^2(M;\, \Zz )$ be a class such that $w_2(\xi )=\rho_2(l)$.
Then the structure group $\SO (8)$ of $\xi$ admits a reduction to
$\U (4)$ with $c_1=l$ if and only if there is a class $v\in H^6(M;\, \Zz )$
with $\rho_2(v)=w_6(\xi )$ such that
$$
\text{\rm a)\quad}
\frac{1}{2} (p_2(\xi )- q_1^2(\xi ;\, l))[M]  \equiv J\, (\text{\rm mod}\, 2)
\text{\quad and \quad}
\text{\rm b)\quad}
p_2(\xi )-q_1^2(\xi ;\, l)=2(e(\xi )-lv),
$$
where
$$
J=\frac{1}{2}(q_1(\xi ;\, l)(q_1(\xi ;\, l)-q_1(\tau M;\, c)+2l^2+3lc+c^2)
+3cv)[M]
$$
(which is an integer if $\rho_2(v)=w_6(\xi )$).
\end{prop}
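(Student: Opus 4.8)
The plan is to reduce the question of a $\U(4)$-reduction to a question about stable complex structures and then to an Euler-class equality, using the two key results already available: the stable classification of complex bundles over $M$ via the spin${}^c$ index (Proposition \ref{3E}) and the unstable classification of oriented $8$-bundles via $(q_1,p_2,e)$ (Proposition \ref{3F}). A complex structure on $\xi$ with $c_1=l$ is exactly an isomorphism of oriented real bundles $\xi\cong\zeta_{\Rr}$, where $\zeta$ is a $4$-dimensional complex bundle over $M$ whose underlying real bundle is oriented-isomorphic to $\xi$. Since $4$-dimensional complex bundles over an $8$-dimensional base are in the stable range, the existence of a complex structure is equivalent to finding a stable complex $\zeta$ with $c_1(\zeta)=l$, the correct real characteristic classes, and the matching Euler number.

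First I would translate the real invariants of $\xi$ into Chern-class constraints on a candidate $\zeta$. The forgetful relations give $c_1(\zeta)=l$, and the underlying real class must have $q_1(\zeta_{\Rr};l)=q_1(\xi;l)$, i.e. $c_2(\zeta)=-q_1(\xi;l)$ using the identity $q_1(\zeta;l)=-c_2(\zeta)$ recorded after the spin${}^c$ definition. Then $p_2(\xi)=p_2(\zeta_{\Rr})$ forces a value of $c_4(\zeta)$ in terms of $c_2,c_3$ via the standard Pontrjagin--Chern relation $p_2=c_2^2-2c_1c_3+2c_4$ (here, after accounting for $c_1=l$); this is where the factor $p_2(\xi)-q_1^2(\xi;l)$ being even, noted before the statement, is used, and it produces the $c_3$-class $v$ with $\rho_2(v)=w_6(\xi)$ as the free choice. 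The realizability condition ${\rm Sq}^2\rho_2 u=\rho_2(v+lu)$ from Corollary \ref{2C} must be checked to hold automatically for $u=-q_1(\xi;l)$, using $\rho_2(q_1(\xi;l))=w_4(\xi)$ and the Wu formula; I expect ${\rm Sq}^2 w_4+w_2 w_4=w_6$ type identities to make this consistent.

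Next I would feed these Chern classes into Proposition \ref{3E} to extract the integer $I\pmod 6$ obstructing the extension of the bundle from $M-B$ to all of $M$, and then reconcile it against the required Euler number. By Proposition \ref{3F}, once the stable data $(q_1,p_2)$ agree, the only remaining invariant is the Euler class; for a complex bundle $e(\zeta_{\Rr})=c_4(\zeta)$, so condition (b) is precisely the statement $c_4(\zeta)=e(\xi)$ rewritten through $p_2-c_2^2=2c_4-2c_1c_3=2(c_4-lv)$, giving $p_2(\xi)-q_1^2(\xi;l)=2(e(\xi)-lv)$. Condition (a) should then emerge as the mod-$2$ reduction of the integrality constant $I$ from Proposition \ref{3E}: substituting $c_2=-q_1(\xi;l)$, $c_3=v$ into the formula for $I$ and comparing with $w[M]=c_4[M]=e(\xi)[M]$ modulo $6$, the mod-$2$ part becomes the stated congruence with $J$, while the mod-$3$ part is automatically satisfied because $c_4(\zeta)$ is free to be adjusted by multiples of $6$ through the spin${}^c$ index splitting.

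\textbf{The main obstacle} will be the bookkeeping in matching $I\pmod 6$ against the Euler number: Proposition \ref{3E} only controls $c_4[M]$ modulo $6$, whereas the Euler class $e(\xi)$ is an honest integral class in $H^8$, so I must show that the mod-$6$ obstruction splits cleanly into a mod-$3$ part that is always solvable (absorbed by the stable freedom) and a mod-$2$ part that survives as the genuine condition (a). Disentangling these, and verifying that the combination appearing in $J$ is exactly the odd part of $I$ after the substitution $c_2=-q_1$, $c_3=v$, $c_4=e(\xi)$, is the delicate computational heart; the independence of $J\pmod 2$ from the choice of $c$ and of $v$ (subject to $\rho_2 v=w_6$) will need the torsion and Wu-formula remarks from Remark \ref{3C}.
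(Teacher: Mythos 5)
Up to one step, your outline reproduces the paper's own argument: reduce to the stable problem via Propositions \ref{3E} and \ref{3F}; take $c_2(\zeta)=-q_1(\xi;\,l)$ and $c_3(\zeta)=v$; note that the realizability condition ${\rm Sq}^2\rho_2(u)=\rho_2(v+lu)$ is automatic from ${\rm Sq}^2w_4(\xi)=w_6(\xi)+w_2(\xi)w_4(\xi)$; read off b) as the Euler-class equality $c_4(\zeta)=e(\xi)$ through $p_2(\xi)=2c_4(\zeta)-2c_1(\zeta)c_3(\zeta)+c_2^2(\zeta)$; and identify a) with the mod-$2$ part of the extension congruence $w[M]\equiv I\ \mod{6}$ of Proposition \ref{3E}, with $J=I-(lv)[M]$. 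All of that is sound and is exactly what the paper does.

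The genuine gap is your disposal of the mod-$3$ part. You assert it is ``automatically satisfied because $c_4(\zeta)$ is free to be adjusted by multiples of $6$ through the spin${}^c$ index splitting,'' and later that it is ``absorbed by the stable freedom.'' That mechanism fails, and in fact points the wrong way: moving through the stable extensions changes $c_4[M]$ precisely by multiples of $6$, so it fixes the residue of $c_4[M]$ both mod $2$ and mod $3$; and once you demand that $\zeta$ be stably isomorphic to $\xi$ as a real bundle, $c_4$ is pinned exactly by $p_2(\xi)=2c_4-2c_1c_3+c_2^2$, leaving no freedom at all. The condition for extending with $c_4[M]=e(\xi)[M]$ is genuinely $e(\xi)[M]\equiv I\ \mod{6}$, and one must \emph{prove} that the mod-$3$ half follows from the other hypotheses. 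The paper supplies the missing input: the integrality of $\ind(\Cc\otimes_\Rr\xi-\Cc^8)-\ind(\Cc\otimes_\Rr\lambda-\Cc^2)$ yields the congruence (\ref{4B}), namely $(p_2(\xi)-q_1^2(\xi;\,l))[M]\equiv(q_1(\xi;\,l)(q_1(\xi;\,l)-q_1(\tau M;\,c)+2l^2+c^2))[M]\ \mod{6}$, whose mod-$3$ part, combined with b), is exactly $e(\xi)[M]\equiv I\ \mod{3}$. (Conceptually: there can be no $3$-primary obstruction, since $2\xi=\xi\oplus\xi$ already carries the complex structure $\Cc\otimes_\Rr\xi$.) Without this step, or an equivalent argument, your proof establishes the criterion only with a) strengthened to a mod-$6$ congruence, which is strictly stronger than the stated proposition.
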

\begin{remark}
The condition a), given that $\rho_2(v)=w_6(\xi )$, is necessary and
sufficient for the existence of a stable complex structure on $\xi$,
that is, a complex structure on $\xi\oplus\Rr^2$.
(More precisely, if $\xi$ has a stable complex structure outside a
finite subset of $M$, then the sum of the local obstructions
in $\pi_7(\SO (\infty )/\U (\infty ))=\Zz /2$ to extending the stable
structure to $M$ is equal to $\frac{1}{2}(p_2(\xi )-q_1^2(\xi ;\, l))[M]-J$
(mod $2$).)
If the class $c$ can be chosen to be torsion, then the expression for
$J$ simplifies, and, in particular, $J$ does not depend on $v$. 
If not, then, as was observed in (\ref{3C}), there is a
class $t$ such that $(ct)[M]$ is odd, and $v$ can be modified by
addition of a multiple of $2t$ to satisfy a). Thus, if
$w_2(M)H^6(M;\,\Zz )\not=0$, the only condition required
for the existence of a stable complex structure with $c_1=l$ is that
$w_6(\xi )$ should be the reduction of an integral class.
(Compare \cite{TH}.)
\end{remark}
\begin{proof}
Suppose that there is a $4$-dimensional complex vector bundle $\zeta$,
with Chern classes $c_1=l$, $c_2=u$, $c_3=v$, $c_4=w$, that
is stably isomorphic to $\xi$.
Then 
$$
\rho_2(v)=w_6(\xi ), \quad u=-q_1(\xi ;\, l)\quad \text{\rm and}\quad
2w=p_2(\xi )-q_1^2(\xi ;\, l) + 2lv,
$$
because $p_2(\xi )=c_4(\zeta\oplus\bar\zeta )=2c_4(\zeta )-2c_1(\zeta )
c_3(\zeta )+ c_2^2(\zeta )$.

Necessary and sufficient conditions for the existence of a stable
complex structure on $\xi$ are, therefore, given by (\ref{3E})
and (\ref{3F}) with $J= I - (lv)[M]$.
(Notice that ${\rm Sq}^2 w_4(\xi )=w_6(\xi )+w_2(\xi )w_4(\xi )$.)
This reduces to the condition a), except that congruence is required
(mod $6$).
But the congruence is automatically satisfied (mod $3$). 
For the integrality of $\ind (\Cc\otimes_\Rr\xi -\Cc^8)-
\ind (\Cc\otimes_\Rr\lambda -\Cc^2)$, where $\lambda$ as before is a complex
line bundle with $c_1=l$, gives 
\begin{equation}\label{4B}
(p_2(\xi )-q_1^2(\xi ;\, l))[M] \equiv 
(q_1(\xi ;\, l)(q_1(\xi ;\, l)-q_1(\tau M;\, c) +2l^2 +c^2))[M]
\,\, (\text{\rm mod}\, 6).
\end{equation}
(It is, of course, clear that there can be no $3$-primary obstruction to
the existence of a stable complex structure on $\xi$, because
$2\xi=\xi\oplus\xi$ admits a complex structure as $\Cc\otimes_\Rr \xi$.)

Condition b) expresses the equality of the Euler classes required for
isomorphism rather than stable isomorphism.
\end{proof}

\section{Reduction to $\Spinc (6)$ : a $2$-field}
In this section we consider reduction to 
$$
\Spinc (6) \to \SO (6) \to \SO (8),
$$
where the first homomorphism is the canonical projection 
and the second is the standard inclusion.

There is an isomorphism
$$
\{ (z,g)\in \Tt\times \U (4) \st z^2=\det g\} \to \Spinc (6).
$$

Suppose that $\zeta$ is a $4$-dimensional $\Cc$-vector bundle over
$M$, and $\lambda$ a complex line bundle, and that we are given an
isomorphism $\Lambda^4\zeta \to \lambda\otimes\lambda$. Then
$\lambda^*\otimes\Lambda^2\zeta$ has a real structure as $\Cc\otimes_\Rr \eta$,
where $\eta$ is a $6$-dimensional real vector bundle over $M$.
The trivialization of $\Lambda^6 (\lambda^*\otimes \Lambda^2\zeta )=
\lambda^{-6}\otimes (\Lambda^4\zeta )^3$ gives an orientation of $\eta$.
\begin{lemma}\label{5A}
The characteristic classes of $\eta$ are as follows:
$$
\begin{matrix}
w_2(\eta )=\rho_2(l),\quad&
q_1(\eta ;\, l)= l^2-c_2(\zeta ), \\
e(\eta )=c_3(\zeta )+lq_1(\eta ;\, l),\quad &
p_2(\eta )= q_1^2(\eta ;\, l ) +2l e(\eta) -4c_4(\zeta ),
\end{matrix}
$$
where $l=c_1(\lambda )$ and the orientation of $\eta$ is chosen appropriately.
\end{lemma}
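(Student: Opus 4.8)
The plan is to compute the characteristic classes of $\eta$ by working entirely with the complexification $\Cc\otimes_\Rr\eta = \lambda^*\otimes\Lambda^2\zeta$, since the invariants $w_2$, $q_1$, $p_2$ (and, up to sign issues, $e$) are all controlled by Chern classes of this complex bundle together with the given constraint $\Lambda^4\zeta = \lambda^{\otimes 2}$, i.e. $c_1(\zeta)=2l$. First I would record the elementary fact that for a $6$-dimensional real bundle $\eta$ one has $p_i(\eta) = (-1)^i c_{2i}(\Cc\otimes_\Rr\eta)$, so $p_1(\eta)=-c_2(\sigma)$ and $p_2(\eta)=c_4(\sigma)$, writing $\sigma := \lambda^*\otimes\Lambda^2\zeta$ for brevity. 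The plan then splits into computing the total Chern class of $\sigma$ from that of $\Lambda^2\zeta$ twisted by $\lambda^*$.

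**The key computational step** is to expand $c(\Lambda^2\zeta)$ in terms of $c_1,\dots,c_4$ of $\zeta$. Introducing Chern roots $x_1,\dots,x_4$ of $\zeta$, the bundle $\Lambda^2\zeta$ has the six roots $x_i+x_j$ ($i<j$), so its Chern classes are the elementary symmetric functions of these sums. I would compute $c_1(\Lambda^2\zeta)=3c_1(\zeta)$, and then the lower symmetric functions, substituting $c_1(\zeta)=2l$ throughout. Twisting by $\lambda^*$ shifts each root by $-l$, so $\sigma$ has roots $(x_i+x_j)-l$; equivalently I compute $c(\sigma)$ from $c(\Lambda^2\zeta)$ by the standard twisting formula $c_k(\sigma)=\sum_{j}\binom{6-j}{k-j}(-l)^{k-j}c_j(\Lambda^2\zeta)$. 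From $c(\sigma)$ I read off $c_1(\sigma)$ (which must vanish, confirming the real structure and fixing the orientation), then $c_2(\sigma)$, $c_3(\sigma)$ and $c_4(\sigma)$. The identities $q_1(\eta;l)=-\tfrac12 c_2(\sigma)$ when $c_1(\sigma)=0$ (via $p_1(\eta)=-c_2(\sigma)=2q_1(\eta;l)-l^2$, so $2q_1=l^2-c_2(\sigma)$), $e(\eta)=c_3(\sigma)$ up to sign, and $p_2(\eta)=c_4(\sigma)$ then yield the four formulae after substituting $c_1(\zeta)=2l$ and simplifying.

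**The main obstacle** I anticipate is the Euler-class formula $e(\eta)=c_3(\zeta)+lq_1(\eta;l)$. The Euler class is not a stable invariant and is not simply the top Chern class of $\sigma$ without a sign that depends on the chosen orientation; the trivialization of $\Lambda^6\sigma=\lambda^{-6}\otimes(\Lambda^4\zeta)^{\otimes 3}$ must be used to pin down the sign, which is exactly why the statement says ``the orientation of $\eta$ is chosen appropriately.'' I expect $c_3(\sigma)$ to produce a term $c_3(\zeta)$ together with correction terms in $l$ and $c_2(\zeta)$; reorganizing these into the claimed form $c_3(\zeta)+l\,q_1(\eta;l)$ (using $q_1(\eta;l)=l^2-c_2(\zeta)$) will be the delicate bookkeeping. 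The two mod-$2$ identities $w_2(\eta)=\rho_2(l)$ and the determination of $q_1(\eta;l)$ via $\rho_2(q_1)=w_4(\eta)$ are comparatively routine, following from $w_2(\eta)=\rho_2(c_1(\sigma)+ \text{something})$; but I would double-check $w_2(\eta)=\rho_2(l)$ directly, since $w_2$ of a real bundle equals the mod-$2$ reduction of $\tfrac12 c_1$ of its complexification only when a spin${}^c$ lift is in play, and here the lift is supplied by $\lambda$. Finally, I would verify the last formula $p_2(\eta)=q_1^2(\eta;l)+2l\,e(\eta)-4c_4(\zeta)$ as a consistency check against the independently computed $c_4(\sigma)$, confirming all four expressions simultaneously.
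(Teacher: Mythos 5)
Your plan breaks down at exactly the point you flagged as the main obstacle, and the fix you sketch cannot work. Write $x_1,\dots,x_4$ for the Chern roots of $\zeta$; the hypothesis $\Lambda^4\zeta\cong\lambda\otimes\lambda$ gives $x_1+x_2+x_3+x_4=2l$, so the six roots $x_i+x_j-l$ of $\sigma=\lambda^*\otimes\Lambda^2\zeta$ pair off into negatives: $(x_1+x_2-l)+(x_3+x_4-l)=0$, and likewise for the other two pairs. Hence, setting $a=x_1+x_2-l$, $b=x_1+x_3-l$, $c'=x_1+x_4-l$, the total Chern class of $\sigma$ is $(1-a^2)(1-b^2)(1-c'^2)$ and involves only the squares; in particular $c_1(\sigma)=c_3(\sigma)=c_5(\sigma)=0$ (as it must be: odd Chern classes of a complexification are $2$-torsion). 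So $c_3(\sigma)$ does not ``produce a term $c_3(\zeta)$ together with correction terms'' --- it vanishes identically --- and no choice of orientation or sign convention can recover $e(\eta)$ from it. The Euler class is the Pfaffian-type class $e(\eta)=\pm\, abc'$ (one checks that $abc'$ is symmetric in the $x_i$ given the constraint, and expands to $c_3(\zeta)-lc_2(\zeta)+l^3=c_3(\zeta)+lq_1(\eta;\,l)$); it is a square root of $-c_6(\sigma)$ but is \emph{not} a polynomial in the Chern classes of $\sigma$, so no manipulation of $c(\sigma)$ alone will yield it. Two smaller leaks of the same kind: $w_2(\eta)$ is likewise not a function of $c(\sigma)$ (the complexification only sees $w(\eta)^2$, and $c_1(\sigma)=0$ gives no information about $\rho_2(l)$), and dividing $p_1(\eta)-l^2$ by $2$ determines $q_1(\eta;\,l)$ only modulo $2$-torsion in $H^4(M;\,\Zz)$.

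The paper's proof supplies precisely the structure your computation lacks. Over the $7$-skeleton the $4$-dimensional bundle $(\lambda^*)^2\otimes\zeta$ has a nowhere-zero section (the top obstruction lies in $H^8$, which vanishes there), so $\zeta\,|\,M^{(7)}=\zeta'\oplus\lambda^2$ with $\Lambda^3\zeta'=\Cc$; then $\lambda^*\otimes\Lambda^2\zeta=(\lambda^*\otimes\Lambda^2\zeta')\oplus(\lambda\otimes\zeta')$, and since $\lambda^*\otimes\Lambda^2\zeta'\cong\bar{\lambda\otimes\zeta'}$, this exhibits $\eta\,|\,M^{(7)}$ as the underlying real bundle of the \emph{complex} $3$-dimensional bundle $\lambda\otimes\zeta'$. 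The classes $w_2$, $q_1$, $e$ live in degrees $\leq 6$, where restriction to $M^{(7)}$ is injective, so they can be read off integrally (Euler class and all) from $c_1$, $c_2$, $c_3$ of $\lambda\otimes\zeta'$; only $p_2$, in degree $8$ and hence invisible on the $7$-skeleton, is computed as you propose, as $c_4(\sigma)$. In short, the one quarter of your plan that treats $p_2$ is sound, but for the other three classes you need the auxiliary splitting (or some equivalent device producing an actual complex structure on $\eta$ away from the top cell), not just the Chern roots of the complexification.
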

\begin{proof}
Over the $7$-skeleton,
the $4$-dimensional complex vector bundle $(\lambda^*)^2\otimes\zeta$
has a non-zero section, and hence 
we can write $\zeta =\zeta'\oplus \lambda^2$, with $\Lambda^3\zeta'=\Cc$. 
So 
$\lambda^*\otimes\Lambda^2\zeta = 
(\lambda^*\otimes\Lambda^2\zeta')\oplus (\lambda\otimes\zeta')$
and $\eta$ is the underlying real bundle of $\lambda\otimes\zeta'$.
This allows the computation of $w_2$, $q_1$ and $e$. 
The second Pontrjagin class is $c_4(\lambda^*\otimes\Lambda^2\zeta )$.
\end{proof}
\begin{prop}\label{5B}
Let $\xi$ be an oriented $8$-dimensional real vector bundle
over $M$ with $w_2(\xi )=\rho_2(l)$, $l\in H^2(M;\, \Zz )$. 
Let $v\in H^6(M;\,\Zz )$.
Then $\xi$ admits a  reduction of structure group to $\Spinc (6)$
with spin${}^c$ characteristic class $l$ and Euler class $v$
if and only if the following conditions are satisfied.
\begin{align*}
e(\xi )&=0;\quad \rho_2(v)=w_6(\xi );\\
(\frac{1}{2}(p_2(\xi )-q_1^2(\xi ;\, l)) &+
q_1(\xi ;\, l)(q_1(\xi ;\, l) -q_1(\tau M;\, c) +2l^2+3lc +c^2)
+ 3(l+c)v)[M]\\
 & \equiv 0\,\, (\text{\rm mod}\, 4).
\end{align*}
\end{prop}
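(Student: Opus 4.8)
The plan is to interpret a reduction of $\xi$ to $\Spinc (6)$ through $\Spinc (6)\to\SO (6)\to\SO (8)$ as an isomorphism $\xi\cong\eta\oplus\Rr^2$ of oriented bundles, where $\eta$ is a $6$-dimensional oriented spin${}^c$ bundle with spin${}^c$ characteristic class $l$ and Euler class $e(\eta )=v$. Using the description of $\Spinc (6)$ recorded just before Lemma \ref{5A} and the lemma itself, such an $\eta$ is precisely the bundle associated to a $4$-dimensional complex bundle $\zeta$ with $c_1(\zeta )=2l$ (choose $\lambda$ with $c_1(\lambda )=l$, so the isomorphism $\Lambda^4\zeta\cong\lambda^2$ exists automatically), and the characteristic classes of $\eta$ are those computed in Lemma \ref{5A}. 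Two necessary conditions are then immediate: $e(\xi )=0$, since $\eta\oplus\Rr^2$ has a nowhere-zero section, and $\rho_2(v)=w_6(\xi )$, since $v=e(\eta )$ and $\rho_2 e(\eta )=w_6(\eta )=w_6(\xi )$.

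Next I would reduce matters to the existence of $\zeta$. By Proposition \ref{3F} the isomorphism $\xi\cong\eta\oplus\Rr^2$ is equivalent to the three equalities $q_1(\eta ;\, l)=q_1(\xi ;\, l)$, $p_2(\eta )=p_2(\xi )$ and $e(\xi )=e(\eta\oplus\Rr^2)=0$. Through the formulae of Lemma \ref{5A} these, together with $e(\eta )=v$, prescribe $c_2(\zeta ),c_3(\zeta ),c_4(\zeta )$ uniquely in terms of $q_1(\xi ;\, l),p_2(\xi ),l$ and $v$; in particular $4c_4(\zeta )=q_1^2(\xi ;\, l)+2lv-p_2(\xi )$. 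So everything comes down to whether a $4$-dimensional complex bundle over $M$ with these prescribed Chern classes exists, which is governed by Proposition \ref{3E}. The Steenrod-square condition there becomes ${\rm Sq}^2(w_2^2+w_4)=w_6+w_2w_4$, which by the Wu formula reduces to $w_3^2(\xi )=0$, and $w_3(\xi )={\rm Sq}^1w_2(\xi )=0$ since $\xi$ is spin${}^c$; hence it is automatic. The remaining content is (a) integrality of the prescribed $c_4$, which is a congruence $\mod{2}$ because $p_2-q_1^2$ is already even, and (b) the $\mod{6}$ extension condition $c_4[M]\equiv I\ \mod{6}$ of Proposition \ref{3E} applied with $(c_1,c_2,c_3)=(2l,c_2(\zeta ),c_3(\zeta ))$. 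Assembling (a) and (b) yields a single congruence $\mod{12}$, which after substitution takes the form $C+R'\equiv 0\ \mod{12}$, where $C$ is the integer displayed in the statement and $R'=(-7l^4+l^2q_1(\tau M;\, c)-6cl^3-c^2l^2)[M]$ is an ``ambient'' correction term not involving the Chern data of $\zeta$.

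The main obstacle is reconciling this $\mod{12}$ condition with the asserted $\mod{4}$ congruence $C\equiv 0$, and this rests on two integrality inputs. First, $C\equiv 0\ \mod{3}$ holds automatically; this is the ``no $3$-primary obstruction'' phenomenon already exploited in Proposition \ref{4A}, coming from integrality of the spin${}^c$ index of $\Cc\otimes_\Rr\xi$ via relation (\ref{4B}), and it lets me replace $\mod{12}$ by $\mod{4}$ on the $C$ side. Second, I must prove $R'\equiv 0\ \mod{12}$. The $\mod{3}$ part and one half of the $\mod{4}$ part follow from integrality of $\ind (\lambda^k)-\ind (\Cc )=D_k(D_k-q_1(\tau M;\, c))/24$, with $D_k=k^2l^2+klc$ (computed from (\ref{3A}) and (\ref{3B})), applied at $k=\pm 1$; the residual $\mod{2}$ term is $(l^4+l^3c)[M]=\langle w_2(\xi )^4+w_2(\xi )^3w_2(M),[M]\rangle$, which vanishes by the Wu formula together with $w_3(\xi )=0$ (one checks $v_2=w_2(M)$ and ${\rm Sq}^2(w_2^3)=w_2^4$). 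Granting these, the total condition collapses to $C\equiv 0\ \mod{4}$, and since every step is an equivalence the three displayed conditions are simultaneously necessary and sufficient. I expect the delicate $2$-primary bookkeeping in the verification that $R'\equiv 0\ \mod{12}$ to be the hardest part.
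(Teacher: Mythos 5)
Your proposal is correct, and in its skeleton it coincides with the paper's own proof: the same reduction via Lemma \ref{5A} to the existence of a $4$-dimensional complex bundle $\zeta$ with $c_1(\zeta )=2l$ and prescribed $c_2,c_3,c_4$ (in particular $4c_4=q_1^2(\xi ;\, l)+2lv-p_2(\xi )$), the same appeal to Propositions \ref{3E} and \ref{3F}, the same packaging of the integrality of $c_4$ together with the mod-$6$ extension condition into a single mod-$12$ congruence $C+R'\equiv 0$, and the same use of (\ref{4B}) to show $C\equiv 0\pmod 3$ automatically. The one genuine divergence is your treatment of the correction term $R'=\bigl(l^2(q_1(\tau M;\, c)-(7l^2+6cl+c^2))\bigr)[M]$. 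The paper disposes of it with a specialization trick: substituting $\xi =\lambda\oplus\Rr^6$ (where $q_1(\xi ;\, l)=0$, $p_2(\xi )=0$, $v=0$ and the $\Spinc (6)$-reduction visibly exists) into the already-derived mod-$12$ identity forces $R'\in 12\Zz$ with no computation. You instead prove $R'\in 12\Zz$ directly, and your computation checks out: integrality of $\ind (\lambda^{\pm 1}-\Cc )=\frac{1}{24}D_{\pm 1}(D_{\pm 1}-q_1(\tau M;\, c))[M]$ with $D_k=k^2l^2+klc$ gives, on summing $k=1$ and $k=-1$, the congruence $l^2q_1(\tau M;\, c)\equiv l^4+l^2c^2 \pmod{12}$, whence $R'\equiv 6(l^4+cl^3)\pmod{12}$; and the residual class $(l^4+cl^3)[M]\bmod 2$ vanishes because $v_2=w_2(M)=\rho_2(c)$ and ${\rm Sq}^2(\rho_2(l)^3)=\rho_2(l)^4$, using ${\rm Sq}^1\rho_2(l)=0$. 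The trade-off: the paper's evaluation-on-a-model-bundle argument is shorter and illustrates a reusable principle (test the general obstruction formula on a bundle where the structure obviously exists), while your route is self-contained, makes the $2$-primary content of $R'$ explicit, and would survive in situations where no convenient model bundle presents itself -- the ``delicate bookkeeping'' you flagged as hardest is precisely what the paper's specialization sidesteps.
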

\begin{proof}
We require a $4$-dimensional complex vector bundle $\zeta$ over $M$
with first Chern class $c_1(\zeta )=2l$ 
such that the associated bundle $\eta$ satisfies:
$$
w_2(\xi )=w_2(\eta ); \quad
q_1(\xi;\, l)=q_1(\eta ;\, l);\quad
v= e(\eta );\quad
p_2(\xi )=p_2(\eta ).
$$
This will guarantee that $\xi$ and $\eta\oplus\Rr^2$ are stably isomorphic.
The vanishing of the Euler class $e(\xi )$ will then be enough for
the two vector bundles to be isomorphic.

The conditions for the existence of $\zeta$ are given in  Proposition
\ref{3E}.
One has first
$$
\hbox{\rm Sq}^2(\rho_2(l^2-q_1(\xi ;\, l))) = \rho_2(v-lq_1(\xi ;\, l)),
$$ 
that is, ${\rm Sq}^2(w_2^2(\xi )+ w_4(\xi ))=\rho_2(v)+w_2(\xi)w_4(\xi)$,
which reduces to the obvious condition that $\rho_2(v)=w_6(\xi )$.
In the notation of Proposition \ref{3E} we find that
\begin{multline*}
2I-2w[M]=(q_1(\xi ;\, l)(q_1(\xi ;\, l) -q_1(\tau M;\, c) +2l^2 +3lc+c^2)
+ l^2q_1(\tau M;\, c)\\
- l^2(7l^2+6cl+c^2) +
(4l+3c)v)[M] 
+(\frac{1}{2}(p_2(\xi )-q_1^2(\xi ;\, l))-lv)[M] \in 12\Zz.
\end{multline*}
By considering the special case in which $\xi = \lambda\oplus\Rr^6$,
with $q_1(\xi ;\, l)=0$ and $p_2(\xi )=0$, when we may take $v=0$,
we see that
$$
(l^2(q_1(\tau M;\, c) - (7l^2+6cl+c^2)))[M]\in 12\Zz .
$$ 
This leads to the stated congruence, but (mod $12$) instead of (mod $4$).
However the congruence automatically holds (mod $3$), by (\ref{4B}).
\end{proof}
\begin{remark}
Given a complex line bundle $\mu$ over $M$,
there is a similar necessary and sufficient condition for $\xi$ to
split as a sum $\eta\oplus\mu$.
There are other methods when $w_2(\xi )=w_2(M)$ or
$w_2(\mu )=w_2(\xi )+w_2(M)$; see \cite{MCC-BS} and 
\cite{CV1}.
The problems of reduction to $\U (4)$ and $\Spinc (6)$ are related,
or rather the $\SU (4)$ and $\Spin (6)$ reductions - the cases
in which $l=0$. The correspondence is explained in \cite{CV2}.
\end{remark}

\section{Reduction to $\U (3)$ : the adjoint representation}
For a finite dimensional complex Hilbert space $V$, let us write
$\su (V)$ for the Lie algebra of the special unitary group. 
Thus $\dim_\Rr \su (V) =(\dim_\Cc V)^2 -1$.
In particular, taking $V=\Cc^3$, we have an $8$-dimensional
real representation $\su (\Cc^3)$ of $\U (3)$, and so, up to
conjugation, a homomorphism $\rho : \U (3) \to \SO (8)$.
(In fact, since $\rho$ factors through the projective unitary
group $\PU (3)$, which is a quotient of $\SU (3)$ by the central
subgroup of order $3$, the homomorphism must lift to $\U (3)\to\Spin (8)$.)
In this section we investigate the reduction of the structure group
of $\xi$ over $M$ to $\U (3)$.

So suppose that $\zeta$ is a $3$-dimensional complex (unitary) vector
bundle over $M$. We form the associated Lie algebra bundle
$\su (\zeta )$ of real dimension $8$.
\begin{lemma}
The vector bundle $\su (\zeta )$ is spin and its characteristic
classes are as follows:
$$
\begin{matrix}
w_2(\su (\zeta )) =0, &   e(\su (\zeta )) =0, \\
q_1(\su (\zeta )) = c_1^2(\zeta )-3c_2(\zeta ), &
p_2(\su (\zeta )) = c_1^4(\zeta )-6c_1^2(\zeta )c_2(\zeta )
+9c_2^2(\zeta ), \\
w_4(\su (\zeta ))= \rho_2(c_1^2(\zeta )+c_2(\zeta )),&
w_6(\su (\zeta ))= \rho_2(c_1(\zeta )c_2(\zeta )+c_3(\zeta)).
\end{matrix}
$$
\end{lemma}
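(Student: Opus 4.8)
The plan is to reduce everything to a computation on the maximal torus. All of $w_i$, $p_i$, $e$ and $q_1$ of $\su(\zeta)$ are pulled back from universal characteristic classes on $B\U(3)$, and the restriction homomorphisms $H^*(B\U(3);\,\Zz)\to H^*(BT^3;\,\Zz)$ and $H^*(B\U(3);\,\Ff_2)\to H^*(BT^3;\,\Ff_2)$ to the maximal torus are injective (the splitting principle). So it suffices to verify each formula for a split bundle $\zeta=L_1\oplus L_2\oplus L_3$, writing $x_i=c_1(L_i)$ for the Chern roots. First I would decompose $\su(\zeta)$ according to the weights of the adjoint action of the diagonal torus on $\su(\Cc^3)$: the zero-weight (Cartan) subspace gives a trivial summand $\Rr^2$, while the six roots $\pm(x_i-x_j)$, $i<j$, give $\bigoplus_{i<j}(L_iL_j^*)_\Rr$, the realifications of the complex line bundles $L_iL_j^*$ with $c_1=x_i-x_j$. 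Thus $\su(\zeta)\cong\Rr^2\oplus\bigoplus_{i<j}(L_iL_j^*)_\Rr$, and every characteristic class can be read off from this single decomposition.

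For the mod $2$ classes, each oriented real plane bundle $(L_iL_j^*)_\Rr$ has total Stiefel--Whitney class $1+\rho_2(x_i+x_j)$ and the trivial summand contributes $1$, so $w(\su(\zeta))=\prod_{i<j}(1+\rho_2(x_i+x_j))$. A short symmetric-function expansion mod $2$ collapses this product to $1+\rho_2(c_1^2+c_2)+\rho_2(c_1c_2+c_3)$, with no terms in degrees $2$ or $8$. Reading off the homogeneous pieces gives $w_2(\su(\zeta))=0$, $w_4(\su(\zeta))=\rho_2(c_1^2+c_2)$ and $w_6(\su(\zeta))=\rho_2(c_1c_2+c_3)$; in particular $w_2=0$, so $\su(\zeta)$ is spin (as is also clear from the lift $\U(3)\to\Spin(8)$ noted above).

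The Pontrjagin classes are then routine. Since $\Cc\otimes_\Rr(L_iL_j^*)_\Rr=L_iL_j^*\oplus L_jL_i^*$, one has $p_1((L_iL_j^*)_\Rr)=(x_i-x_j)^2$, whence $p(\su(\zeta))=\prod_{i<j}(1+(x_i-x_j)^2)$. Expanding the elementary symmetric functions of the three classes $(x_i-x_j)^2$ in terms of $c_1,c_2,c_3$ yields $p_1=2(c_1^2-3c_2)$ and $p_2=c_1^4-6c_1^2c_2+9c_2^2$. For the spin class I would use the relation $2q_1=p_1$ together with the fact that $H^4(B\U(3);\,\Zz)$ is torsion free: division by $2$ is then unambiguous and gives $q_1(\su(\zeta))=c_1^2-3c_2$ exactly.

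The hard part will be the Euler class. Complexifying is not enough, since $\Cc\otimes_\Rr\su(\zeta)=\End_0(\zeta)$ has vanishing top Chern class and this only yields $e(\su(\zeta))^2=0$, which is automatic in $H^{16}$ of an $8$-manifold. Instead I would read $e$ off the torus decomposition directly: the trivial $\Rr^2$ Cartan summand forces $e(\su(\zeta))=e(\Rr^2)\cdot\prod_{i<j}e((L_iL_j^*)_\Rr)=0$ on the split bundle, and injectivity of the restriction to $BT^3$ then gives $e(\su(\zeta))=0$ in general. Equivalently, the degree-$8$ part of $H^*(B\PU(3);\,\Qq)$ is spanned by $q_1^2$, so $e$ is a priori a rational multiple of $q_1^2$, and evaluation on a split bundle shows the multiple is zero.
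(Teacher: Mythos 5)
Your proposal is correct and takes essentially the same approach as the paper: the paper's proof likewise invokes the splitting principle (calculating in the torsion-free cohomology of $B\U(3)$) to reduce to $\zeta=\mu_1\oplus\mu_2\oplus\mu_3$ and uses the identical weight decomposition $\su(\zeta)=\i\Rr^2\oplus\bigl((\bar\mu_1\otimes\mu_2)\oplus(\bar\mu_1\otimes\mu_3)\oplus(\bar\mu_2\otimes\mu_3)\bigr)$, declaring the remaining computations straightforward. Your write-up simply carries out those routine expansions that the paper omits, including the correct observations that $e=0$ comes from the trivial Cartan summand and that $q_1$ is pinned down by $2q_1=p_1$ in a torsion-free ring.
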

\begin{proof}
We may calculate in the torsion-free cohomology of $B\U (3)$ and
so reduce to the case that $\zeta = \mu_1\oplus\mu_2\oplus
\mu_3$ is a sum of $3$ complex line bundles. In that case we
have the explicit description (using a bar for the complex conjugate):
$$
\su (\zeta ) = \i\Rr^2 \oplus \left((\bar{\mu_1}\otimes\mu_2)
\oplus (\bar{\mu_1}\otimes\mu_3)
\oplus (\bar{\mu_2}\otimes\mu_3)\right)
$$
(as a real bundle).
The computations are then straightforward. 
\end{proof}

The standard method leads to necessary and sufficient conditions for
reduction of the structure group.
\begin{prop}\label{6A}
Let $\xi$ be an oriented $8$-dimensional real vector bundle over $M$ with
$w_2(\xi )=0$. 
Consider cohomology classes
$l\in H^2(M;\, \Zz )$, $u\in H^4(M;\, \Zz )$, $v\in H^6(M;\, \Zz )$.
Then there is a $3$-dimensional complex
vector bundle $\zeta$ over $M$ such that $\su (\zeta )\cong \xi$, with
$c_1(\zeta )=l$, $c_2(\zeta )=u$ and $c_3(\zeta )=v$, if and only if
$$
q_1(\xi )=-3u+l^2;\quad 
\rho_2(v-lu)=w_6(\xi );\quad
p_2(\xi )= q_1^2(\xi );\quad e(\xi )=0;
$$
and
$$
\frac{1}{2}(u(u+q_1(\tau M ;\, c)-c^2) +(2l+3c)(v-lu))[M] \equiv 0\,\,
({\rm mod}\,  6).
$$
\end{prop}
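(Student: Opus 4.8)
The plan is to follow the "standard method'' that has already been used to prove Proposition \ref{5B}: reduce the bundle-theoretic reduction problem to the question of existence of a complex bundle $\zeta$ with prescribed Chern classes, and then compare characteristic classes using the classification results of Section 3. Concretely, a reduction of the structure group of $\xi$ to $\U(3)$ with the prescribed Chern data is, by definition, precisely an isomorphism $\su(\zeta)\cong\xi$ for some $3$-dimensional complex bundle $\zeta$ with $c_1(\zeta)=l$, $c_2(\zeta)=u$, $c_3(\zeta)=v$. So I would first invoke Proposition \ref{3F}: the oriented $8$-dimensional bundles $\su(\zeta)$ and $\xi$ are isomorphic if and only if their spin characteristic classes $q_1$, their second Pontrjagin classes $p_2$, and their Euler classes $e$ agree. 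Feeding in the values of these invariants for $\su(\zeta)$ computed in the preceding Lemma immediately yields the first four displayed conditions: $q_1(\xi)=l^2-3u$, $p_2(\xi)=q_1^2(\xi)$ (since $c_1^4-6c_1^2c_2+9c_2^2=(c_1^2-3c_2)^2$), $e(\xi)=0$, together with the Stiefel--Whitney constraint $w_6(\xi)=\rho_2(c_1c_2+c_3)=\rho_2(lu+v)=\rho_2(v-lu)$ (the mod $2$ sign being immaterial).

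Having matched all the real characteristic classes, what remains is exactly the question of \emph{existence} of a complex bundle $\zeta$ over $M$ realizing the prescribed Chern classes $(l,u,v,w)$. By Corollary \ref{2C}, a bundle over $M-B$ with these lower Chern classes exists if and only if ${\rm Sq}^2\rho_2(u)+\rho_2(lu)=\rho_2(v)$; here I would check that this Steenrod-square condition is automatically implied by the conditions already imposed, using the Wu formula relating ${\rm Sq}^2w_4(\xi)$ to $w_6(\xi)$ and $w_2(\xi)w_4(\xi)$ together with $w_2(\xi)=0$, $q_1(\xi)=l^2-3u$ (so $w_4(\xi)=\rho_2(u+l^2)=\rho_2(u+l^2)$) and $\rho_2(v-lu)=w_6(\xi)$. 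Since $w_2(\xi)=0$ this should collapse to an identity, so no new constraint arises at this stage. The top Chern class $c_4(\zeta)=w$ is then unconstrained beyond the integrality/extension condition governed by the spin${}^c$ index.

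The substance of the proposition is therefore the final mod $6$ congruence, and this is where I expect the main obstacle to lie. I would obtain it by applying Proposition \ref{3E} directly: that proposition asserts that the integer
$$
I=\tfrac{1}{2}\bigl(u(u+q_1(\tau M;\, c)-2l^2-3cl-c^2)+(2l+3c)v\bigr)[M]
$$
is well defined mod $6$ and governs, via $w[M]\equiv I\,(\mathrm{mod}\,6)$, whether the bundle over $M-B$ extends to all of $M$. The task is to rewrite this in the stated form and to show that the extension can always be arranged — i.e.\ that $w$ can be chosen freely — so that the \emph{only} surviving arithmetic constraint is the vanishing of the relevant invariant mod $6$. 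The delicate points are bookkeeping: reconciling the $\su(\zeta)$-normalization (where $c_1(\zeta)=l$ is the \emph{actual} first Chern class, not a spin${}^c$ twist) with the formula in \ref{3E}, and absorbing the $-2l^2$ and $2l\cdot$ terms so that $I$ takes the displayed shape $\tfrac12(u(u+q_1(\tau M;\,c)-c^2)+(2l+3c)(v-lu))[M]$. I would verify this by expanding $(2l+3c)(v-lu)=(2l+3c)v-(2l+3c)lu$ and checking that the extra $-2l^2u-3clu$ exactly accounts for the difference between the two bracketed expressions, so that the two versions of $I$ agree as integers (not merely mod $6$). Finally, I would note that the mod $3$ part of the congruence holds automatically — by the same reasoning as in \ref{4B}, since $\su(\zeta)\oplus\su(\zeta)$ carries a complex structure — so the genuine content is $2$-primary, consistent with the statement.
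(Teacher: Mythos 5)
Your skeleton coincides with the paper's own proof in most respects: computing the characteristic classes of $\su (\zeta )$ from the preceding Lemma, matching them against $\xi$ via Proposition \ref{3F}, observing that the Steenrod-square condition of Corollary \ref{2C} reduces (via ${\rm Sq}^2 w_4(\xi )=w_6(\xi )+w_2(\xi )w_4(\xi )$, $w_2(\xi )=0$ and ${\rm Sq}^2\rho_2(l^2)=0$) to the stated condition $\rho_2(v-lu)=w_6(\xi )$, and checking that the displayed congruence is literally $I\equiv 0$ for the integer $I$ of Proposition \ref{3E} (your expansion of $(2l+3c)(v-lu)$ is correct; the two expressions agree as integers). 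But there is a genuine gap precisely where the congruence must come from. The bundle $\zeta$ you need is $3$-dimensional, hence below the stable range over an $8$-manifold, so Proposition \ref{3E} --- which classifies $4$-dimensional bundles --- does not apply to it directly. The paper's device is to demand a $4$-dimensional bundle as in \ref{3E} with $w=c_4=0$: since $c_4$ is the Euler class of the underlying real $8$-plane bundle, its vanishing over $M$ yields a nowhere-zero section, and the bundle splits as $\zeta\oplus\Cc$ with $\zeta$ $3$-dimensional carrying the prescribed $c_1,c_2,c_3$. Your assertion that ``$w$ can be chosen freely'' is exactly backwards: $w=0$ is \emph{forced} (a $3$-dimensional bundle has vanishing $c_4$), and the stated mod $6$ congruence is nothing other than the realizability of $w=0$, that is, $0=w[M]\equiv I\ \mod{6}$. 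If $w$ really were free, the extension criterion of \ref{3E} would impose no condition at all and your argument would produce no congruence.

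Your closing claim --- that the mod $3$ part of the congruence holds automatically ``by the same reasoning as in (\ref{4B})'', so that the genuine content is $2$-primary --- is false, and it even contradicts the statement you are proving, which asserts a congruence mod $6$, not mod $2$. The doubling argument behind (\ref{4B}) (that $2\xi =\Cc\otimes_\Rr\xi$ carries a complex structure) kills $3$-primary obstructions to a \emph{stable complex structure on $\xi$}; here the congruence obstructs something finer, namely the existence of a complex bundle with the specific Chern data $(l,u,v,0)$, and no doubling trick applies, since stabilizing or doubling $\zeta$ destroys the $3$-dimensionality on which the whole construction turns. This is why the paper, which carefully reduces mod $6$ to mod $2$ in Proposition \ref{4A} and mod $12$ to mod $4$ in Propositions \ref{5B} and \ref{9A} whenever the $3$-primary part is automatic, deliberately retains the mod $6$ congruence here and in the spin/$\SU (3)$ corollary. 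Once you replace ``choose $w$ freely'' by ``force $w=0$'' and delete the mod $3$ remark, your argument agrees with the paper's proof.
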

(It is to be understood in the statement that the expression
in the last line is integral if $\rho_2(v-lu)=w_6(\xi )$.)
\begin{proof}
We require first the existence of a complex vector bundle $\zeta$ as in
Proposition \ref{3E} with $w=0$, so that $\zeta$ admits a nowhere zero 
cross-section.
Then the conditions that $q_1(\xi )=l^2-3u$ and 
$p_2(\xi ) = l^4-6l^2u+9u^2$ ensure that $\su (\zeta )$ and $\xi$ are
stably isomorphic. 
Recall that $\rho_2(q_1(\xi ))=w_4(\xi )$, so that
${\rm Sq}^2\rho_2(u) = {\rm Sq}^2 w_4(\xi ) =w_6(\xi )$,
because $w_2(\xi )=0$.

Finally,
equality of the Euler classes $e(\su (\zeta ))=0=e(\xi )$
is necessary and sufficient for the two bundles to be isomorphic.
\end{proof}
\begin{remark}
If $\mu$ is a complex line bundle, then
$\su (\mu\otimes\zeta )=\su (\zeta )$.
Hence, 
the conditions above must be invariant under the transformation
$\zeta\mapsto \mu\otimes\zeta$, that is:
$l\mapsto l+3m$, $u\mapsto u+2lm+3m^2$, $v\mapsto v+um+lm^2+m^3$,
where $c_1(\mu )=m$.
\end{remark}

The conditions simplify if the manifold $M$ is spin ($c=0$) and
we look for an $\SU (3)$-structure ($l=0$) on $\xi$.
\begin{cor}
Suppose that $M$ is a spin manifold. Then the bundle $\xi$
admits an $\SU (3)$-structure if and only if
$w_6(\xi )$ is the reduction of an integral class,
$e(\xi )=0$, and there is a class $u\in H^4(M;\, \Zz )$ such that
$$
\text{$q_1(\xi )=-3u$,\quad
$p_2(\xi )= 9u^2$ and
$\frac{1}{2}(u(u+q_1(\tau M)))[M]\equiv 0\,\, ({\rm mod}\, 6)$.}
$$
\end{cor}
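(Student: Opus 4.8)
The plan is to derive this Corollary directly from Proposition~\ref{6A} by imposing the two simplifying hypotheses $c=0$ (the manifold $M$ is spin) and $l=0$ (we seek an $\SU(3)$-structure rather than a general $\U(3)$-structure). Since $\su(\mu\otimes\zeta)=\su(\zeta)$, asking for an $\SU(3)$-structure is the same as asking for a $\U(3)$-structure whose determinant line bundle is stably trivial, and the natural representative is $l=c_1(\zeta)=0$; so the reduction amounts to specialising the classifying data in Proposition~\ref{6A} to $l=0$ and $c=0$.

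First I would substitute $l=0$ into the four characteristic-class conditions of Proposition~\ref{6A}. The relation $q_1(\xi)=-3u+l^2$ collapses to $q_1(\xi)=-3u$; the Pontrjagin condition $p_2(\xi)=q_1^2(\xi)$ is unchanged; and $e(\xi)=0$ is retained verbatim. The Stiefel--Whitney condition $\rho_2(v-lu)=w_6(\xi)$ becomes $\rho_2(v)=w_6(\xi)$, which is exactly the requirement that $w_6(\xi)$ be the reduction of an integral class (the integral lift being $v=c_3(\zeta)$); this is why the Corollary phrases it as an existence condition on $v$ rather than exhibiting it explicitly. Next I would set $c=0$ in the final congruence: the term $c^2$ and the summand $3c\,(v-lu)$ both vanish, and with $l=0$ the factor $(2l+3c)$ disappears entirely, so the congruence $\tfrac12(u(u+q_1(\tau M;c)-c^2)+(2l+3c)(v-lu))[M]\equiv0\ (\mathrm{mod}\ 6)$ reduces to $\tfrac12(u(u+q_1(\tau M)))[M]\equiv0\ (\mathrm{mod}\ 6)$, where $q_1(\tau M;0)=q_1(\tau M)$ is the honest spin characteristic class of the tangent bundle available because $M$ is spin.

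The remaining point is to confirm that the Proposition's parenthetical integrality caveat is consistent with the Corollary's phrasing. Under the hypotheses $l=0$, $c=0$ the integrality of $\tfrac12(u(u+q_1(\tau M)))[M]$ follows from $\rho_2(v)=w_6(\xi)$ together with the Wu-formula identity $\mathrm{Sq}^2 w_4(\xi)=w_6(\xi)$ valid for spin $\xi$; this guarantees that the quantity appearing in the congruence is a well-defined integer, as already noted after Proposition~\ref{6A}.

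The main obstacle, such as it is, is not mathematical depth but bookkeeping: one must check that no hidden $l$- or $c$-dependence survives and that the existence quantifier on $u$ and on the integral lift $v$ is correctly propagated. In particular I would verify that dropping $l$ does not lose information --- that is, that any $\SU(3)$-structure really does arise from a $\zeta$ with $c_1(\zeta)=0$ rather than merely $c_1(\zeta)$ torsion or divisible --- but since the invariant $\su(\zeta)$ is insensitive to twisting by a line bundle, choosing the representative $l=0$ is legitimate and the specialisation is immediate.
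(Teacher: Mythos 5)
Your proposal is correct and is precisely the paper's (implicit) derivation: the Corollary is stated as an immediate specialisation of Proposition~\ref{6A} to $l=0$ and $c=0$, and your substitution, the identification of the $v$-condition with the integral liftability of $w_6(\xi)$, and the integrality check via ${\rm Sq}^2 w_4(\xi)=w_6(\xi)$ (using $w_2(\xi)=0$ and $\rho_2(u)=w_4(\xi)$) all match the intended argument. Your closing worry is also resolved correctly, since an $\SU(3)$-structure by definition furnishes $\zeta$ with trivialised determinant, so taking $l=c_1(\zeta)=0$ loses nothing.
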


For the special case in which $\xi =\tau M$, see \cite{FW}
(although the argument there purporting to show that $w_6(M)=0$ 
seems to be incorrect).

\section{Quaternionic bundles}
Modules over a quaternion algebra are understood to be {\it right} modules unless explicit 
reference is made to left multiplication.

Consider a complex line bundle $\lambda$ over $M$, with $c_1(\lambda )=l$.
In order to use a spinor index in $\KO$-theory, we require that
$\rho_2 (l)=w_2(M)$.

Let $\Hh_\lambda$ be the bundle of quaternion algebras associated with
$\lambda$: $\Hh_\lambda = \Cc\oplus\lambda$, where $jz=\bar z j$ for
$j\in S(\lambda )$, $z\in\Cc$, and $j^2=-1$.
An $\Hh_\lambda$-vector bundle is a complex vector bundle by restriction of scalars.
We will say that a complex vector bundle $\zeta$
admits an $\Hl$-structure if it has a compatible $\Hl$-multiplication.
One can show that $\zeta$ has an $\Hl$-structure if and only if 
it has a non-singular skew-symmetric form $\zeta\otimes\zeta
\to \lambda$.
The next result,
which goes back to an idea of Dupont \cite{JLD}, is fundamental.
\begin{lemma}\label{7A}
Let $\beta$ be a $3$-dimensional complex vector bundle over $M-B$ such
that $c_1(\beta )=l$ and $c_3(\beta )=0$.
Then $\beta$ has a nowhere-zero section over $M-B$
and so splits as a direct sum $\beta = \Cc \oplus \mu$,
where $\mu$ is an $\Hh_\lambda$-line bundle.
\end{lemma}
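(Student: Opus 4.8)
The plan is to split the assertion into two parts: first, to produce a nowhere-zero section of $\beta$ over $M-B$, which yields a splitting $\beta\cong\Cc\oplus\mu$ with $\mu$ a complex $2$-plane bundle; and second, to check that $\mu$ carries an $\Hl$-structure. The second part I expect to be essentially formal and would dispose of it at once. Since $c_1(\beta)=l=c_1(\lambda)$ and the $\Cc$-summand contributes nothing to the determinant, $\det\mu\cong\lambda$. A nonsingular skew-symmetric form $\mu\otimes\mu\to\lambda$ is the same thing as a nowhere-zero section of $\Hom(\Lambda^2\mu,\lambda)=(\det\mu)^{-1}\otimes\lambda$, and this line bundle has first Chern class $-l+l=0$, hence is trivial and admits such a section. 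By the criterion recalled just before the statement, a nonsingular form of this kind is exactly an $\Hl$-multiplication on $\mu$, so $\mu$ is an $\Hl$-line bundle. Everything thus reduces to the existence of the section.

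For the section I would argue by obstruction theory. A nowhere-zero section of the complex $3$-plane bundle $\beta$ is a reduction of its structure group from $\U(3)$ to $\U(2)$, that is, a lift through the associated sphere bundle with fibre $\U(3)/\U(2)=S^5$. By (1.3) and the discussion of the splitting, $M-B$ has the homotopy type of a $7$-dimensional complex with $H^8(M-B;\,\Zz)=0$. The fibre $S^5$ is $4$-connected, so a section exists over the $5$-skeleton automatically; the primary obstruction to extending it over the $6$-skeleton is the Euler class $e(\beta)=c_3(\beta)\in H^6(M-B;\,\Zz)$, which vanishes by hypothesis. Hence $\beta$ has a section over the $6$-skeleton. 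As $H^8(M-B;\,\Zz)=0$, the only remaining obstruction lives in $H^7(M-B;\,\pi_6(S^5))=H^7(M-B;\,\Ff_2)$, since $\pi_6(S^5)=\Ff_2$.

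The main obstacle is therefore to show that this secondary obstruction vanishes. I would identify it through the Moore--Postnikov factorisation of $B\U(2)\to B\U(3)$: the first stage is the principal fibration with $k$-invariant $c_3\in H^6(B\U(3);\,\Zz)$, and, because $c_3(\beta)=0$ integrally, the secondary obstruction is the value on $\beta$ of the associated functional operation of ${\rm Sq}^2$-type, well-defined in $H^7(M-B;\,\Ff_2)$ modulo the indeterminacy ${\rm Sq}^2\rho_2(H^5(M-B;\,\Zz))$. To prove it zero I would reduce to a universal computation: in the homotopy exact sequence of $S^5\to B\U(2)\to B\U(3)$ the boundary map $\pi_6(S^5)\to\pi_6(B\U(2))=\pi_5(\U(2))$ is an isomorphism, equivalently $p_*:\pi_6(\U(3))\to\pi_6(S^5)$ vanishes. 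This says that, beyond the primary class $c_3$, there is no independent homotopy-theoretic obstruction to the reduction $\U(3)\to\U(2)$, so once $c_3(\beta)=0$ the secondary class lies in the indeterminacy and vanishes as an obstruction.

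This collapse of the obstruction theory to the single condition $c_3(\beta)=0$ is the crux of the argument and the step I would take most care over; the rest, including the verification of the $\Hl$-structure, is routine.
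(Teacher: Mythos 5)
Your outer strategy matches the paper's: reduce everything to killing the top obstruction $\o\in H^7(M-B;\,\Zz /2)$ coming from $\pi_6(S^5)=\Zz /2$, and dispose of the $\Hh_\lambda$-structure on the complementary $2$-plane bundle $\mu$ formally via the skew form $\mu\otimes\mu\to\Lambda^2\mu\cong\lambda$ (this last part agrees with the closing paragraph of the paper's proof and is fine). The gap is in the step you yourself flag as the crux. The vanishing of $p_*:\pi_6(\U (3))\to\pi_6(S^5)$, equivalently the surjectivity of $\pi_6(\U (2))\to\pi_6(\U (3))$, says only that every $3$-dimensional complex bundle over $S^7$ admits a nowhere-zero section; that is, it tests the second $k$-invariant $k_2\in H^7(E_1;\,\Zz /2)$ of the Moore--Postnikov factorisation of $B\U (2)\to B\U (3)$ on spherical classes alone. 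It does not make $k_2$ vanish, and it does not force $g^*k_2$ into the indeterminacy for an arbitrary lift $g: M-B\to E_1$. In fact $k_2$ is certainly nonzero: restricted to the fibre $K(\Zz ,5)$ of $E_1\to B\U (3)$ it is ${\rm Sq}^2\iota_5\neq 0$ (the first $k$-invariant of $S^5$, which detects $\eta$), so the universal secondary operation is nontrivial, and its value on a given $\beta$ involves terms in the lower-dimensional data of $\beta$ that your argument never controls. ``No independent obstruction beyond $c_3$ over the top cell'' is not the same as ``no obstruction beyond $c_3$ over a general $7$-complex,'' and your inference conflates the two.

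A telling symptom is that your proof uses neither the hypothesis $c_1(\beta )=l$ with $\rho_2(l)=w_2(M)$ nor the fact that $M-B$ is the complement of a disc in a closed $8$-manifold: if your reasoning were valid, the lemma would hold for any $3$-plane bundle with $c_3=0$ over any $7$-complex, and the paper's apparatus would be superfluous. The actual proof, following Dupont's idea, is geometric and index-theoretic. By Poincar\'e duality over $\Ff_2$, $\o =0$ if and only if $(\o\cdot x)[M]=0$ for every $x\in H^1(M;\,\Zz /2)$; one represents $x$ by a real line bundle $\nu$, restricts to the $7$-dimensional zero-locus $N$ of a generic section, and identifies the pairing with the image of the $\KO$-Euler class $\gamma (\beta )$ under the composite of multiplication by the twisted Hopf class $\eta (\nu )$ and the index $\pi_!$ defined by the spin structure on $\tau M-\beta$ --- which exists precisely because $w_2(\tau M-\beta )=w_2(M)+\rho_2(c_1(\beta ))=0$; this is where the standing hypothesis $\rho_2(l)=w_2(M)$ is indispensable. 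A $\Zz /2$-equivariant refinement of this composite lands in the torsion-free group $\KO^{-8}_{\Zz /2}(*;\,-6L)=\Zz$, while $\eta (\nu )$ is a torsion class, and this forces the pairing to vanish. To repair your argument you would need input of this kind (or an explicit formula for the secondary operation together with a proof that it dies on $M-B$); the computation in $\pi_6$ alone cannot deliver it.
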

\begin{proof}
We have to show that the sphere bundle
$S(\beta )$ has a section over $M-B$. The
top obstruction is a class $\o$ in $H^7(M-B;\, \Zz /2)$
($=H^7(M;\,\Zz /2)$), because $\pi_6(S^5)=\Zz /2$.
Consider an element $x$ of $H^1(M;\, \Zz /2)$, represented by a real line
bundle $\nu$. The zero-set of a generic cross-section of $\nu$ is
a $7$-dimensional submanifold $N$ of $M-B$.
It suffices to prove that $\o$ maps to zero in 
$H^7(N;\, \Zz /2)$. For evaluation on the fundamental class $[N]$
gives $(\o \cdot x)[M]\in\Zz /2$.
This is established by a $\KO$-theory Hopf theorem.
It will be convenient, as in \cite{MCC2} and
\cite{MCC3}, to use
a local coefficient notation $\KO^*(M;\, \alpha )$ for the
reduced $\KO$-theory $\rKO^*(M^{\alpha}) $ of the Thom space of
a (virtual) real vector bundle $\alpha$ over $M$.

The composition
$$
\KO^0(N;\, -\beta )  \xrightarrow{\eta (\nu )} \KO^0(N;\, \nu -\beta )
 \xrightarrow{\pi_!} \KO^{-2}(*)=\Zz /2,
$$
where $\eta (\nu )\in \KO^0(N;\, \nu )$ is a twisted Hopf element
(lifting the generator $\eta$ of $\KO^{-1}(*)=\Zz /2$)
and $\pi_!$ is the index determined by the spin structure on
$\tau  M-\beta$, maps the $\KO$-Euler class $\gamma (\beta )$ to
$(\o \cdot x)[M]\in\Zz /2$.
Now there is a $\Zz /2$-equivariant lifting:
$$
\KO^0_{\Zz /2}(N;\, -L\otimes\beta ) \xrightarrow{\eta (\nu )} 
\KO^0_{\Zz /2}(N;\, \nu -L\otimes\beta )
\xrightarrow{\pi_!} \KO^{-8}_{\Zz /2}(*;\, -6L)=\Zz , 
$$
where $L$ is the non-trivial $1$-dimensional real representation of
$\Zz /2$. 
(For more details, see \cite{MCC2}.)
But $\eta (\nu )$ is a torsion class. So the image of
$\gamma (L\otimes\beta )$ in $\Zz$ must be zero. 

Of course, a $2$-dimensional complex bundle $\mu$ with $c_1=l$ admits an
$\Hh_\lambda$-structure given by the exterior product to the determinant
bundle regarded as a skew-symmetric form 
$\mu\otimes\mu\to\Lambda^2\mu\cong \lambda$.
\end{proof}

\section{Reduction from $\U (4)$ to $Sp(2)\times_{\{\pm 1\}} U(1)$}
Let $\zeta$ be a $4$-dimensional complex vector
bundle over $M$. The structure group $U(4)$ of  
bundle $\zeta$ reduces to the subgroup 
$Sp(2)\times_{\{\pm 1\}} U(1)$ if and only if there is a complex line bundle
$\lambda$ over $M$ such that $\zeta$ has an  $\Hh_\lambda$-structure.
For more details see \cite{CCV}.

So consider a complex line bundle $\lambda$ with $c_1(\lambda )=l$ and 
$\rho_2(l)=w_2(M)$. We ask when the structure group of $\zeta$ can be reduced 
to an $\Hh_\lambda$-structure. Simultaneously, we want to characterize the 
Chern classes of such vector bundles.

If an $\Hl$-vector bundle has a nowhere zero section, we can split off a trivial
$\Hl$-line bundle. 
As a first consequence, an $m$-dimensional $\Hh_\lambda$-bundle over a $3$-dimensional space must
be trivial, that is, isomorphic to $\Hh_\lambda^m$, and so has $c_1=ml$,
where $l=c_1(\lambda )$.

Again we can treat the problem in three stages, starting with the stable
question over $M-B$. 
An $\Hh_\lambda$-bundle over $M-B$ can be reduced to dimension $1$ over
 $\Hh_\lambda$ because a higher dimensional bundle has a nowhere zero section.
So a necessary condition for lifting $\zeta$ is that 
$c_3(\zeta -\Hh_\lambda )=0$, that is, 
$$
c_3(\zeta )-lc_2(\zeta )+ l^3=0.
$$
In that case, we show that $(\zeta -\Hh_\lambda )\, |\, M-B$ 
can be reduced to complex dimension $2$ and so it has an 
$\Hh_\lambda$-structure.
Let $\beta$ be a $3$-dimensional complex vector bundle over $M-B$
such that $\beta\oplus\lambda$ is stably isomorphic
to $\zeta$. (Such a bundle always exists.) Note that $c_1(\beta )=l$.
We are assuming that $c_3(\beta )=0$, so that $\beta$ has
a nowhere-zero cross-section over $M-B$ by Lemma \ref{7A}.
This establishes:
\begin{lemma}\label{8A}
A $4$-dimensional complex bundle $\zeta$ has a stable $\Hh_\lambda$-structure 
over $M-B$ if and only if $c_1(\zeta)=2l$ and $c_3(\xi )-lc_2(\xi )+ l^3=0$.
\end{lemma}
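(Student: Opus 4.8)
The plan is to prove the two implications separately. In both directions the engine is the observation, established just above, that over the essentially $7$-dimensional space $M-B$ an $\Hl$-bundle of $\Hl$-dimension greater than one carries a nowhere-zero section and so reduces to $\Hl$-dimension one; the reconstruction of an actual $\Hl$-structure in the sufficiency direction is then supplied by Lemma \ref{7A}.

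For necessity, suppose $\zeta$ has a stable $\Hl$-structure, i.e. $\zeta\oplus\Hl^{k}$ carries an $\Hl$-structure for some $k\ge0$; write $\zeta\oplus\Hl^{k}\cong\zeta'$ as $\Hl$-bundles with $\dim_{\Hl}\zeta'=m$. Comparing complex dimensions gives $4+2k=2m$, i.e. $m=k+2$. Since every $\Hl$-bundle is trivial over the $3$-skeleton, $c_1(\zeta')=ml$, and hence $c_1(\zeta)=c_1(\zeta')-kl=2l$. Over $M-B$ the bundle $\zeta'$, whose unit sphere bundle has fibre $S^{4m-1}$ with $4m-1\ge7$, admits a section, so it splits as $\Hl^{m-1}\oplus\mu$ for a single $\Hl$-line bundle $\mu$. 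Cancelling $\Hl^{k}$ shows that $\zeta-\Hl$ is stably $\mu$, which is only $2$-dimensional over $\Cc$, so $c_3(\zeta-\Hl)=c_3(\mu)=0$. A one-line computation with $c(\Hl)=1+l$ and $c_1(\zeta)=2l$ identifies $c_3(\zeta-\Hl)$ with $c_3(\zeta)-lc_2(\zeta)+l^3$, giving the stated vanishing.

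For sufficiency I would reverse this. Assuming $c_1(\zeta)=2l$ and $c_3(\zeta)-lc_2(\zeta)+l^3=0$, first produce a $3$-dimensional complex bundle $\beta$ over $M-B$ with $\beta\oplus\lambda\cong\zeta$. Such a $\beta$ exists because over the $7$-complex $M-B$ rank-$3$ bundles already surject onto the stable range (as $2\cdot3+1=7$), so $\zeta-\lambda$ is realized by an honest rank-$3$ bundle, while rank-$4$ bundles are in bijection with the stable range (as $2\cdot4\ge7$), so the resulting stable isomorphism $\beta\oplus\lambda\simeq\zeta$ is an actual isomorphism. Multiplicativity of Chern classes gives $c_1(\beta)=c_1(\zeta)-l=l$, and the same degree-$3$ computation now yields $c_3(\beta)=c_3(\zeta)-lc_2(\zeta)+l^3=0$. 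These are exactly the hypotheses of Lemma \ref{7A}, so $\beta$ has a nowhere-zero section and splits as $\beta=\Cc\oplus\mu$ with $\mu$ an $\Hl$-line bundle. Reassembling, $\zeta\cong\beta\oplus\lambda\cong(\Cc\oplus\lambda)\oplus\mu=\Hl\oplus\mu$, a genuine (indeed unstable) $\Hl$-bundle.

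Granting Lemma \ref{7A}, which is the one substantial input and is already in hand, the remaining argument is bookkeeping of Chern classes together with connectivity estimates for the maps $B\U (3)\to B\U (\infty )$ and $B\U (4)\to B\U (\infty )$. The step I would watch most carefully is the construction of $\beta$: one must check both that the virtual bundle $\zeta-\lambda$ is represented by an actual rank-$3$ bundle over $M-B$ and that the resulting rank-$4$ bundle $\beta\oplus\lambda$ is isomorphic to $\zeta$ on the nose rather than merely stably, which is precisely where $\dim (M-B)=7$ sits relative to the stable ranges $6$ and $8$.
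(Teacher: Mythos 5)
Your proof is correct and follows essentially the same route as the paper: for necessity you split off trivial $\Hh_\lambda$-line summands over the essentially $7$-dimensional space $M-B$ to reduce to an $\Hh_\lambda$-line bundle $\mu$ with $c_3(\zeta-\Hh_\lambda)=c_3(\mu)=0$, and for sufficiency you construct a rank-$3$ bundle $\beta$ with $\beta\oplus\lambda$ isomorphic to $\zeta$, check $c_1(\beta)=l$ and $c_3(\beta)=0$, and invoke Lemma \ref{7A} to split $\beta=\Cc\oplus\mu$, exactly as in the text preceding the lemma. Your explicit stable-range bookkeeping ($2\cdot3+1=7$ for surjectivity in rank $3$, $2\cdot4\geq 7$ for bijectivity in rank $4$) merely spells out what the paper compresses into ``such a bundle always exists,'' so there is no substantive difference.
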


The rest of the argument follows the familiar pattern.
The stable $\Hh_\lambda$-bundles are classified by a twisted $K$-group
$\KSp_\lambda^0(M)$, which can be
identified with the $\KO$-group $\rKO^{-2}(M^\lambda )$ of the
Thom space of $\lambda$.
(See \cite{MFA-EGR} pp. 135--136 and \cite{MCC1} Chapter 9
for the twisted $K$-theory.)
To describe the isomorphism, we think of 
$\rKO^{-2}(M^\lambda )$ as the $\KO$-theory with compact supports
$\KO^0(\Cc\oplus\lambda )$ of the total space of the bundle
$\Cc\oplus\lambda =\Hh_\lambda$. The isomorphism 
maps the class $[\mu]\in K\Sp_{\lambda}(M)$ of an $\Hl$-module $\mu$
over $M$ to the class in $\KO(\Cc\oplus\lambda)$
given by the $\Rr$-linear map
$$
\mu\to\mu:\quad x\mapsto xv
$$
over $v\in\Hh_\lambda=\Cc\oplus\lambda$, which is an isomorphism outside the 
(compact) zero-section.

A spin structure on $\tau M+\lambda$ gives an index, or Gysin map
$\KSp_\lambda^0(M)=\rKO^{-2}(M^\lambda )\to \KO^{-4}(*)=\Zz$.
The corresponding splitting of the complex $K$-groups is that
defined by taking the spin${}^c$ structure on $M$ with characteristic class
$c=-l$. 

Now we have a map from the $\KSp_\lambda$-sequence to the complex $K$-theory
giving a commutative diagram
$$
\begin{matrix}
\Zz = \KSp^0_\lambda (M,M-B)&\xrightarrow[\leftarrow - -]{}
&\KSp^0_\lambda (M) &\to &\KSp^0_\lambda (M-B)&\to &0\\
\,\,\downarrow\,\,2\times & & \downarrow && \downarrow && \\
\Zz = \K^0(M,M-B)&\xrightarrow[\leftarrow - -]{}
&\K^0(M) &\to &\K^0 (M-B)&\to &0
\end{matrix}
$$
in which the restriction from the twisted quaternionic to the complex theory
is multiplication by $2$ on the top cell $\Zz$.

Taking the spin${}^c$ structure on $M$ with characteristic class
$c=-l$, for $\zeta$ to admit an $\Hh_\lambda$-structure we, thus, require that
$$
\ind \zeta=(\e^{-l/2}\hat{\mathcal A}(\tau M)\ch (\zeta ))[M]\in 2\Zz ,
$$
since two stably isomorphic $\Hh_\lambda$-bundles of dimension $2$ over
$M$ are isomorphic.
This leads to the following characterization.
\begin{prop}\label{8B}
Let $\lambda$ be a complex line bundle over $M$ with $c_1(\lambda)=l$ and
$\rho_2(l)=w_2(M)$. Consider cohomology classes $u\in H^4(M\ ;\Zz)$
and $w\in H^8(M\ ;\Zz)$. Then there is a $2$-dimensional  
$\Hh_{\lambda}$-vector bundle $\zeta$ over $M-B$ with $c_2(\zeta)=u$ if and 
only if ${\rm Sq}^2\rho_2u=\rho_2(lu-l^3)$. In that case $c_1(\zeta)=2l$,
$c_3(\zeta)=lu-l^3$ and the expression
\begin{equation*}
K=\frac{1}{4}(p_1(\tau M)u+2u^2+l^2(3l^2-p_1(\tau M)-5u))[M]\in \Qq
\end{equation*} 
is an integer. Such a bundle extends to an $\Hh_{\lambda}$-bundle over $M$
with $c_4(\zeta)=w$ if and only if
\begin{equation*}
w[M]\equiv K \ \mod{12}.
\end{equation*}
\end{prop}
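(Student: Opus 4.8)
The plan is to follow the three‑stage pattern of Propositions \ref{3E} and \ref{8A}, reducing the twisted quaternionic problem to the complex spin${}^c$ index taken with characteristic class $c=-l$.

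First I would settle the stable existence over $M-B$. By Lemma \ref{8A} a $4$‑dimensional complex bundle over $M-B$ carries a stable $\Hl$‑structure exactly when $c_1=2l$ and $c_3-lc_2+l^3=0$; hence a rank‑$2$ $\Hl$‑bundle with $c_2=u$ necessarily has $c_1=2l$ and $c_3=lu-l^3$. Conversely, feeding $(c_1,c_2,c_3)=(2l,u,lu-l^3)$ into Corollary \ref{2C} and using $\rho_2(2l)=0$, a complex bundle with these Chern classes exists over $M-B$ precisely when ${\rm Sq}^2\rho_2 u=\rho_2(lu-l^3)$; Lemma \ref{8A} then supplies the stable $\Hl$‑structure, which over $M-B$ is carried by a genuine rank‑$2$ $\Hl$‑bundle by the discussion preceding Lemma \ref{8A}. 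This proves the first assertion, together with the formulae $c_1(\zeta)=2l$ and $c_3(\zeta)=lu-l^3$.

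Next I would compute the index. Since the ${\rm Sq}^2$‑hypothesis reduces mod $2$ to the one just verified, Proposition \ref{3E} extends the $M-B$ bundle to a genuine complex bundle $\zeta$ over $M$ with $c_1=2l$, $c_2=u$, $c_3=lu-l^3$ and some $c_4=w$, so we may work with honest bundles. The key computation is the evaluation, via (\ref{3A}) and (\ref{3B}) with $c=-l$, of
\begin{equation*}
\ind(\zeta-\Hl^2)=(\e^{-l/2}\hat{\mathcal A}(\tau M)(\ch\zeta-\ch\Hl^2))[M],
\end{equation*}
where $\Hl^2=2\Hl$ has Chern classes $(2l,l^2,0,0)$ and $\ch\Hl^2=4+2l+l^2+l^3/3+l^4/12$. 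Because $\ch\zeta-\ch\Hl^2$ begins in degree $4$, only the terms $1,-l/2,l^2/8$ of $\e^{-l/2}$ and $1,-p_1(\tau M)/24$ of $\hat{\mathcal A}$ contribute to the degree‑$8$ part; the universal $p_1^2$‑ and $p_2$‑terms cancel in the difference, which is exactly why $K$ involves only $p_1(\tau M)$. Collecting the surviving terms gives
\begin{equation*}
\ind(\zeta-\Hl^2)=\frac{1}{6}(K-w[M]).
\end{equation*}
As $\zeta$ and $\Hl^2$ are genuine bundles the left‑hand side lies in $\Zz$, whence $K\equiv w[M]\ \mod{6}$ and in particular $K\in\Zz$.

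Finally I would invoke the criterion established just before the statement: a complex bundle with these Chern data admits an $\Hl$‑structure if and only if $\ind\zeta\in 2\Zz$. Writing $\ind\zeta=\ind\Hl^2+\frac{1}{6}(K-w[M])$ and noting that $\ind\Hl^2=2\,\ind\Hl\in 2\Zz$, the condition $\ind\zeta\in 2\Zz$ is equivalent to $\frac{1}{6}(K-w[M])\in 2\Zz$, that is, to $w[M]\equiv K\ \mod{12}$. Conversely, when $w[M]\equiv K\ \mod{12}$ one has $w[M]\equiv K\ \mod 6$, so Proposition \ref{3E} furnishes a complex bundle over $M$ with $c_4=w$; by the criterion it carries an $\Hl$‑structure restricting to the given one over $M-B$, and, since stably isomorphic rank‑$2$ $\Hl$‑bundles over $M$ are isomorphic, this realizes the required extension. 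The main obstacle is purely the degree‑$8$ Chern‑character bookkeeping verifying the displayed identity for $\ind(\zeta-\Hl^2)$; the conceptual factor of $2$ between the twisted quaternionic and complex theories is already packaged in the criterion $\ind\zeta\in 2\Zz$, and it is precisely this factor that promotes the complex mod‑$6$ congruence of Proposition \ref{3E} to the quaternionic mod‑$12$ congruence here.
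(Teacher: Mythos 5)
Your proposal is correct and takes essentially the same route as the paper's proof: reduction to $K$-theory via the stable range, Corollary \ref{2C} together with Lemma \ref{8A} over $M-B$, the criterion that extensions with $\Hl$-structure are exactly the classes with even spin${}^c$ index for $c=-l$, and the key identity $\ind\zeta-\ind\Hl^2=\frac{1}{6}(K-w[M])$, which your Chern-character bookkeeping (including the vanishing of the degree-$8$ $\hat{\mathcal A}$-term against the dimension-$0$ part of the difference) verifies correctly. The paper's proof is simply a terser statement of these same steps.
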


\begin{proof}
A complex vector bundle over $M$ of dimension $4$ lies in the stable range, and so 
the problem is $K$-theoretic. Over $M-B$ the solution is given by Corollary \ref{2C}
and Lemma \ref{8A}. Now elements $y\in \K^0(M)$ having $\Hh_{\lambda}$-structure
and extending a given element in $\K^0(M-B)$ are classified by even integers
$\ind{y}$. One can compute that
\begin{equation*}
\ind{\zeta}-\ind{\Hh_{\lambda}^2}=\frac{1}{6}(K-w[M]),
\end{equation*}
and this completes the proof.
\end{proof}

As immediate corrollaries we get
\begin{cor}\label{8C}
Let $\xi$ be a $4$-dimensional complex bundle over $M$,
and let $\lambda$ be a complex line bundle,
with $c_1(\lambda )=l$, such that $\rho_2(l)=w_2 (M)$.
Then $\xi$ admits an $\Hh_\lambda$-structure if and only if
\begin{equation*}
c_1(\xi )=2l;\qquad 
c_3(\xi )-lc_2(\xi )+ l^3=0
\end{equation*}
and the expression
\begin{equation*}
K=\frac{1}{4}(p_1(\tau M)c_2(\xi )+2c_2^2(\xi )+
l^2(3l^2-p_1(\tau M)-5c_2(\xi)))[M]\in \Qq,
\end{equation*}
which is always  an integer (if the conditions above hold), satisfies
\begin{equation*}
K\equiv c_4(\xi)[M]\ \mod4.
\end{equation*}
\end{cor}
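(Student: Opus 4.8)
The plan is to treat this as a corollary of Proposition \ref{8B} by the standard three-stage pattern, the only real work being a $\mod 6$ bookkeeping argument that sharpens the $\mod{12}$ congruence of Proposition \ref{8B} to the stated $\mod 4$ one. First I would observe that a $4$-dimensional complex bundle over the $8$-manifold $M$ lies in the stable range, so that $\xi$ admits an $\Hl$-structure if and only if its class in $\K^0(M)$ lifts through the map $\KSp^0_\lambda (M)\to\K^0(M)$ of the comparison diagram in Section 8. By exactness of the rows there, such a lift exists precisely when (a) the restriction $\xi\,|\,M-B$ carries a stable $\Hl$-structure and (b) the residual top-cell obstruction vanishes. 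Stage (a) is settled directly by Lemma \ref{8A}: it holds if and only if $c_1(\xi )=2l$ and $c_3(\xi )-lc_2(\xi )+l^3=0$, which are exactly the first two displayed conditions (and so are necessary).

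Next I would handle the top cell. Since the top-cell map $\KSp^0_\lambda (M,M-B)=\Zz\to\K^0(M,M-B)=\Zz$ is multiplication by $2$, stage (b) amounts, with the spin${}^c$ structure $c=-l$, to the condition $\ind\xi\in 2\Zz$; indeed, as noted in the proof of Proposition \ref{8B}, among the complex bundles extending a fixed bundle over $M-B$ the ones carrying an $\Hl$-structure are exactly those of even index. The universal Chern-character computation of that proof gives, for any complex bundle with the Chern data above, $\ind\xi-\ind\Hl^2=\frac{1}{6}(K-c_4(\xi )[M])$, where $K$ is the stated expression with $u=c_2(\xi )$. Because $\ind\xi$ and $\ind\Hl^2$ are both integers, $n:=\frac{1}{6}(K-c_4(\xi )[M])$ is automatically an integer, which is the same as saying $K\equiv c_4(\xi )[M]\mod 6$; and since $\ind\Hl^2$ is even, the condition $\ind\xi\in 2\Zz$ reduces to $n$ being even. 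As $6n\equiv 0\mod 4$ if and only if $n$ is even, this is exactly $K\equiv c_4(\xi )[M]\mod 4$. The integrality of $K$ when the first two conditions hold is part of Proposition \ref{8B}.

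The step I expect to be the main obstacle — or at least the one that must be got right — is precisely this passage from the $\mod{12}$ congruence of Proposition \ref{8B} to the weaker-looking $\mod 4$ congruence. The point is that Proposition \ref{8B}'s $\mod{12}$ governs which values $w$ of $c_4$ are realised by $\Hl$-bundles as $w$ ranges freely over $\Zz$, whereas for a given complex bundle $\xi$ the number $c_4(\xi )[M]$ already lies in the $\mod 6$ progression forced by integrality of its complex spin${}^c$ index; relative to that automatic $\mod 6$ congruence the conditions $\mod{12}$ and $\mod 4$ coincide, so only the residual parity survives. This is the exact analogue here of the `automatically $\mod 3$' simplifications used in the proofs of Propositions \ref{4A} and \ref{5B}, and once the elementary modular arithmetic is arranged as above the corollary follows at once.
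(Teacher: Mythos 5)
Your proposal is correct and is exactly the deduction the paper intends: Corollary \ref{8C} is stated there as an immediate consequence of Lemma \ref{8A} and Proposition \ref{8B}, and your argument fleshes out precisely that route (stable $\Hh_\lambda$-structure over $M-B$ via the two Chern-class conditions, then the even-index criterion on the top cell). In particular your key bookkeeping step --- that integrality of $\ind\xi-\ind\Hh_\lambda^2=\frac{1}{6}(K-c_4(\xi)[M])$ makes the congruence $K\equiv c_4(\xi)[M]\ (\mathrm{mod}\ 6)$ automatic, so that the $(\mathrm{mod}\ 12)$ condition of Proposition \ref{8B} collapses to the stated $(\mathrm{mod}\ 4)$ one --- is exactly why the paper can pass from $12$ to $4$, and your use of the evenness of $\ind\Hh_\lambda^2$ is the same implicit ingredient.
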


\begin{cor}\label{8D}
Let $\lambda$ be a complex line bundle over $M$ with $c_1(\lambda )=l$
and $\rho_2(l)=w_2(M)$. Let $u\in H^4(M;\, \Zz )$. Then there is an
$\Hh_\lambda$-line bundle $\eta$ over $M$ with ($c_1(\eta )=l$ and)
$c_2(\eta )=u$ if and only if ${\rm Sq}^2\rho_2(u)=\rho_2(lu)$ and
the integer
$$
\frac{1}{4}(p_1(\tau M)u +2u^2 -l^2u)[M]\equiv 0\ \mod{12}.
$$
\end{cor}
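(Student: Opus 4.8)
The plan is to derive Corollary \ref{8D} from Proposition \ref{8B} by relating the $\Hl$-line bundle $\eta$ to the $2$-dimensional $\Hl$-bundle $\zeta=\eta\oplus\Hl$. An $\Hl$-line bundle is a $2$-dimensional complex bundle carrying a non-singular skew form $\eta\otimes\eta\to\lambda$, so $\Lambda^2\eta\cong\lambda$ and $c_1(\eta)=l$ automatically; write $c_2(\eta)=u$. Since $\Hl=\Cc\oplus\lambda$ has $c_1(\Hl)=l$ and $c_2(\Hl)=0$, the sum $\zeta=\eta\oplus\Hl$ has
$$
c_1(\zeta)=2l,\quad c_2(\zeta)=l^2+u,\quad c_3(\zeta)=lu,\quad c_4(\zeta)=u\cdot c_2(\Hl)=0 .
$$

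The key reduction I would prove is the equivalence: an $\Hl$-line bundle $\eta$ over $M$ with $c_2(\eta)=u$ exists if and only if there is a $2$-dimensional $\Hl$-bundle $\zeta$ over $M$ with $c_2(\zeta)=l^2+u$ and $c_4(\zeta)=0$. One implication is the construction $\zeta=\eta\oplus\Hl$ recorded above. For the converse, observe that $c_4(\zeta)$ is the Euler class of the underlying oriented real $8$-plane bundle; over the $8$-manifold $M$ this is the only, and therefore complete, obstruction to a nowhere-zero section, so $c_4(\zeta)=0$ yields one. Because $\Hl$ is a bundle of division algebras, such a section spans a free rank-one $\Hl$-submodule, necessarily trivial, and $\zeta$ splits as $\Hl\oplus\eta$ with $c_2(\eta)=c_2(\zeta)-l^2=u$.

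It then remains to apply Proposition \ref{8B} with $u'=l^2+u$ and $w=0$. The stable existence condition ${\rm Sq}^2\rho_2 u'=\rho_2(lu'-l^3)$ simplifies to the one in the Corollary: the right-hand side is $\rho_2(l(l^2+u)-l^3)=\rho_2(lu)$, while ${\rm Sq}^2\rho_2(l^2)=({\rm Sq}^1\rho_2 l)^2=0$ since $\rho_2 l$ is a reduction and so has trivial Bockstein, leaving ${\rm Sq}^2\rho_2 u=\rho_2(lu)$. Substituting $u'=l^2+u$ into the integer $K$ of Proposition \ref{8B} cancels the purely $l$ terms and gives $K=\frac{1}{4}(p_1(\tau M)u+2u^2-l^2u)[M]$, whose integrality is inherited from that proposition. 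Finally, since $H^8(M;\,\Zz)\cong\Zz$, the constraint $c_4(\zeta)=0$ is exactly $w=0$, so the extension condition $w[M]\equiv K\ \mod{12}$ becomes $\frac{1}{4}(p_1(\tau M)u+2u^2-l^2u)[M]\equiv 0\ \mod{12}$, as required.

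I expect the main obstacle to be the converse half of the equivalence: confirming that the vanishing of $c_4(\zeta)$ is precisely what splits off a trivial $\Hl$-summand. This relies on two points worth stating carefully --- that over the $8$-dimensional $M$ the top Chern class is the sole obstruction to a nowhere-zero section, and that a single non-vanishing section generates a \emph{free} rank-one $\Hl$-module because every fibre of $\Hl$ is a division algebra. The accompanying Steenrod-square and characteristic-number calculations are then routine, the only input being ${\rm Sq}^1\rho_2 l=0$.
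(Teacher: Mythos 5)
Your proof is correct and follows essentially the same route as the paper: form $\zeta=\eta\oplus\Hh_\lambda$ with $c_1=2l$, $c_2=u+l^2$, $c_3=lu$, $c_4=0$, apply Proposition \ref{8B}, and for the converse use the vanishing of the Euler class $c_4(\zeta)$ to split off a trivial $\Hh_\lambda$-summand. The only difference is that you spell out the Steenrod-square simplification and the substitution $u'=u+l^2$ in $K$, which the paper leaves implicit; both computations are accurate.
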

\begin{proof}
If $\eta$ is such an $\Hh_\lambda$-line bundle, then
$\zeta = \eta \oplus\Hh_\lambda$ is a $4$-dimensional complex vector
bundle with $c_1(\xi )=2l$, $c_2(\xi )=u+l^2$, $c_3(\xi )=lu$,
$c_4(\xi )=0$.  Conversely, if a $4$-dimensional complex vector bundle
$\zeta$ has these Chern classes and admits an $\Hh_\lambda$-structure,
then, since its Euler class is zero, it splits as a direct sum of
an $\Hh_\lambda$-line and a trivial $\Hh_\lambda$ summand.
\end{proof}

\section{Reduction to $\Spinc (4)$ and $\Spinc (3)$}
Again let $\lambda$ be a complex line bundle over $M$ with
$c_1(\lambda )=l$ and $w_2(M)=\rho_2(l)$. 
We shall use the results of the last section on $\Hh_\lambda$-line
bundles to investigate the existence of $\Spinc (4)$ and
$\Spinc (3)$-structures on an $8$-dimensional vector bundle $\xi$.

Since $\Spinc (4)=(Sp(1)\times Sp(1)\times U(1))/\{\pm(1,1,1)\}$ any 
$4$-dimensional 
real vector bundle $\eta$ over $M$ having a $\Spinc (4)$-structure
with characteristic class $l$ can be expressed as
$$
\eta =\Hom_{\Hh_\lambda}(\zeta_-,\zeta_+)
=\zeta_+\otimes_{\Hh_\lambda} \zeta_-^*,
$$
where $\zeta_+$ and $\zeta_-$ are right $\Hh_\lambda$-line bundles over $M$
and $\zeta_-^*=\Hom_{\Hl}(\zeta_-,\Hl)$ is considered as a left 
$\Hl$-line bundle. 

We may also think of $\eta$ as the real form of the complex bundle 
$\zeta_+\otimes_{\Cc}\zeta_-^*$
with the real structure given by the symmetric $\Cc$-valued bilinear form
arizing as the tensor product of a $\lambda$-valued skew-symmetric form
on $\zeta_+$ and a $\lambda^*$-valued skew-symmetric form on $\zeta_-^*$.
\begin{lemma}\label{11A}
The characteristic classes of $\eta$ are:
$$
\begin{matrix}
w_2(\eta ) = \rho_2(l);\qquad & w_4(\eta )=w_4(\zeta_+)+w_4(\zeta_-) ;\\
q_1(\eta ;\, l)=-(c_2(\zeta_+)+c_2(\zeta_-)) ;\qquad &
e(\eta )=c_2(\zeta_+)-c_2(\zeta_-),
\end{matrix}
$$
with the appropriate choice of orientation.
\end{lemma}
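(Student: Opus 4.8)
The plan is to pass to complexifications and compute by the splitting principle, in the spirit of the calculations for $\su (\zeta )$ and of Lemma \ref{5A}. The starting point is the identification, built into the construction of $\eta$, of $\Cc\otimes_\Rr\eta$ with the complex rank-$4$ bundle $\zeta_+\otimes_\Cc\zeta_-^*$: since $\eta$ is the bundle of real points of $\zeta_+\otimes_\Cc\zeta_-^*$ for the stated real structure, every Pontrjagin class and the Euler class of $\eta$ can be read off from the Chern classes of $\zeta_+\otimes_\Cc\zeta_-^*$ by means of the formulae $p_1(\eta )=-c_2(\Cc\otimes_\Rr\eta )$, $p_2(\eta )=c_4(\Cc\otimes_\Rr\eta )$ recalled in (\ref{3D}).

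First I would reduce to torsion-free cohomology and apply the splitting principle to the underlying complex bundles $\zeta_\pm$. The $\Hl$-structure on each $\zeta_\pm$ provides a skew form $\zeta_\pm\otimes_\Cc\zeta_\pm\to\lambda$, so that $\Lambda^2_\Cc\zeta_\pm\cong\lambda$ and $c_1(\zeta_\pm )=l$. Writing the Chern roots of $\zeta_+$ as $x_1,x_2$ and those of $\zeta_-$ as $y_1,y_2$, one has the constraints $x_1+x_2=l=y_1+y_2$, together with $x_1x_2=c_2(\zeta_+)$ and $y_1y_2=c_2(\zeta_-)$; the Chern roots of $\zeta_+\otimes_\Cc\zeta_-^*$ are then the four classes $x_i-y_j$.

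Next I would extract $q_1$, $w_2$ and $w_4$, which are routine. Summing the second elementary symmetric function of the roots $x_i-y_j$ and using $p_1(\eta )=-c_2(\Cc\otimes_\Rr\eta )$ gives $p_1(\eta )=l^2-2(c_2(\zeta_+)+c_2(\zeta_-))$; the relation $2q_1(\eta ;\,l)=p_1(\eta )-l^2$ then yields $q_1(\eta ;\,l)=-(c_2(\zeta_+)+c_2(\zeta_-))$. The equality $w_2(\eta )=\rho_2(l)$ simply records that the given $\Spinc (4)$-structure has characteristic class $l$, and $w_4(\eta )=\rho_2(q_1(\eta ;\,l))=w_4(\zeta_+)+w_4(\zeta_-)$ then follows from the standard reduction $\rho_2 q_1=w_4$ together with $w_4(\zeta_\pm )=\rho_2(c_2(\zeta_\pm ))$ for a rank-$2$ complex bundle.

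The Euler class is the delicate point, and I expect it to be the main obstacle. Because $x_1+x_2=y_1+y_2=l$, the four roots fall into two pairs summing to zero, namely $\{x_1-y_1,\,x_2-y_2\}$ and $\{x_1-y_2,\,x_2-y_1\}$; writing these as $\{a,-a\}$ and $\{b,-b\}$ exhibits $\Cc\otimes_\Rr\eta$ as the complexification of an oriented rank-$4$ bundle, for which the Euler class is the Pfaffian $\pm ab$ (with $p_2(\eta )=a^2b^2$). Expanding $ab=(x_1-y_1)(x_1-y_2)$ and symmetrizing in $x_1\leftrightarrow x_2$, which is legitimate since $x_1+x_2=l$, collapses the product to $c_2(\zeta_-)-c_2(\zeta_+)$, so that with the appropriate choice of orientation $e(\eta )=c_2(\zeta_+)-c_2(\zeta_-)$. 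The two things to pin down carefully are that the prescribed real structure is exactly the one pairing the Chern roots as above, so that the $\SO (4)$-formula $e=\pm ab$ gives the honest Euler class rather than merely a square root of $p_2$, and the resulting overall sign. I would settle both by testing the degenerate case $\zeta_-=\Hl$ (so $c_2(\zeta_-)=0$), in which $\eta$ is the underlying oriented real bundle of the complex rank-$2$ bundle $\zeta_+$ and hence $e(\eta )=c_2(\zeta_+)$, confirming the orientation convention.
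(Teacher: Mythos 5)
Your proposal is correct, and it is essentially the paper's computation: both arguments rest on the splitting principle applied to the $\Hh_\lambda$-line bundles $\zeta_\pm$ (whose Chern roots are constrained by $\Lambda^2\zeta_\pm\cong\lambda$) together with the identification $\Cc\otimes_\Rr\eta=\zeta_+\otimes_\Cc\zeta_-^*$. The one real difference is in how the Euler class is reached. The paper, after splitting $\zeta_\pm=\mu_\pm\oplus\lambda\otimes\mu_\pm^*$, identifies $\eta$ itself (not merely its complexification) with the complex rank-$2$ bundle $(\mu_+^*\oplus\lambda^*\otimes\mu_+)\otimes\mu_-$; then $e(\eta)=c_2$ and $w_2(\eta)=\rho_2 c_1$ of that complex bundle are immediate, with no sign ambiguity beyond the orientation convention. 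You instead stay with the rank-$4$ complexification, deduce $e(\eta)=\pm ab$ from $e^2=p_2=(ab)^2$ in an integral domain, and fix the sign by the test case $\zeta_-=\Hh_\lambda$; this is valid (the universal degree-$4$ class is determined up to sign by its square in the torsion-free universal ring, and one nondegenerate specialisation pins the sign down), and you rightly flag that the pairing of roots into $\{a,-a\}$, $\{b,-b\}$ does not by itself identify the real form. The paper's extra step buys a cleaner derivation of both $e(\eta)$ and $w_2(\eta)$ (the latter you obtain by appealing to the $\Spinc(4)$-structure having characteristic class $l$, which is legitimate but slightly less self-contained); your route is marginally longer but requires no knowledge of how the $\Hh_\lambda$-duality realises $\eta$ as a complex bundle.
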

\begin{proof}
Since $c_1(\zeta_+)=c_1(\zeta_-)=l$, by a splitting principle  we may suppose 
that
$\zeta_+ = \mu_+\oplus \lambda\otimes \mu_+^*$ and
$\zeta_- = \mu_-\oplus \lambda\otimes \mu_-^*$, where
$\mu_+$  and $\mu_-$ are complex line bundles.
Using the description of $\Cc\otimes_{\Rr}\eta$ as $\zeta_+\otimes_{\Cc}\zeta_-^*$,
we may identify $\eta$ with the complex bundle $(\mu_+^*
\oplus \lambda^*\otimes_{\Cc}\mu_+)\otimes_{\Cc}\mu_-$.  
\end{proof}
\begin{prop}\label{11B}
{\rm (\v Cadek and Van\v zura).}
Let $\xi$ satisfy $w_2(\xi )=w_2(M)=\rho_2(l)$. Then the structure group
of $\xi$ can be reduced to $\Spinc (4)$ with characteristic class $l$
if and only if there exist classes $u_+,\, u_-\in H^4(M;\, \Zz )$
such that the following conditions are satisfied.
\begin{gather*} q_1(\xi ;\, l) = -(u_++u_-); \qquad w_6(\xi)=0;\qquad 
{\rm Sq}^2\rho_2u_+=\rho_2(lu_+);\\
e(\xi)=0;\qquad p_2(\xi )= (u_+-u_-)^2;\\
\frac{1}{4}(u_+(p_1(\tau M)+2u_+-l^2))[M]\equiv 0 \, \, \mod{12};\\
 \frac{1}{4}(u_-(p_1(\tau M)+2u_--l^2))[M]\equiv 0 \, \, \mod{4}.
\end{gather*}
\end{prop}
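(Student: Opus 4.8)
\emph{The plan.} A reduction of the structure group of $\xi$ to $\Spinc (4)$ through $\Spinc (4)\to\SO (4)\subseteq\SO (8)$ is the same thing as an orientation-preserving splitting $\xi\cong\eta\oplus\Rr^4$ in which $\eta$ is a $4$-dimensional bundle carrying a $\Spinc (4)$-structure with characteristic class $l$; and, by the discussion preceding Lemma \ref{11A}, such an $\eta$ is precisely a bundle of the form $\eta=\Hom_{\Hl}(\zeta_-,\zeta_+)$ for right $\Hl$-line bundles $\zeta_+,\zeta_-$ over $M$. I shall therefore set $u_\pm=c_2(\zeta_\pm )$ and organize everything around the two $\Hl$-line bundles, using Corollary \ref{8D} to detect each of them and Proposition \ref{3F} to recognize the splitting. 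For necessity, given a reduction, Lemma \ref{11A} and the additivity of $q_1$ give $q_1(\xi ;\, l)=q_1(\eta ;\, l)=-(u_++u_-)$, $e(\xi )=0$ and $w_6(\xi )=w_6(\eta )=0$, while $p_2(\xi )=p_2(\eta )=e(\eta )^2=(u_+-u_-)^2$ since $p_2=e^2$ for an oriented $4$-plane bundle; and Corollary \ref{8D}, applied to each of $\zeta_+,\zeta_-$, supplies ${\rm Sq}^2\rho_2u_\pm=\rho_2(lu_\pm )$ together with both integrality congruences modulo $12$, which in particular yield the stated condition on $u_+$ and the weaker one on $u_-$.

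\emph{Sufficiency: the two line bundles.} Conversely, assume the listed conditions. The ${\rm Sq}^2$ hypothesis and the congruence modulo $12$ on $u_+$ produce, by Corollary \ref{8D}, an $\Hl$-line bundle $\zeta_+$ with $c_2(\zeta_+)=u_+$. For $\zeta_-$ I must first recover the (unstated) condition ${\rm Sq}^2\rho_2u_-=\rho_2(lu_-)$: from $q_1(\xi ;\, l)=-(u_++u_-)$ we have $\rho_2(u_++u_-)=w_4(\xi )$, and the Wu formula with $w_6(\xi )=0$ gives ${\rm Sq}^2\rho_2(u_++u_-)={\rm Sq}^2w_4(\xi )=w_2(\xi )w_4(\xi )=\rho_2(l(u_++u_-))$; subtracting the identity already in hand for $u_+$ leaves ${\rm Sq}^2\rho_2u_-=\rho_2(lu_-)$, so that $K_-=\frac{1}{4}(u_-(p_1(\tau M)+2u_--l^2))[M]$ is an integer.

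\emph{The crux: the $3$-primary part.} It remains to upgrade the assumed congruence $K_-\equiv 0$ modulo $4$ to one modulo $12$, i.e.\ to show that the $3$-primary part is automatic. Writing $K_\pm=\frac{1}{4}(u_\pm (p_1(\tau M)+2u_\pm -l^2))[M]$ and using the identity $2(u_+^2+u_-^2)=(u_++u_-)^2+(u_+-u_-)^2$, one gets, after substituting $u_++u_-=-q_1(\xi ;\, l)$ and $(u_+-u_-)^2=p_2(\xi )$, the formula $K_++K_-=\frac{1}{4}\bigl(q_1^2(\xi ;\, l)-(p_1(\tau M)-l^2)q_1(\xi ;\, l)+p_2(\xi )\bigr)[M]$. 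Feeding the universal congruence (\ref{4B}) for $\xi$ into the right-hand side collapses $4(K_++K_-)$ to a multiple of $3$ modulo $6$, whence $K_++K_-\equiv 0\ \mod{3}$. Since the hypothesis on $u_+$ forces $K_+\equiv 0\ \mod{3}$, we obtain $K_-\equiv 0\ \mod{3}$, and together with the assumed $K_-\equiv 0\ \mod{4}$ this gives $K_-\equiv 0\ \mod{12}$. Corollary \ref{8D} now furnishes an $\Hl$-line bundle $\zeta_-$ with $c_2(\zeta_-)=u_-$.

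\emph{Conclusion.} Finally put $\eta=\Hom_{\Hl}(\zeta_-,\zeta_+)$. By Lemma \ref{11A} the bundle $\eta\oplus\Rr^4$ has $w_2=\rho_2(l)$, $q_1(\eta\oplus\Rr^4;\, l)=-(u_++u_-)=q_1(\xi ;\, l)$, $e(\eta\oplus\Rr^4)=0=e(\xi )$ and $p_2(\eta\oplus\Rr^4)=(u_+-u_-)^2=p_2(\xi )$, so Proposition \ref{3F} produces an orientation-preserving isomorphism $\xi\cong\eta\oplus\Rr^4$, which is exactly the desired $\Spinc (4)$-reduction. The main obstacle is the $3$-primary computation of the previous paragraph---the reason the congruence on $u_-$ can be relaxed to one modulo $4$; the rest is a direct assembly of Lemma \ref{11A}, Corollary \ref{8D} and Proposition \ref{3F}.
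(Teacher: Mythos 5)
Your proof is correct and follows essentially the same route as the paper: construct the two $\Hh_\lambda$-line bundles via Corollary \ref{8D}, form $\eta=\Hom_{\Hh_\lambda}(\zeta_-,\zeta_+)$, and match characteristic classes using Lemma \ref{11A} and Proposition \ref{3F}. The paper handles the automatic $3$-primary part of the $u_-$ congruence in Remark \ref{11C} by a direct index integrality statement, whereas you deduce it from (\ref{4B}); these are the same underlying fact and your computation checks out.
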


\begin{remark}\label{11C}
The first three conditions imply that
${\rm Sq}^2\rho_2(u_-)=\rho_2(lu_-)$. If the 
conditions
on $u_+$ and $u_-$ are satisfied, the last congruence holds also $\mod{12}$.
The integrality of $\ind (\xi-\Rr^8)\otimes\Cc-\ind (\lambda+\lambda^*-\Cc^2)$
gives
$$
(q_1(\xi;\, l)^2+q_1(\xi;\, l)l^2+p_2(\xi)-q_1(\xi;\, l)p_1(\tau M))[M]
\equiv 0 \, \, \mod{3},
$$
which is the sum of the $\mod{3}$ congruences for $u_+$ and $u_-$.

\end{remark}
\begin{proof}
Given $\Hh_{\lambda}$-line bundles $\zeta_+$ and $\zeta_-$ with $c_2$ equal to 
$u_+$ and $u_-$
respectively, we construct $\eta$ as above. Then $\eta\oplus\Rr^4$
is isomorphic to $\xi$ if and only if $q_1(\eta ;\, l)=q_1(\xi ;\, l)$,
$p_2(\xi )=p_2(\eta )=e(\eta )^2$ and $e(\xi )=e(\eta\oplus\Rr^4)=0$.
The existence of $\zeta_+$ and $\zeta_-$ is guaranteed by Corollary \ref{8D}.
\end{proof}

The group $\Spinc(3)$ is isomorphic to $\Sp (1)\times_{\{\pm 1\}} U(1)$. 
By a similar argument, for
any $3$-dimensional spin$^c$ vector bundle
$\alpha$ over $M$ with $w_2(\alpha )=\rho_2(l)$ there is just one 
$\Hh_{\lambda}$-line vector bundle $\zeta$
such that
$$
\Rr\oplus \alpha =\End_{\Hl}(\zeta)=\zeta\otimes_{\Hh_{\lambda}}\zeta^*.
$$ 
Since $w_4(\alpha)=0$, $q_1(\alpha;l)=2u$ for an 
element $u\in H^4(M;\, \Zz)$. From Lemma \ref{11A} 
and Corollary \ref{8D} 
we get

\begin{prop}\label{12A}
Let $w_2(M)=\rho_2(l)$ and let $u\in H^4(M;\, \Zz )$. Then there is a
$3$-dimensional vector bundle  $\alpha$ over $M$ with $w_2(\alpha)=\rho_2(l)$
and $q_1(\alpha,l )=2u$ if and only if ${\rm Sq}^2\rho_2(u)=\rho_2(lu)$ and
the integer
$$
\frac{1}{4}u(p_1(\tau M) -2u -l^2)[M]\equiv 0\ \mod{12}.
$$
\end{prop}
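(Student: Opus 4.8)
The plan is to realize the desired $3$-dimensional bundle $\alpha$ as the trace-free part of the endomorphism algebra of a single $\Hh_\lambda$-line bundle, thereby reducing the question, exactly as in the $\Spinc(4)$ case of Proposition \ref{11B}, to the existence criterion for $\Hh_\lambda$-line bundles supplied by Corollary \ref{8D}. As recalled just above the statement, a $3$-dimensional spin${}^c$ bundle $\alpha$ with $w_2(\alpha)=\rho_2(l)$ is the same datum as an $\Hh_\lambda$-line bundle $\zeta$, via $\Rr\oplus\alpha=\End_{\Hl}(\zeta)=\zeta\otimes_{\Hl}\zeta^*$; this is precisely the diagonal case $\zeta_+=\zeta_-=\zeta$ of the construction of Section 9. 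Hence the existence of an $\alpha$ with the prescribed $w_2$ and $q_1$ is equivalent to the existence of a suitable $\zeta$, and I only have to decide what ``suitable'' means.

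First I would read off the characteristic classes from Lemma \ref{11A} with $\zeta_+=\zeta_-=\zeta$. This gives $w_2(\Rr\oplus\alpha)=\rho_2(l)$, $e(\Rr\oplus\alpha)=0$, and, crucially, $q_1(\Rr\oplus\alpha;\,l)=-2c_2(\zeta)$. Since the trivial summand contributes nothing to $w_2$ or $q_1$ and $\alpha$ has rank $3$ (so $p_2(\alpha)=0$), we obtain $w_2(\alpha)=\rho_2(l)$ and $q_1(\alpha;\,l)=-2c_2(\zeta)$. Thus the requirement $q_1(\alpha;\,l)=2u$ is met exactly by an $\Hh_\lambda$-line bundle $\zeta$ with $c_2(\zeta)=-u$.

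It then remains to apply Corollary \ref{8D} with its class taken to be $-u$. The condition ${\rm Sq}^2\rho_2(-u)=\rho_2(l(-u))$ is, since $\rho_2$ ignores signs, just ${\rm Sq}^2\rho_2(u)=\rho_2(lu)$; as in Corollary \ref{8D} this is what guarantees the integrality of the relevant rational expression. The integral congruence of Corollary \ref{8D} for $c_2(\zeta)=-u$ reads $\frac{1}{4}(-p_1(\tau M)u+2u^2+l^2u)[M]\equiv 0\ \mod{12}$, which is the negative of the congruence in the statement and so equivalent to it modulo $12$. Combining the two directions of Corollary \ref{8D} with the correspondence above yields both implications at once.

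The step needing most care is the sign bookkeeping: extracting the factor $-2$ from the diagonal specialization $q_1=-(c_2(\zeta_+)+c_2(\zeta_-))$ of Lemma \ref{11A}, and then checking that the mod-$12$ congruence produced by Corollary \ref{8D} matches the stated one after negation. A secondary subtlety, worth a remark, is that $q_1(\alpha;\,l)=2u$ determines $c_2(\zeta)$ only up to $2$-torsion in $H^4(M;\,\Zz)$; the cleanest way to avoid this is to route both directions of the equivalence through the single equality $c_2(\zeta)=-u$, relying on the bijectivity of the correspondence $\alpha\leftrightarrow\zeta$.
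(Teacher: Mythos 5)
Your proposal is correct and follows essentially the same route as the paper's own proof: both pass from $\alpha$ to the $\Hh_\lambda$-line bundle $\zeta$ with $\Rr\oplus\alpha=\zeta\otimes_{\Hh_\lambda}\zeta^*$, read off $q_1(\alpha;\,l)=-2c_2(\zeta)$ from the diagonal case of Lemma \ref{11A}, and apply Corollary \ref{8D} with $c_2(\zeta)=-u$, noting that the resulting congruence is the negative of the one stated. Your closing remark on the $2$-torsion ambiguity in recovering $c_2(\zeta)$ from $q_1(\alpha;\,l)$ is a point the paper's proof passes over silently; it is harmless for the mod $12$ congruence because products involving torsion classes vanish in $H^8(M;\,\Zz)\cong\Zz$.
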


\begin{proof}
The conditions are necessary and sufficient for the existence of an 
$\Hh_{\lambda}$-line bundle $\zeta$ with $c_1(\zeta)=l$ and $c_2(\zeta)=-u$. 
Then $\alpha\oplus\Rr=\zeta\otimes_{\Hh_{\lambda}}\zeta^*$ has the prescribed 
characteristic classes by Lemma \ref{11A}.
\end{proof}

\begin{cor}\label{12B}
Let $\xi$ be an $8$-dimensional real vector bundle over $M$ with 
$w_2(\xi )=w_2(M)=\rho_2(l)$. Then the structure group
of $\xi$ can be reduced to $\Spinc (3)$ with characteristic class $l$
if and only if there exists a class $u\in H^4(M;\, \Zz )$
such that the following conditions are satisfied.
\begin{gather*} q_1(\xi ;\, l) = 2u; \qquad w_6(\xi)=0;\qquad 
{\rm Sq}^2\rho_2u=\rho_2(lu);\\
e(\xi)=0;\qquad p_2(\xi )= 0;\\
\frac{1}{4}(u(p_1(\tau M)-2u-l^2))[M]\equiv 0 \, \, \mod{4}.
 \end{gather*}
\end{cor}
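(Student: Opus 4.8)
The plan is to follow the $\Spinc (4)$ template of Proposition \ref{11B}, specialising it to the case where the two quaternionic line bundles coincide so that $\eta$ becomes the bundle $\End_{\Hl}(\zeta)$ minus a trivial summand. Concretely, I would construct a $3$-dimensional bundle $\alpha$ over $M$ from a single $\Hh_\lambda$-line bundle $\zeta$ via the relation $\Rr\oplus\alpha = \zeta\otimes_{\Hl}\zeta^*$, using Proposition \ref{12A} to control its existence and characteristic classes. The reduction of $\xi$ to $\Spinc (3)$ amounts to expressing $\xi$ in terms of such an $\alpha$, via the isomorphism $\Spinc (3)\cong \Sp (1)\times_{\{\pm 1\}}U(1)$ acting on the $8$-dimensional bundle; I would first identify precisely which $8$-dimensional bundle the $\Spinc (3)$-structure produces, presumably $\alpha\oplus\Rr^5$ or a related sum, so that reducing $\xi$ is equivalent to building the right $\alpha$.

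First I would record, using Lemma \ref{11A} in the degenerate case $\zeta_+=\zeta_-=\zeta$ (so $u_+=u_-=u$), the characteristic classes of the resulting bundle: one finds $q_1=-2u$ with the sign reconciled by orientation to match $q_1(\xi;\, l)=2u$, $e(\eta)=c_2(\zeta_+)-c_2(\zeta_-)=0$, and $p_2(\eta)=(u_+-u_-)^2=0$. This immediately explains the three equations $e(\xi)=0$, $p_2(\xi)=0$, and the form $q_1(\xi;\, l)=2u$ in the statement. The condition $w_6(\xi)=0$ should drop out as in Proposition \ref{11B}, and the constraint $\mathrm{Sq}^2\rho_2 u=\rho_2(lu)$ is exactly the obstruction carried over from Corollary \ref{8D} and Proposition \ref{12A} for the existence of the underlying $\Hl$-line bundle $\zeta$.

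Next I would turn to the single numerical congruence. The $\Hl$-line bundle $\zeta$ with $c_1(\zeta)=l$ and $c_2(\zeta)=-u$ exists, by Proposition \ref{12A}, precisely when $\frac{1}{4}(u(p_1(\tau M)-2u-l^2))[M]\equiv 0\ \mod{12}$. The claimed corollary weakens this to $\mod{4}$. The reason, paralleling Remark \ref{11C}, is that once the stable structure is present the only remaining obstruction to an honest (unstable) reduction is an Euler-class / top-cell comparison, and the $\mod{12}$ condition factors as a $\mod{3}$ part that is automatically satisfied and a $\mod{4}$ part that is the genuine constraint. So the main task is to show that the $\mod{3}$ reduction of $\frac{1}{4}(u(p_1(\tau M)-2u-l^2))[M]$ vanishes automatically under the standing hypotheses, using the integrality of a suitable spin$^c$ index exactly as in the derivation of (\ref{4B}); this is where I would invoke the $3$-primary vanishing argument (there can be no $3$-primary obstruction because $\Cc\otimes_\Rr\xi$-type doubling arguments remove it).

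The main obstacle, then, is the bookkeeping in the passage from the $\mod{12}$ condition of Proposition \ref{12A} to the $\mod{4}$ condition in the corollary, and the accompanying verification that assembling $\zeta\otimes_{\Hl}\zeta^*$ and adding trivial summands yields a bundle that is not merely stably isomorphic but genuinely isomorphic to $\xi$. I expect the cleanest route to be: build $\alpha$ from $\zeta$ by Proposition \ref{12A}, check via Proposition \ref{3F} that $\alpha\oplus\Rr^5$ (with the appropriate trivial summand making it $8$-dimensional) is stably isomorphic to $\xi$ using conditions a), b) on $q_1$ and $p_2$, and then close the argument by matching Euler classes, which both vanish, so that (1.1) upgrades stable isomorphism to genuine isomorphism. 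The $\mod 4$ versus $\mod{12}$ discrepancy is resolved by noting that the existence of $\zeta$ forces the $\mod 3$ congruence automatically, leaving $\mod 4$ as the only new requirement, which is the subtle point to get right.
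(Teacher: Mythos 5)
Your proposal is correct and follows the route the paper intends (the paper states Corollary \ref{12B} without a written proof, as an immediate consequence of Proposition \ref{12A} together with Proposition \ref{3F}): identify a $\Spinc(3)$-reduction with an isomorphism $\xi\cong\alpha\oplus\Rr^5$, read off $q_1(\xi;\,l)=2u$, $p_2(\xi)=0$, $e(\xi)=0$, $w_6(\xi)=0$, invoke Proposition \ref{12A} for the existence of $\alpha$, and observe that the $\bmod\ 3$ part of its congruence is forced by the index integrality of Remark \ref{11C} once $q_1(\xi;\,l)=2u$ and $p_2(\xi)=0$, leaving only the $\bmod\ 4$ condition. The only cosmetic slip is attributing the sign $c_2(\zeta)=-u$ to a choice of orientation ($q_1$ is orientation-independent; it is simply the normalization used in Proposition \ref{12A}), which does not affect the argument.
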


\section{Reduction to $\Sp (2)\times_{\{\pm 1\}} U(1)$ and 
$\Sp (2)\times_{\{\pm 1\}}\Sp (1)$ : a quaternionic structure}
It is straightforward now to give conditions for an $8$-dimensional real
vector bundle $\xi$ over $M$ to admit an $\Hh_\lambda$-structure.
\begin{prop}\label{9A}
{\rm (\v Cadek and Van\v zura).}
Let $\lambda$ be a complex line bundle over $M$ with
$c_1(\lambda )=l$ and $w_2(M)=\rho_2(l)$.
An oriented $8$-dimensional real vector bundle $\xi$ admits 
an $\Hh_\lambda$-structure
if and only if:
\begin{align*}
\text{\rm a)}\ & w_2(\xi )=0\quad\text{and}\quad w_6(\xi )=w_2(M)(w_4(\xi)
+w_2^2(M));\\
\text{\rm b)}\ & \frac{1}{2}(p_2(\xi)-q_1^2(\xi))[M]\equiv \frac{1}{4}
(2q_1^2(\xi)-p_1(\tau M)q_1(\xi)+l^2(p_1(\tau M)-3q_1(\xi)+l^2))[M]\ \mod{4};
\\
\text{\rm c)}\ & 2e(\xi)=p_2(\xi)-q_1^2(\xi). 
\end{align*}
\end{prop}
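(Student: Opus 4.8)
The plan is to realise an $\Hl$-structure on the oriented $8$-dimensional real bundle $\xi$ as the combination of a complex structure and a complex $\Hl$-structure. Reduction of the structure group to $\Sp (2)\times_{\{\pm 1\}}\U (1)$ factors through $\U (4)$, so $\xi$ carries an $\Hl$-structure if and only if there is a $4$-dimensional complex vector bundle $\zeta$ with an $\Hl$-structure in the sense of Section 8 whose underlying oriented real bundle $\zeta_{\Rr}$ is isomorphic to $\xi$. The first step is therefore to combine the classification of complex $\Hl$-bundles (Proposition \ref{8B}) with the classification of oriented real $8$-bundles (Proposition \ref{3F}), which reduces the isomorphism $\zeta_\Rr\cong\xi$ to agreement of the invariants $q_1$, $p_2$ and $e$ (here $w_2(\xi)=0$, so Proposition \ref{3F} is used with $l=0$ and $q_1(\xi)$ denotes the ordinary spin class).

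Next I would determine the Chern classes that such a $\zeta$ is forced to carry. Every $\Hl$-bundle of complex dimension $4$ has $c_1(\zeta)=2l$, so $w_2(\zeta_\Rr)=\rho_2(2l)=0$; matching $\xi$ then demands $w_2(\xi)=0$, the first half of condition a). As $\zeta$ is complex with $c_1=2l$, its real form is spin with $q_1(\zeta_\Rr)=2l^2-c_2(\zeta)$, so agreement of $q_1$ forces $c_2(\zeta)=2l^2-q_1(\xi)$; Lemma \ref{8A} fixes $c_3(\zeta)=lc_2(\zeta)-l^3=l^3-lq_1(\xi)$; and agreement of the Euler class forces $c_4(\zeta)=e(\xi)$. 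Using $p_2(\zeta_\Rr)=c_2^2-2c_1c_3+2c_4$ (as computed in the proof of Proposition \ref{4A}) together with these values, a direct substitution gives $p_2(\zeta_\Rr)=q_1^2(\xi)+2e(\xi)$ for every such $\zeta$; equating this with $p_2(\xi)$ is precisely condition c), $p_2(\xi)-q_1^2(\xi)=2e(\xi)$.

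With the Chern data pinned down, the existence of $\zeta$ is governed by Proposition \ref{8B} applied with $u=c_2(\zeta)=2l^2-q_1(\xi)$ and $w=c_4(\zeta)=e(\xi)$. Its $\mod 2$ constraint ${\rm Sq}^2\rho_2u=\rho_2(lu-l^3)$ becomes, on using $\rho_2(u)=w_4(\xi)$ and the Wu relation ${\rm Sq}^2w_4(\xi)=w_6(\xi)$ (valid since $w_2(\xi)=0$), the identity $w_6(\xi)=w_2(M)(w_4(\xi)+w_2^2(M))$, namely the second half of a). Substituting $c_2=2l^2-q_1(\xi)$ into the integer $K$ of Proposition \ref{8B}, its congruence $w[M]\equiv K\ \mod{12}$ takes the form
\begin{multline*}
e(\xi)[M]\equiv \frac14\bigl(2q_1^2(\xi)-p_1(\tau M)q_1(\xi)\\
+l^2(p_1(\tau M)-3q_1(\xi)+l^2)\bigr)[M]\quad \mod{12};
\end{multline*}
since $e(\xi)=\frac12(p_2(\xi)-q_1^2(\xi))$ by c), the left-hand side is that of b). The $\mod 3$ part of this congruence holds automatically, by the same integrality of a spin${}^c$ index that is used in Propositions \ref{4A} and \ref{5B} (compare (\ref{4B}) and Remark \ref{11C}), leaving exactly the $\mod 4$ statement b). Conversely, once a), b) and c) hold, Propositions \ref{8B} and \ref{3F} yield an $\Hl$-bundle $\zeta$ whose real form is $\xi$, which is the required $\Hl$-structure.

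I expect the main effort to be twofold. The routine but error-prone part is the algebraic identity verifying that $K$, after the substitutions $c_2=2l^2-q_1(\xi)$ and $c_1=2l$, equals the right-hand side of b); I would carry this out in the torsion-free quotient, watching the factor $2$ in $c_1=2l$. The genuine obstacle, exactly as in Propositions \ref{4A} and \ref{5B}, is the reduction of the congruence from $\mod{12}$ to $\mod 4$: one must produce the automatic $\mod 3$ relation from the integrality of the appropriate spin${}^c$ index of $\Cc\otimes_\Rr\xi$ (for instance with characteristic class $c=-l$, as used throughout Section 8), which is where all the $3$-primary information is absorbed. A final point to confirm is that the complex orientation on $\zeta_\Rr$ agrees with the given orientation of $\xi$, so that the identification $c_4(\zeta)=e(\xi)$ carries the correct sign.
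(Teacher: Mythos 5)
Your proposal is correct and follows essentially the same route as the paper: realise the $\Hh_\lambda$-structure via a $4$-dimensional complex $\Hh_\lambda$-bundle $\zeta$ stably isomorphic to $\xi$, read off the forced Chern classes ($c_1=2l$, $c_2=2l^2-q_1(\xi)$, etc.), apply Proposition \ref{8B} together with the matching of $q_1$, $p_2$ and $e$, and dispose of the $3$-primary part of the mod $12$ congruence by the integrality of a suitable spin${}^c$ index (the paper uses $\ind(\xi\otimes\Cc-\Cc^8)-\ind(2(\lambda\oplus\bar\lambda)-\Cc^4)$). The only cosmetic difference is that you derive condition c) by equating $p_2$ after fixing $c_4(\zeta)=e(\xi)$, whereas the paper fixes $2c_4(\zeta)=p_2(\xi)-q_1^2(\xi)$ and reads c) as equality of Euler classes; the content is identical.
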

\begin{proof}
Suppose that there is a 4-dimensional complex $\Hh_{\lambda}$-vector bundle
$\zeta$ with $c_2(\zeta)=u$ and $c_4(\zeta)=w$ stably isomorphic to $\xi$. Then
\begin{align*}
w_2(\xi)&=\rho_2(c_1(\zeta))=\rho_2(2l)=0,\\
u&=-q_1(\xi;2l)=2l^2-q_1(\xi),\\
w_6(\xi)&=\rho_2(c_3(\zeta))=\rho_2(lu-l^3)=w_2(M)(w_4(\xi)+w_2^2(M)),\\
2w&=p_2(\xi)-c_2^2(\zeta)+2c_1(\zeta)c_3(\zeta)=p_2(\xi)-q_1^2(\xi).
\end{align*}

According to Proposition \ref{8B} necessary and sufficient conditions for 
the existence of a stable 
$\Hh_{\lambda}$-structure on $\xi$ are then given by conditions a) and b)
except that the congruence in (\ref{8B}) is required
$\mod{12}$. But the congruence is always satisfied $\mod{3}$ since the 
integrality of $\ind(\xi\otimes \Cc-\Cc^8)-
\ind(2(\lambda\oplus\bar{\lambda})-\Cc^4)$ gives
$$(4q_1^2(\xi)-2p_2(\xi)-q_1(\xi)p_1(\tau M)+l^2p_1(\tau M)-2l^4)[M]\equiv
0\, \, \mod{3}.$$ 
 
Condition c) expresses the equality of the Euler classes.
\end{proof}

We next consider the extension to other twisted quaternionic structures.
Let $\alpha$ be a $3$-dimensional oriented Euclidean
vector bundle over $M$. 
Recall that an oriented $3$-dimensional real inner product space $V$ has
a vector product $\times : V\otimes V\to V$, which can be used to give
$\Rr 1\oplus V$ a quaternionic multiplication with
$(0,x)\cdot (0,y) = (-\langle x,y\rangle, x\times y)$ for $x,\, y\in V$.
Applied to the fibres of $\alpha$, this gives
the $4$-dimensional real vector bundle $\Rr 1\oplus\alpha$ the structure
of a bundle of quaternion algebras.

When $\alpha =\Rr\i\oplus\lambda$, we may identify $\Rr 1\oplus\alpha$ with
the bundle $\Hh_\lambda$ already considered.
We now investigate the more general problem of the existence of
a right $\Rr 1\oplus\alpha$-module 
structure on an $8$-dimensional real vector bundle $\xi$.
This is equivalent to reduction of the structure group to 
$\Sp (2) \times_{\{\pm 1\}}\Sp (1)$. 
To apply the methods of this paper we must require that
$w_2(\alpha)=w_2(M)=\rho_2(l)$.
In that case, the bundle of algebras $\Rr 1\oplus\alpha$ is the 
endomorphism algebra of an $\Hh_\lambda$-line bundle $\zeta$:
$$
\Rr 1\oplus \alpha =  \End_{\Hh_\lambda} (\zeta ).
$$
Notice that 
$$
\End_{\Hh_\lambda}(\Hh_{\lambda}) = \Hh_{\lambda}.
$$

\begin{lemma}\label{13A} Let $\xi$ be an $8$-dimensional real 
vector bundle and let $\zeta$ be an $\Hh_\lambda$-line bundle. 
Then $\xi$ has an $\End_{\Hh_\lambda}(\zeta )$-structure 
if and only if there is a $2$-dimensional  
$\Hh_{\lambda}$-vector bundle $\eta$ such that
$$
\xi\cong\Hom_{\Hh_{\lambda}}(\zeta ,\eta)=
\eta\otimes_ {\Hh_{\lambda}}\zeta^*.
$$
\end{lemma}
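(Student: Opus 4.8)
The plan is to recognise this as a Morita-equivalence statement at the level of bundles of quaternion algebras. Write $A = \End_{\Hh_\lambda}(\zeta)$ for the bundle of algebras in question; as noted just before the lemma it is the quaternion-algebra bundle $\Rr 1\oplus\alpha$. The $\Hh_\lambda$-line bundle $\zeta$ carries commuting actions of $A$ on the left (by the very definition of $A$) and of $\Hh_\lambda$ on the right, so it is an $(A,\Hh_\lambda)$-bimodule, and its dual $\zeta^* = \Hom_{\Hh_\lambda}(\zeta,\Hh_\lambda)$ is correspondingly an $(\Hh_\lambda,A)$-bimodule. Fibrewise $\zeta$ is simply $\Hh$, with $A\cong\Hh$ and $\Hh_\lambda\cong\Hh$ acting by left and right multiplication.

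The key step is to record the two canonical bimodule maps
$$
\zeta^*\otimes_A\zeta \longrightarrow \Hh_\lambda, \qquad
\zeta\otimes_{\Hh_\lambda}\zeta^* \longrightarrow A,
$$
the first given by evaluation $\phi\otimes x\mapsto\phi(x)$ and the second by $x\otimes\phi\mapsto(y\mapsto x\,\phi(y))$. Each is fibrewise the standard identity for the quaternions, hence a fibrewise isomorphism and so an isomorphism of bundles. Together with the natural identification $\eta\otimes_{\Hh_\lambda}\zeta^* \cong \Hom_{\Hh_\lambda}(\zeta,\eta)$, valid because $\zeta$ is a finitely generated projective $\Hh_\lambda$-module (being a vector bundle), these furnish the Morita equivalence between $A$-module bundles and $\Hh_\lambda$-module bundles implemented by the bimodules $\zeta$ and $\zeta^*$.

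Granting this, the forward implication is immediate: if $\xi\cong\Hom_{\Hh_\lambda}(\zeta,\eta)$ then precomposition with the endomorphisms in $A=\End_{\Hh_\lambda}(\zeta)$ makes $\xi$ a right $A$-module, that is, equips $\xi$ with an $\End_{\Hh_\lambda}(\zeta)$-structure. For the converse, suppose $\xi$ carries a right $A$-module structure and set $\eta := \xi\otimes_A\zeta$, a right $\Hh_\lambda$-module. A local computation, in which $\xi\cong A^2$ and $\zeta\cong A$ as a left $A$-module, shows $\eta$ has quaternionic rank $2$, so it is a genuine $2$-dimensional $\Hh_\lambda$-vector bundle of real rank $8$ matching that of $\xi$. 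Then, by associativity of the tensor product and the coevaluation isomorphism above,
$$
\Hom_{\Hh_\lambda}(\zeta,\eta)\cong\eta\otimes_{\Hh_\lambda}\zeta^*
\cong\xi\otimes_A(\zeta\otimes_{\Hh_\lambda}\zeta^*)
\cong\xi\otimes_A A\cong\xi,
$$
as required.

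The only non-formal point, and the step I expect to need the most care, is the passage from the fibrewise quaternionic identities to global bundle isomorphisms over the bundle of algebras $\Hh_\lambda$, whose fibres are the division ring $\Hh$ but which is globally twisted by $\lambda$, so that ordinary module theory over a fixed ring does not apply directly. The resolution is that every map above is natural and is an isomorphism on each fibre, where it reduces to an elementary fact about $\Hh$; a natural fibrewise isomorphism of vector bundles is a bundle isomorphism. Tracking the real ranks throughout is what confirms that $\eta$ is genuinely $2$-dimensional over $\Hh_\lambda$.
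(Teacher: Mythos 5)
Your proof is correct and is precisely the "standard module theory" that the paper's one-line proof invokes: $\zeta$ is an invertible $(\End_{\Hh_\lambda}(\zeta),\Hh_\lambda)$-bimodule, and you have simply written out the Morita equivalence (evaluation and coevaluation isomorphisms, plus the rank count) in full.
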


\begin{proof} This is standard module theory: $\zeta$ is an invertible 
$(\End_{\Hl}(\zeta),\Hl)$-bimodule.
\end{proof}
\begin{lemma}\label{13B}
The characteristic classes of $\xi\ = \eta\otimes_ {\Hh_{\lambda}}\zeta^*$ 
where $c_1(\zeta^*)=-l$, $c_2(\zeta)=-u$ and $c_1(\eta)=2l$ are
\begin{gather*}
w_2(\xi )=0; \qquad 
q_1(\xi)= 2u-c_2(\eta)+2l^2;\\
p_2(\xi) = 2c_4(\eta)+c_2(\eta)(c_2(\eta)-2u-4l^2)+6u^2+6ul^2+4l^2;\\
e(\xi)=c_4(\eta)+c_2(\eta)u-l^2u+u^2.
\end{gather*}
\end{lemma}

\begin{proof}
To compute the characteristic classes we can suppose that $\eta$
is a sum of two $\Hl$-line bundles $\eta_1\oplus\eta_2$ and use Lemma \ref{11A}.
For instance, the computation of the Euler class is as follows:
\begin{align*}
e(\xi)&=e((\eta_1\oplus\eta_2)\otimes_{\Hl}\zeta^*)=
e(\eta_1\otimes_{\Hl}\zeta^*)e(\eta_2\otimes_{\Hl}\zeta^*)\\
&=(c_2(\eta_1)-c_2(\zeta^*))(c_2(\eta_2)-c_2(\zeta^*))
=c_4(\eta)+c_2(\eta)u-l^2u+u^2
\end{align*}
\end{proof}

\begin{prop}\label{13C}
{\rm (See [5], Theorem 8.1).}
Let $w_2(M)=\rho_2(l)$. 
Then an $8$-dimensional oriented vector bundle $\xi$ has
an $\Hh_{\alpha}$-structure with $w_2(\alpha)=\rho_2(l)$ and
$q_1(\alpha;l)=2u$ if and only if $w_2(\xi)=0$ and  
the following conditions are satisfied.
\begin{equation*} p_2(\xi)-q_1^2(\xi)-2e(\xi)=0;\qquad  
w_6(\xi)+w_4(\xi)\rho_2(l)+\rho_2(l^3)=0;
\qquad {\rm Sq}^2\rho_2u=\rho_2(lu);
\end{equation*}
\begin{equation*}
\frac{1}{4}(u(p_1(\tau M)-2u-l^2))[M]\equiv 0 \, \, \mod{12};\\
\end{equation*}
\vskip-1.5\baselineskip
\begin{multline*}
\frac{1}{4}(4q_1^2(\xi)-2p_2(\xi)-q_1(\xi)p_1(\tau M)-3l^2q_1(\xi)+
l^2p_1(\tau M)+l^4\\
+u(20u+10l^2+2p_1(\tau M))-12q_1(\xi)u)[M]\equiv 0 \, \, \mod{4}.
\end{multline*} 
\end{prop}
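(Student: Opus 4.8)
The plan is to reduce Proposition \ref{13C} to the results already established for $\Hh_\lambda$-line bundles and $2$-dimensional $\Hh_\lambda$-bundles. By Lemma \ref{13A}, $\xi$ admits an $\End_{\Hh_\lambda}(\zeta)$-structure exactly when $\xi \cong \eta \otimes_{\Hh_\lambda} \zeta^*$ for some $2$-dimensional $\Hh_\lambda$-bundle $\eta$, where $\zeta$ is a fixed $\Hh_\lambda$-line bundle realising the given $\alpha$ via $\Rr 1 \oplus \alpha = \End_{\Hh_\lambda}(\zeta)$. So the existence problem splits into two pieces: first, the existence of an $\Hh_\lambda$-line bundle $\zeta$ with $c_1(\zeta)=l$ and $c_2(\zeta)=-u$ realising $\alpha$, which by Proposition \ref{12A} (equivalently Corollary \ref{8D}) requires precisely ${\rm Sq}^2\rho_2 u = \rho_2(lu)$ together with the $\mod 12$ congruence $\frac{1}{4}(u(p_1(\tau M)-2u-l^2))[M]\equiv 0$; and second, the existence of a $2$-dimensional $\Hh_\lambda$-bundle $\eta$ making $\eta \otimes_{\Hh_\lambda}\zeta^*$ isomorphic to $\xi$.

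First I would fix such a $\zeta$ (so the first two listed conditions, ${\rm Sq}^2\rho_2 u = \rho_2(lu)$ and the $\mod 12$ congruence, are exactly the conditions of Proposition \ref{12A}) and translate the remaining requirements into conditions on the Chern classes $c_2(\eta)$ and $c_4(\eta)$ of $\eta$ using Lemma \ref{13B}. The isomorphism $\xi \cong \eta \otimes_{\Hh_\lambda}\zeta^*$ is governed by Proposition \ref{3F}: it is equivalent to matching $q_1$, $p_2$ and $e$. From the $q_1$-formula $q_1(\xi)=2u-c_2(\eta)+2l^2$ one solves for $c_2(\eta)$; substituting into the $p_2$- and $e$-formulae of Lemma \ref{13B} and eliminating $c_4(\eta)$ yields the stable-isomorphism constraint $p_2(\xi)-q_1^2(\xi)-2e(\xi)=0$, which is the first displayed condition. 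The mod-$2$ Stiefel–Whitney conditions $w_2(\xi)=0$ and $w_6(\xi)+w_4(\xi)\rho_2(l)+\rho_2(l^3)=0$ come out of reducing the $c_1(\eta)=2l$ and $c_3(\eta)=lc_2(\eta)-l^3$ relations mod $2$, exactly as in the proof of Proposition \ref{9A}.

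The remaining arithmetic obstruction is the genuinely new ingredient. Having determined $c_2(\eta)$ in terms of $u$, $l$ and $q_1(\xi)$, the existence of $\eta$ as an honest $\Hh_\lambda$-bundle over $M$ (not merely over $M-B$) is controlled by Proposition \ref{8B}: the integer $K$ of that proposition must be congruent $\mod 12$ to $c_4(\eta)[M]$, where $c_4(\eta)$ is fixed by the Euler-class equation $e(\xi)=c_4(\eta)+c_2(\eta)u-l^2u+u^2$. I would compute $c_4(\eta)[M]$ from this and substitute $K$ and $c_2(\eta)$, expressing everything in the invariants of $\xi$, $u$ and $l$. This produces the final multi-line $\mod 4$ congruence. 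As in Propositions \ref{9A} and \ref{11B}, the congruence initially emerges $\mod 12$, but the $\mod 3$ part is automatic from the integrality of a suitable spin${}^c$ index difference — here $\ind(\xi\otimes\Cc - \Cc^8) - \ind(\cdots)$ of the sort appearing in Remark \ref{11C} and the proof of Proposition \ref{9A} — so only the $\mod 4$ statement survives.

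The main obstacle will be the bookkeeping in this last step: correctly converting $K \equiv c_4(\eta)[M] \mod 12$ into the stated congruence requires substituting the Lemma \ref{13B} expressions for $c_2(\eta)$ and $c_4(\eta)$, then using $p_1(\xi)=-2q_1(\xi)$ and the relation $p_2(\xi)-q_1^2(\xi)=2e(\xi)$ to simplify, and finally peeling off the automatic $\mod 3$ congruence to land on $\mod 4$. The delicate points are keeping the $l$-, $u$- and $q_1(\xi)$-dependence consistent across the $K$-formula and the Euler-class identity, and verifying that the cross terms combine into the displayed polynomial $4q_1^2(\xi)-2p_2(\xi)-q_1(\xi)p_1(\tau M)-3l^2q_1(\xi)+l^2p_1(\tau M)+l^4+u(20u+10l^2+2p_1(\tau M))-12q_1(\xi)u$. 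I expect no conceptual difficulty beyond what the earlier propositions already supply; the work is entirely in the characteristic-class algebra and in confirming that the $\mod 3$ reduction is forced, so that the two surviving congruences are exactly the $\mod 12$ condition from Proposition \ref{12A} and the $\mod 4$ condition just derived.
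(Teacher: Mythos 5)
Your proposal is correct and is essentially the proof the paper intends (Lemmas \ref{13A} and \ref{13B} are exactly the scaffolding you use): existence of $\zeta$, equivalently $\alpha$, comes from Proposition \ref{12A}; existence of $\eta$ with the forced classes $c_2(\eta)=2u+2l^2-q_1(\xi)$ and $c_4(\eta)=e(\xi)-c_2(\eta)u+l^2u-u^2$ comes from Proposition \ref{8B}; the isomorphism is settled by Proposition \ref{3F}; eliminating $c_4(\eta)$ does yield $p_2(\xi)-q_1^2(\xi)-2e(\xi)=0$, and $4(K-c_4(\eta)[M])$ works out to precisely the displayed degree-$8$ polynomial, with the mod $3$ part automatic from the other conditions. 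Two minor remarks: the paper's concluding Remark also indicates an alternative deduction of Proposition \ref{13C} from Corollary \ref{12D} via the triality automorphism of $\Spin (8)$, and your aside ``$p_1(\xi)=-2q_1(\xi)$'' should read $p_1(\xi)=2q_1(\xi)$ (it is not in fact needed in the computation).
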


Substituting $u=0$ we get the conditions from Proposition \ref{9A}.

\begin{example}
The quaternionic projective space $\mathbb HP^2$, the Grassmannian 
$G_{4,2}(\Cc)$ and the homogeneous space $G_2/SO(4)$ are known to be
quaternionic Kaehler manifolds and therefore the structure groups of 
their tangent bundles can be reduced to $Sp(2)\times_{\{\pm 1\}}Sp(1)$.
Using our results we can say a little bit more.

The tangent bundle $\tau(\Hh P^2)$ has no complex structure (and hence no
$\Hl$-structure), but it has an $\mathbb H_{\alpha}$-structure if and only if
$q_1(\alpha,0)=2ka$, where $k\equiv 1,9$ (mod 24) and $a$ is the generator  of $H^4(\Hh P^2;\Zz)$.

 There are infinitely many complex structures on $\tau (G_{4,2}(\mathbb C))$, but
none of them extends to an $\mathbb H_{\lambda}$-structure.
The tangent bundle admits  $\mathbb H_{\alpha}$-structures with
$q_1(\alpha,0)=2ka^2$, where $k\equiv 1,9$ (mod 12) and $a$ generates $H^2(G_{4,2};\Zz)$.

In the case of $\tau(G_2/SO(4))$ our Proposition \ref{13C} can be used 
only to decide if there is an $\mathbb H_{\alpha}$-structure with 
$w_2(\alpha)=w_2(G_2/SO(4))=0$. And such a structure does not exist.

Consider next the complex manifolds  $V_d=\{(z_0,\cdots,z_5)\in\mathbb CP^5;\
z_0^d+z_1^d+\cdots+z_5^d=0\}$. Among them only $V_2=G_{4,2}(\Cc)$ and $V_6$ have
almost quaternionic structures (see also \cite{CV4}). 
The tangent bundle $\tau(V_6)$, like $\tau(G_{4,2})$,
has infinitely many complex structures, none  being a restriction of an $\mathbb H_{\lambda}$-structure.
But $\tau(V_6)$ is an $\mathbb H_{\alpha}$-module for the bundles $\alpha$
with $q_1(\alpha,0)=2ka^2$, where $k\equiv 1$ (mod 4) and $a$ is a generator 
of $H^2(V_6;\Zz)$.
\end{example}

\section{Reduction to $\Spinc (5)$ : a $3$-field}
Consider the composition
$$
\Sp(2)\times_{\{\pm 1\}} U(1) =\Spinc (5) \to \SO (5) \to \SO (8)
$$
of the canonical map and the standard inclusion.
Suppose that $\lambda$ is a complex line bundle over $M$ and that
$\zeta$ is a $2$-dimensional $\Hh_\lambda$-bundle. Then the
$6$-dimensional complex bundle $\lambda^*\otimes\Lambda^2 \zeta$ 
has a trivial $1$-dimensional summand given by the (dual of the)
skew form $\zeta\otimes\zeta\to\lambda$ and a real structure given by 
a symmetric $\Cc$-valued form on $\lambda^*\otimes\Lambda^2 \zeta$.
So we may write $\lambda^*\otimes _{\Cc}\Lambda^2 \zeta =\Cc\otimes_{\Rr} (\Rr\oplus\eta )$
for a $5$-dimensional real vector bundle $\eta$. 

\begin{lemma} \label{X1}
The characteristic classes of $\eta$ are:
$$
\begin{matrix}
w_2(\eta )=\rho_2(l); \qquad & w_4(\eta )=\rho_2(l^2)+w_4(\zeta ) ;\\
q_1(\eta ;\, l)= l^2-c_2(\zeta ): \qquad &
p_2(\eta ) = q_1^2(\eta;\, l) -4c_4(\zeta ) .
\end{matrix}
$$
\end{lemma}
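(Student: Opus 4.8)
The plan is to imitate the proof of Lemma~\ref{5A}, reducing to the case in which $\zeta$ splits as a sum of $\Hl$-line bundles and then reading off the characteristic classes from the $\Spinc (4)$-computation of Lemma~\ref{11A}. Being a $2$-dimensional $\Hl$-bundle, $\zeta$ has $c_1(\zeta )=2l$ and, as a complex bundle, carries the isomorphism $\zeta\cong\lambda\otimes\zeta^*$ furnished by the non-singular skew form $\zeta\otimes\zeta\to\lambda$. First I would invoke the splitting principle for $\Hl$-bundles (pulling back to the associated bundle of $\Hl$-flags, over which cohomology pulls back injectively) to assume $\zeta=\mu_1\oplus\mu_2$ with $\mu_1,\mu_2$ being $\Hl$-line bundles; write $u_i=c_2(\mu_i)$, so that $c_1(\mu_i)=l$, $c_2(\zeta )=u_1+u_2+l^2$ and $c_4(\zeta )=u_1u_2$.

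In this split situation the computation is direct. Using $\Lambda^2\mu_i=\lambda$ and $\lambda^*\otimes\mu_2\cong\mu_2^*$ one expands
$$
\lambda^*\otimes\Lambda^2(\mu_1\oplus\mu_2)=\Cc^2\oplus(\lambda^*\otimes\mu_1\otimes\mu_2)=\Cc^2\oplus(\mu_1\otimes\mu_2^*),
$$
and $\mu_1\otimes_\Cc\mu_2^*$ is the complexification of the $\Spinc (4)$-bundle $\eta_0=\mu_1\otimes_\Hl\mu_2^*$ of Lemma~\ref{11A} (with $\zeta_+=\mu_1$, $\zeta_-=\mu_2$). Comparing real structures gives $\Rr\oplus\eta\cong\Rr^2\oplus\eta_0$, so $\eta$ is stably $\Rr\oplus\eta_0$; as all four classes in the statement are stable this is enough. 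Lemma~\ref{11A} then yields $w_2(\eta )=\rho_2(l)$, $w_4(\eta )=w_4(\mu_1)+w_4(\mu_2)=\rho_2(u_1+u_2)$ and $q_1(\eta ;\, l)=-(u_1+u_2)$; substituting $u_1+u_2=c_2(\zeta )-l^2$ and recalling $w_4(\zeta )=\rho_2c_2(\zeta )$ reproduces the stated formulae for $w_2$, $w_4$ and $q_1$.

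For the top class I would use $p_2(\eta )=p_2(\eta_0)=e(\eta_0)^2$, which by Lemma~\ref{11A} equals $(u_1-u_2)^2=(u_1+u_2)^2-4u_1u_2=(l^2-c_2(\zeta ))^2-4c_4(\zeta )=q_1^2(\eta ;\, l)-4c_4(\zeta )$. The one point needing care is precisely this degree-$8$ class: the convenient geometric splitting $\zeta\,|\,M-B=\Hl\oplus\mu$ (coming from a nowhere-zero section of the $S^7$-bundle $S(\zeta )$, the only obstruction lying in $H^8(M-B;\,\Zz )=0$) is available only over $M-B$, and so settles $w_2$, $w_4$ and $q_1$, which are detected in degrees $\le 7$, but says nothing about $p_2\in H^8(M)$. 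It is exactly to reach $p_2$ globally that I pass to the flag bundle, where the identity $p_2(\eta )=c_4(\lambda^*\otimes\Lambda^2\zeta )=(l^2-c_2(\zeta ))^2-4c_4(\zeta )$ holds and then descends to $M$ by injectivity. Thus the main obstacle is not any hard computation but ensuring that the top-degree identity is established on a space where $\zeta$ genuinely splits.
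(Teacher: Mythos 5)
Your argument is correct, but it takes a genuinely different route from the paper. The paper's own proof of Lemma~\ref{X1} is a single line: it observes that the $\Spinc (5)$ construction is the special case of the $\Spinc (6)$ construction of Section 5 in which the $4$-dimensional complex bundle already carries an $\Hl$-structure, and simply applies Lemma~\ref{5A} to the $6$-dimensional bundle $\Rr\oplus\eta$; the formulae then drop out on setting $e(\Rr\oplus\eta )=0$ (which also recovers $c_3(\zeta )=lc_2(\zeta )-l^3$), and the $w_4$ statement is just $\rho_2 q_1$. You instead re-derive everything from the $\Spinc (4)$ computation of Lemma~\ref{11A} after splitting $\zeta$ into $\Hl$-line bundles $\mu_1\oplus\mu_2$ -- a splitting principle the paper itself invokes in the proof of Lemma~\ref{13B}, so this is legitimate within its framework -- and your identification $\lambda^*\otimes\Lambda^2\zeta =\Cc^2\oplus (\mu_1\otimes\mu_2^*)$ together with $p_2=e^2$ for the rank-$4$ piece gives exactly the stated classes. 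What the paper's route buys is economy (all the work was already done in Lemma~\ref{5A}); what yours buys is a proof that exploits the quaternionic structure directly and makes the degree-$8$ identity transparent as $(u_1-u_2)^2=(u_1+u_2)^2-4u_1u_2$. Your care over the top class is well placed, though worth noting that even without identifying the real forms summand by summand one has $p_2(\eta )=c_4(\Cc\otimes_\Rr (\Rr\oplus\eta ))=c_4(\lambda^*\otimes\Lambda^2\zeta )$, which is how Lemma~\ref{5A} itself handles it, so the ordinary complex splitting principle would also suffice there.
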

\begin{proof}
Apply Lemma \ref{5A} to $\eta\oplus\Rr$. 
\end{proof}

\begin{prop}\label{X2}
{\rm (Crabb and Steer).}
Suppose that $w_2(M) =w_2(\xi )=\rho_2(l)$.
Then $\xi$ admits a $\Spinc (5)$-structure with characteristic class
$l$ if and only if the following conditions hold.
\begin{gather*}
w_6(\xi )=0;\qquad e(\xi )=0;\\
(\frac{1}{2}(p_2(\xi)-q_1^2(\xi;l))+
\frac{1}{2}(2q_1^2(\xi;\, l)-q_1(\xi;\, l)p_1(\tau M)+q_1(\xi;\, l)l^2))[M]
\equiv 0 \, \, \mod{8}.
\end{gather*}
\end{prop}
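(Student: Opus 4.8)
The plan is to realize the $\Spinc (5)$-structure concretely. Since the composite $\Spinc (5)\to\SO (5)\to\SO (8)$ factors through the standard inclusion $\SO (5)\subseteq\SO (8)$, a reduction of $\xi$ to $\Spinc (5)$ with characteristic class $l$ is the same as an isomorphism $\xi\cong\eta\oplus\Rr^3$ in which $\eta$ is a $5$-dimensional bundle carrying a $\Spinc (5)$-structure. Through the spinor representation of $\Spinc (5)=\Sp (2)\times_{\{\pm1\}}\U (1)$, such $\eta$ are precisely the bundles built from a $2$-dimensional $\Hl$-bundle $\zeta$ exactly as in Lemma \ref{X1}. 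So I would first reduce to the problem of finding a $2$-dimensional $\Hl$-bundle $\zeta$ over $M$ whose associated $\eta$ has $q_1(\eta;\, l)=q_1(\xi;\, l)$ and $p_2(\eta)=p_2(\xi)$, together with the single global condition $e(\xi)=0$. Since $e(\eta\oplus\Rr^3)=0$ automatically, Proposition \ref{3F} shows that these are exactly the conditions needed for $\xi\cong\eta\oplus\Rr^3$.

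First I would fix the Chern classes forced on $\zeta$. Matching $q_1$ through Lemma \ref{X1} gives $c_2(\zeta)=u:=l^2-q_1(\xi;\, l)$, and by Proposition \ref{8B} a bundle $\zeta$ with this $c_2$ exists over $M-B$ if and only if ${\rm Sq}^2\rho_2u=\rho_2(lu-l^3)$. I expect this to collapse to $w_6(\xi)=0$: from $\rho_2u=w_2^2(\xi)+w_4(\xi)$ and $\rho_2(lu-l^3)=w_2(\xi)w_4(\xi)$, the Cartan and Wu formulae rewrite the left-hand side as $w_3^2(\xi)+w_6(\xi)+w_2(\xi)w_4(\xi)$, and since $w_2(\xi)=\rho_2(l)$ makes $\xi$ spin${}^c$ we have $w_3(\xi)=0$, leaving just $w_6(\xi)=0$.

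Next I would match $p_2$. By Lemma \ref{X1}, $p_2(\eta)=q_1^2(\eta;\, l)-4c_4(\zeta)$, so $p_2(\eta)=p_2(\xi)$ forces $c_4(\zeta)=\tfrac14(q_1^2(\xi;\, l)-p_2(\xi))$, while Proposition \ref{8B} restricts the $c_4$ of any extension of $\zeta$ over $M$ to the coset $c_4[M]\equiv K\,(\mathrm{mod}\,12)$. Substituting $u=l^2-q_1(\xi;\, l)$ into the formula for $K$ should cause massive cancellation, leaving $K=\tfrac14(2q_1^2(\xi;\, l)-q_1(\xi;\, l)p_1(\tau M)+q_1(\xi;\, l)l^2)[M]$; then $c_4[M]-K$ equals $-\tfrac14$ times the integer
\begin{equation*}
B:=(q_1^2(\xi;\, l)+p_2(\xi)-q_1(\xi;\, l)p_1(\tau M)+q_1(\xi;\, l)l^2)[M].
\end{equation*}
Hence the existence of a matching $\eta$ over $M$ is equivalent to $\tfrac12 B\equiv 0\,(\mathrm{mod}\,24)$, whose left-hand side $\tfrac12 B$ is exactly the expression in the stated congruence.

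Finally I would strip off the $3$-primary part. As in the derivation of (\ref{4B}) and the index identity used in Remark \ref{11C}, the integrality of a suitable spin${}^c$ index difference (comparing $\Cc\otimes\xi$ with $\lambda^*\otimes\Lambda^2\zeta$) should give $B\equiv 0\,(\mathrm{mod}\,3)$ for free. Because $24=8\cdot 3$ with $\gcd(8,3)=1$, the congruence $\tfrac12 B\equiv 0\,(\mathrm{mod}\,24)$ then reduces to $\tfrac12 B\equiv 0\,(\mathrm{mod}\,8)$, which is the stated condition; combined with $w_6(\xi)=0$ and $e(\xi)=0$ this finishes the proof. The main obstacle is tracking the moduli correctly: it is the factor $4$ (rather than $2$) relating $c_4(\zeta)$ to $p_2$ through Lemma \ref{X1} that promotes the congruence from the $\mathrm{mod}\,4$ of Proposition \ref{9A} to $\mathrm{mod}\,8$ here, and one must check that the automatic $3$-primary vanishing is the \emph{only} simplification available, so that $\mathrm{mod}\,24$ collapses to $\mathrm{mod}\,8$ and no finer.
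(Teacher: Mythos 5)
Your proposal is correct and follows essentially the same route as the paper, whose proof consists precisely of the instruction to repeat the pattern of Proposition \ref{5B} with Proposition \ref{8B} in place of Proposition \ref{3E} and to dispose of the $3$-primary part via (\ref{4B}). Your explicit verifications --- the reduction of the ${\rm Sq}^2$ condition to $w_6(\xi)=0$, the evaluation of $K$ at $u=l^2-q_1(\xi;\,l)$, and the passage from the $(\text{mod}\ 24)$ congruence to $(\text{mod}\ 8)$ --- all check out.
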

\begin{proof} The proof follows the same pattern as that of 
Proposition \ref{5B}, only that instead of Proposition \ref{3E} we use Proposition
\ref{8B}. (If $e(\xi)=0$, the last congruence is always satisfied $\mod{3}$
by (\ref{4B}).) 
\end{proof}

Next we describe when a given $8$-dimensional vector bundle has a 
$3$-dimensional subbundle. To do so, we will need the following statement:

\begin{prop}\label{12C}
Let $w_2(M)=\rho_2(l)$. Consider cohomology classes $u\in H^4(M;\, \Zz )$
and $z\in H^8(M;\, \Zz)$. Then there is a $5$-dimensional vector bundle $\beta$
over $M$ with $w_2(\beta)=\rho_2(l)$, $q_1(\beta;l)=u$ and $p_2(\beta)=z$
if and only if
${\rm Sq}^2\rho_2(u)=\rho_2(lu)$ and there is a class $w\in H^8(M;\ \Zz)$
such that $4w=u^2-z$ and the integer
$$
w[M]\equiv \frac{1}{4}u(2u-p_1(\tau M) +l^2)[M]\quad \mod{12}.
$$
\end{prop}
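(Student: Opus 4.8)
The plan is to realize the desired $5$-dimensional bundles as the $\Spinc(5)$-bundles studied in Section 12 and then to transcribe Proposition \ref{8B}. A $5$-dimensional oriented bundle $\beta$ with $w_2(\beta)=\rho_2(l)$ carries a $\Spinc(5)$-structure with characteristic class $l$; since $\Spinc(5)=\Sp(2)\times_{\{\pm 1\}}\U(1)$, such a $\beta$ is precisely the bundle $\eta$ associated, as in Section 12, to a $2$-dimensional $\Hh_\lambda$-bundle $\zeta$ (with $c_1(\zeta)=2l$) via $\lambda^*\otimes_\Cc\Lambda^2\zeta=\Cc\otimes_\Rr(\Rr\oplus\eta)$, and conversely each such $\zeta$ yields a $\beta$. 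By Lemma \ref{X1} one has $q_1(\eta;\,l)=l^2-c_2(\zeta)$ and $p_2(\eta)=q_1^2(\eta;\,l)-4c_4(\zeta)$, so prescribing $q_1(\beta;\,l)=u$ and $p_2(\beta)=z$ amounts to prescribing
$$
c_2(\zeta)=l^2-u\qquad\text{and}\qquad c_4(\zeta)=w\ \text{ with }\ 4w=u^2-z .
$$
This already explains the appearance of the auxiliary class $w$: it must be the integral class $c_4(\zeta)$, and its existence is equivalent to $4\mid u^2-z$ in $H^8(M;\,\Zz)=\Zz$.

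For the implication $(\Leftarrow)$ I would, starting from $u$ and $w$ satisfying the hypotheses, use Proposition \ref{8B} to build a $2$-dimensional $\Hh_\lambda$-bundle $\zeta$ over $M$ with $c_2(\zeta)=l^2-u$ and $c_4(\zeta)=w$; the associated $\eta$ then has $w_2(\eta)=\rho_2(l)$, $q_1(\eta;\,l)=u$ and $p_2(\eta)=u^2-4w=z$, giving the required bundle $\beta=\eta$. For $(\Rightarrow)$ I would run the correspondence in reverse: a bundle $\beta$ with the stated classes arises from some $\zeta$ whose $c_2$ and $c_4$ are forced to be $l^2-u$ and $(u^2-z)/4$, and the conditions of Proposition \ref{8B} are then necessary for the existence of $\zeta$.

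The substance of the argument is the algebraic verification that the hypotheses of Proposition \ref{8B}, under the substitution $c_2(\zeta)=l^2-u$, reduce to those stated here. For the Steenrod operation, ${\rm Sq}^2\rho_2(l^2-u)=\rho_2\big(l(l^2-u)-l^3\big)$ simplifies using ${\rm Sq}^2\rho_2(l^2)=({\rm Sq}^1\rho_2(l))^2=0$ --- valid because $\rho_2(l)$ is the reduction of an integral class, so its Bockstein ${\rm Sq}^1\rho_2(l)$ vanishes --- to exactly ${\rm Sq}^2\rho_2(u)=\rho_2(lu)$. For the integer $K$ of Proposition \ref{8B} I would expand
$$
K=\tfrac14\big(p_1(\tau M)(l^2-u)+2(l^2-u)^2+l^2(3l^2-p_1(\tau M)-5(l^2-u))\big)[M],
$$
and note that the $l^4$- and $p_1(\tau M)l^2$-contributions cancel, leaving $K=\tfrac14 u(2u-p_1(\tau M)+l^2)[M]$; the extension congruence $w[M]\equiv K\ \mod{12}$ of Proposition \ref{8B} is then precisely the one claimed, and no separate $\mod 3$ argument is needed since both congruences are already modulo $12$.

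The step I expect to be the main obstacle is not this bookkeeping but the geometric identification underlying both directions, namely that every $5$-dimensional bundle with $w_2=\rho_2(l)$ is of the form $\eta$ for a $2$-dimensional $\Hh_\lambda$-bundle $\zeta$, and that the characteristic classes of $\beta$ determine $c_2(\zeta)$ and $c_4(\zeta)$ uniquely. This rests on the fact that a spin${}^c$-structure on a $5$-dimensional bundle is exactly a reduction to $\Spinc(5)=\Sp(2)\times_{\{\pm 1\}}\U(1)$, whose $\Sp(2)$-part furnishes $\zeta$; once this is in place the proof is a direct transcription of Proposition \ref{8B} via Lemma \ref{X1}.
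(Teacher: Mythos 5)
Your proposal is correct and follows essentially the paper's own route: the paper's proof is precisely "this follows from Lemma \ref{X1} and Proposition \ref{8B}", i.e.\ identify $\beta$ with the bundle $\eta$ associated to a $2$-dimensional $\Hh_\lambda$-bundle $\zeta$ via the $\Spinc(5)=\Sp(2)\times_{\{\pm1\}}\U(1)$ correspondence, read off $c_2(\zeta)=l^2-u$ and $4c_4(\zeta)=u^2-z$ from Lemma \ref{X1}, and transcribe the conditions of Proposition \ref{8B}. Your algebraic verifications (the vanishing of ${\rm Sq}^2\rho_2(l^2)$ and the simplification of $K$ to $\tfrac14u(2u-p_1(\tau M)+l^2)[M]$) are exactly the bookkeeping the paper leaves to the reader, and they check out.
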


\begin{proof}
This follows from Lemma \ref{X1}
and Proposition \ref{8B}.
\end{proof}

\begin{cor}\label{12D}
{\rm (\v Cadek, Van\v zura).}
Let $w_2(M)=\rho_2(l)$ and let $\xi$ satisfy $w_2(\xi )=0$. Then  $\xi$ has a 
$3$-dimensional subbundle $\alpha$ with characteristic classes 
$w_2(\alpha) =\rho_2(l)$ and $q_1(\alpha;l)=2u$
if and only if  the following conditions are satisfied.
\begin{gather*} e(\xi)=0; \qquad w_6(\xi)+w_4(\xi)\rho_2(l)+\rho_2(l^3)=0;
\qquad 
{\rm Sq}^2\rho_2u=\rho_2(lu);\\
\frac{1}{4}(u(p_1(\tau M)-2u-l^2))[M]\equiv 0 \, \, \mod{12};\\
\end{gather*}
\vskip-3\baselineskip
\begin{multline*}
\frac{1}{4}(p_2(\xi)+q_1^2(\xi)-q_1(\xi)p_1(\tau M)-3l^2q_1(\xi)+l^2p_1(\tau M)
+l^4\\
+u(20u+10l^2+2p_1(\tau M)-12q_1(\xi))[M]\equiv 0 \, \, \mod{4}.
\end{multline*} 
\end{cor}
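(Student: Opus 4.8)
The plan is to reduce the existence of a 3-dimensional subbundle $\alpha \subseteq \xi$ to the existence of a 5-dimensional complementary bundle, and then apply Proposition~\ref{12C}. First I would observe that, since $\xi$ is oriented and $8$-dimensional with $w_2(\xi)=0$, a $3$-dimensional subbundle $\alpha$ (with $w_2(\alpha)=\rho_2(l)$) gives an orthogonal splitting $\xi \cong \alpha \oplus \beta$, where $\beta$ is the orthogonal complement, a $5$-dimensional bundle. The additivity of the spin characteristic class and of the Pontrjagin classes lets me translate the data of $\alpha$ into constraints on $\beta$: from $w_2(\alpha)=\rho_2(l)$ and $q_1(\alpha;l)=2u$ together with $w_2(\xi)=0$ and the additivity relations I get $w_2(\beta)=\rho_2(l)$, and $q_1(\beta;l)=q_1(\xi)-q_1(\alpha;l)=q_1(\xi)-2u$. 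Note also that a $3$-dimensional $\alpha$ exists with the prescribed classes exactly when Proposition~\ref{12A} is satisfied, which accounts for the first $\mod{12}$ congruence and the Wu-type condition ${\rm Sq}^2\rho_2 u=\rho_2(lu)$.

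Next I would handle the splitting-versus-subbundle subtlety. The existence of an abstract $5$-dimensional $\beta$ with $\xi \cong \alpha \oplus \beta$ is governed by the stable classification of Section~3 together with the Euler class comparison (1.1). Because $\alpha$ is $3$-dimensional and $\beta$ is $5$-dimensional over an $8$-manifold, both lie above the stable range only after the top-cell analysis, so I expect $e(\xi)=0$ to be forced (the Euler class of a sum with a low-dimensional summand having a section), which gives the condition $e(\xi)=0$. The Stiefel--Whitney class $w_6(\xi)$ is computed from $w(\alpha)w(\beta)$; using $w_2(\alpha)=\rho_2(l)$, $w_4(\alpha)=0$ (as $\alpha$ is $3$-dimensional with $q_1(\alpha;l)=2u$, so $\rho_2 q_1(\alpha;l)=w_4(\alpha)=0$), and the Wu formula, I would derive $w_6(\xi)+w_4(\xi)\rho_2(l)+\rho_2(l^3)=0$ as the mod-$2$ obstruction to the decomposition.

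The main computation is then to feed the translated classes $q_1(\beta;l)=q_1(\xi)-2u$ and $p_2(\beta)$ into Proposition~\ref{12C}. Here $p_2(\beta)$ is determined by $p_2(\xi)$, $p_1(\alpha)$, $p_1(\beta)$ and the product $p_1(\alpha)p_1(\beta)$ via $p_2(\xi)=p_2(\alpha)+p_1(\alpha)p_1(\beta)+p_2(\beta)$; since $\alpha$ is $3$-dimensional, $p_2(\alpha)=0$ and $p_1(\alpha)=2q_1(\alpha;l)-\text{(correction in }l)$, so $p_2(\beta)$ becomes an explicit polynomial in $p_2(\xi),q_1(\xi),u,l$. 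Proposition~\ref{12C} then requires a class $w$ with $4w=q_1(\beta;l)^2-p_2(\beta)$ together with a $\mod{12}$ congruence for $w[M]$. I would substitute the expressions for $q_1(\beta;l)$ and $p_2(\beta)$ and simplify to obtain the final displayed $\mod{4}$ congruence; the passage from $\mod{12}$ to $\mod{4}$ uses that the $\mod{3}$ part is automatic, as already exploited via (\ref{4B}) in the earlier propositions.

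The hard part will be the bookkeeping in this last substitution: expanding $q_1(\beta;l)^2-p_2(\beta)$ in terms of $p_2(\xi),q_1(\xi),u,l,p_1(\tau M)$ and checking that the divisibility by $4$ and the $\mod{12}$ congruence of Proposition~\ref{12C} collapse precisely to the stated $\mod{4}$ congruence. I expect the $\mod{3}$ reduction (via integrality of an appropriate spin${}^c$ index, as in Remark~\ref{11C} and (\ref{4B})) to be the step requiring the most care, since it is what licenses weakening the congruence from $\mod{12}$ to $\mod{4}$ while keeping the separate $\mod{12}$ condition coming from the existence of $\alpha$ itself.
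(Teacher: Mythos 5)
Your overall strategy coincides with the paper's: produce $\alpha$ from Proposition \ref{12A}, reduce the remaining problem to the existence of a $5$-dimensional complement $\beta$ via Proposition \ref{12C}, and assemble using the Euler class and the stable classification. The genuine gap is in the translation of $q_1$ to the complement: the class $q_1(\,\cdot\,;l)$ is \emph{not} additive in the naive way you use. By definition $q_1(\alpha;l)=q_1(\alpha-\lambda)$, so for $\xi=\alpha\oplus\beta$ one has, stably, $\xi=(\alpha-\lambda)+(\beta-\lambda^{-1})+(\lambda\oplus\lambda^{-1})$, and since $q_1(\lambda\oplus\lambda^{-1})=-c_2(\lambda\oplus\lambda^{-1})=l^2$ this gives
\[
q_1(\xi)=q_1(\alpha;l)+q_1(\beta;-l)+l^2 ,
\]
hence $q_1(\beta;-l)=q_1(\xi)-2u-l^2$ (and $q_1(\beta;l)=q_1(\beta;-l)$ here), not $q_1(\xi)-2u$ as you assert. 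Your formula is already inconsistent with $p_1$-additivity: it would give $p_1(\alpha)+p_1(\beta)=(4u+l^2)+(2q_1(\xi)-4u+l^2)=p_1(\xi)+2l^2$.

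This missing $-l^2$ propagates through the rest of the argument: feeding $q_1(\xi)-2u$ into Proposition \ref{12C} produces the mod $2$ condition $w_6(\xi)=w_4(\xi)\rho_2(l)$ without the $\rho_2(l^3)$ term, and a final mod $4$ congruence that does not reduce to the one stated, so the bookkeeping you defer to the end cannot close as written. With the corrected value one also gets $p_2(\beta)=p_2(\xi)-p_1(\alpha)p_1(\beta)=p_2(\xi)-(4u+l^2)(2q_1(\xi)-4u-l^2)$, which is exactly what the paper substitutes into Proposition \ref{12C}. The remainder of your outline --- $e(\xi)=0$ being forced by the existence of a nowhere-zero section, the Wu-formula origin of the $w_6$ condition, and the mod $3$ part of the congruence being automatic by the integrality of a spin${}^c$ index as in (\ref{4B}) --- is sound and agrees with the paper's proof.
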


\begin{proof} The conditions on $u$ ensure the existence of a  3-dimensional 
vector bundle $\alpha$ with $q_1(\alpha;l)=2u$. It remains
to show that the other conditions ensure the existence of a 5-dimensional 
vector bundle $\beta$ with $w_2(\beta)=\rho_2(l)$, $q_1(\beta;-l)= q_1(\xi)-
q_1(\alpha;l)-l^2=q_1(\xi)-2u-l^2$ and $p_2(\beta)=p_2(\xi)-
p_1(\beta)p_1(\alpha)=p_2(\xi)-(4u+l^2)(2q_1(\xi)-4u-l^2)$. This can be done 
using the previous proposition. 
\end{proof}
\begin{remark}
In [4] a certain automorphism $\varphi$ of the group $\Spin (8)$ was found such 
that an 8-dimensional vector bundle $\xi$ with the spin structure 
$\tilde\varphi$ admits reduction to $\Sp (2)\times_{\{\pm 1\}}\Sp (1)$ 
if and only 
if a vector bundle $\eta$ with spin structure $\bar\eta=\varphi^*(\bar\xi)$
has a 3-dimensional subbundle ([4], Theorem 3.2). Moreover, $\varphi^*$ in 
the integer cohomology of $B\Spin (8)$ was computed as
$$
\varphi^*(q_1)=q_1;\qquad \varphi^*(e)=-q_2;\qquad \varphi^*(q_2)=-e.
$$
Here $q_2$ is uniquely defined by the equation $p_2=q_1^2+2e+4q_2$. This relates
reductions to $\Sp (2)\times_{\{\pm 1\}}\Sp (1)$ with reductions
to $\Spin (5)\times_{\{\pm 1\}}\Spin (3)$ and allows us to deduce 
Proposition \ref{13C} from Corrolary \ref{12D}.
\end{remark}

\end{document}